\newcommand{\bigwedgem}[1]{\mathord{\raisebox{2pt}
{\hbox{$\scriptstyle{\bigwedge^{\!#1}}$}}}}
\newcommand\blank{\mathord{\hbox to 1.5ex{\hrulefill}}\,}
\newtheorem{theorem}{Theorem}
\newtheorem{lemma}[theorem]{Lemma}
\newtheorem{proposition}[theorem]{Proposition}
\newtheorem{corollary}[theorem]{Corollary}
\theoremstyle{definition}
\newtheorem{remark}[theorem]{Remark}
\theoremstyle{definition}
\theoremstyle{definition}
\newtheorem{definition}[theorem]{Definition}
\theoremstyle{definition}
\newtheorem{claim}[theorem]{Claim}
\newtheorem{observation}[theorem]{Observation}
\def\<{\vartriangleleft}
\def\>{\vartriangleright}
\def\XX{{\bf X}}
\def\YY{{\bf Y}}
\def\ZZZ{{\bf Z}}
\def\XXX{\textbf{X}_{\infty}}
\def\HHPf{\mathbf{HPf}}
\def\PPf{\mathbf{Pf}}
\def\HPf{\mathrm{HPf}}
\def\Pf{\mathrm{Pf}}
\def\hpf{\mathrm{hpf}}
\def\pf{\mathrm{pf}}
\def\GGL{\mathrm{GL}_{\infty}}
\def\SI{\bigwedgem{\centerdot}}
\def\SIVar{\bigwedgem{\centerdot}\mathrm{Var}_{\mathbb{K}}}
\def\Wedge{\bigwedgem{\frac{\infty}{2}}V_{\infty}}
\def\WedgeD{\bigwedgem{\frac{\infty}{2}}V_{\infty}^{*}}
\def\WedgeDU{(\bigwedgem{\frac{\infty}{2}}V_{\infty})^{*}}
\def\Wedgegood{(\bigwedgem{\frac{\infty}{2}}V_{\infty})^{*}_{\text{good}}}
\def\GL{\mathrm{GL}}
\def\Gr{\mathrm{Gr}}
\def\GGr{{\bf Gr}}
\def\SGr{\mathcal{S}\mathrm{Gr}}
\def\NN{\mathbb{N}}
\def\ZZ{\mathbb{Z}}
\def\ZZO{\mathbb{Z}_{\geqslant 0}}
\def\ZZX{\mathbb{Z}^{\times}}
\def\sgn{\mathrm{sgn}}
\def\Sym{\mathrm{Sym}}
\def\KK{\mathbb{K}}
\def\Plu{\mathrm{PluVar}_{\mathbb{K}}}
\def\Var{\mathrm{Var}_{\mathbb{K}}}
\def\Vec{\mathrm{Vect}_{\mathbb{K}}}
\def\rk{\mathrm{rk}}
\def\id{\mathrm{id}}
\def\AA{\mathcal{A}}
\def\textblue<#1>{\textbf{\textcolor{blue}{#1}}}
\title{Dual Infinite Wedge is $\mathrm{GL}_{\infty}$-equivariantly noetherian}
\author{Ilia Nekrasov}
\subjclass[2010]{13E05,15A75,14M15}
\begin{document}

\sloppy 
\maketitle

\begin{abstract}
We prove the (equivariant) noetherian property for a wide class of varieties generalizing the class of Pl\"ucker varieties (Theorem \ref{Thm:MainThm}). It improves previous results of Draisma--Eggermont who treated the case of bounded Pl\"ucker varieties. Key ingredient of our proof is the constructive proof of the equivariant noetherianity for the hyper-Pfaffians (Theorem \ref{thm:HPfNoeth}) which implies the equivariant noetherianity of the dual infinite wedge. 
\end{abstract}


\section{Introduction}

A Pl\"ucker variety is a rule $\XX$ that assigns to a pair of a nonnegative number $p$ and a vector space $V$ an algebraic variety $\XX_{p}(V)$. These varieties are required to satisfy some compatibility properties (see Definition \ref{def:PluVar}); for instance, for every $p$ the assignment $\XX_{p}(\_)$ is a functor on a category of vector spaces. 

Pl\"ucker varieties were introduced by Draisma and Eggermont \cite{DraEgger}. These varieties give a powerful tool for proving existence of uniform bounds for degrees of equations for varieties $\XX_{p}(V)$. The most important implications of the work of Draisma--Eggermont include uniform bounds for degrees of the equations for secant and tangential varieties of Grassmannians. 

More generally, their noetherianity result \cite[Theorem 1]{DraEgger} implies that the degrees of equations for $\XX_{p}(V)$ are bounded (for all $p$ and $V$) for the so-called bounded Pl\"ucker varieties $\XX$ (Definition \ref{def:bddPluVar}). The boundedness condition for Pl\"ucker varieties originates in the work of Snowden on noetherianity for $\Delta$-modules \cite{SnowDelta}: $\Delta$-modules are ideals of equations for symmetric power counterparts of Pl\"ucker varieties.

The paper \cite{DraEgger} raises the natural question whether the same noetherianity result holds for unbounded Pl\"ucker varieties. Theorem \ref{Thm:MainThm}, our first main result, answers affirmatively to this question. 

Instead of working with general Pl\"ucker varieties, we introduce a notion of a $\SI$-variety (Definition \ref{def:SIVar}), which generalizes and at the same time simplifies the notion of a Pl\"ucker variety. Our main result (Theorem \ref{Thm:MainThm}) states the topological noetherianity result for the class of $\SI$-varieties.

The algebraic noetherianity result (a direct analog of the result for $\Delta$-modules) for $\SI$-varieties does not hold. It fails even for the Grassmannian $\SI$-variety $\GGr$: there are infinitely many (combinatorially different) types of Pl\"ucker relations. The paper of R. Laudone \cite{Laud18} follows this direction.

Our work fits in the broader context of noetherianity results for large algebraic structures, e.g.,  \cite{DraKutt, CEF, SamSnowGro, SamSymmIdeals}. Namely, the topological noetherianity for algebraic representations of infinite classical groups is proven (originally for $\GGL$ in \cite{DraismaPoly}, and for other groups in \cite{EggerSnowInfGrp}). The dual (unrestricted) infinite wedge is a nontrivial inverse limit of algebraic $\GGL$-representations. It is one of the most interesting and used in mathematics literature space among non-algebraic representations of $\GGL$. Theorem \ref{thm:WedgeNoeth}, our second main result, states the $\GGL$-equivariant noetherianity of the infinite wedge.



After completing this work, we learned that A.~Bik, J.~Draisma, and R.~Eggermont had obtained similar results in unpublished work. In a forthcoming paper with A.~Bik, J.~Draisma, and R.~Eggermont we consider applications of the main results of this paper to sequences of varieties with contraction morphisms only.

\subsection{Main results} The following theorem is our first main result.
\begin{theorem}\label{Thm:MainThm}
For any $\SI$-variety ${\bf X}$ there exists a $p_0 \in \mathbb{Z}_{\geqslant 0}$ and a finite dimensional vector space $V_{0}$ such that $\XX$ is defined set-theoretically by pullbacks of equations for $\XX_{p_0}(V_0)$. In particular, the degrees of equations defining the varieties $\XX_{p}(V)$ are bounded. 
\end{theorem}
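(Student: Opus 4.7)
The natural strategy is to reduce Theorem~\ref{Thm:MainThm} to the $\GGL$-equivariant noetherianity of the dual infinite wedge (Theorem~\ref{thm:WedgeNoeth}). To a $\SI$-variety $\XX$ I would attach a limit object $\XXX$ sitting as a $\GGL$-stable closed subset of an appropriate ambient space built from $\WedgeDU$; the functoriality of $\XX$ in $V$ together with the compatibility between different indices $p$ are exactly what guarantees that the collection $\{\XX_{p}(V)\}$ assembles into such a limit, and conversely that any $\GGL$-stable closed subset of the ambient space comes from a $\SI$-variety. This dictionary is the first technical point to pin down.

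Once $\XX \leftrightarrow \XXX$ is set up, the argument is essentially formal. By Theorem~\ref{thm:WedgeNoeth} (equivariant noetherianity of the dual infinite wedge), every descending chain of $\GGL$-stable closed subsets of the ambient space stabilizes; in particular $\XXX$ is cut out set-theoretically by finitely many $\GGL$-orbits of equations. The ambient space is an inverse limit (over $p$ and $\dim V$) of the finite-dimensional pieces $\bigwedgem{p}V_d^{*}$, so after increasing $p$ and $d$ if necessary, those finitely many equations all descend to a single finite level $\bigwedgem{p_0}V_0^{*}$ where they are pullbacks of equations for $\XX_{p_0}(V_0)$.

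Finally, for arbitrary $p$ and arbitrary $V$, the structure maps of the $\SI$-variety (functoriality in $V$ plus the wedging/contraction morphisms relating different $p$'s) provide natural morphisms $\XX_{p}(V) \to \XX_{p_0}(V_0)$ at the level of ambient spaces after passing to a suitable quotient or subspace of $V$; set-theoretic pullback of the finitely many chosen equations along these morphisms yields a set of defining equations for $\XX_{p}(V)$. Because these equations all live in the fixed finite-dimensional space $\bigwedgem{p_0}V_0^{*}$, their degrees are bounded by a constant independent of $p$ and $V$, giving the second assertion.

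The main obstacle I expect is the first step, namely making the correspondence between $\SI$-varieties and $\GGL$-stable closed subsets of (a relative of) $\WedgeDU$ sufficiently tight: one needs an essentially surjective dictionary so that finitely many defining equations \emph{upstairs} genuinely suffice to cut out each $\XX_{p}(V)$ \emph{downstairs}, and not merely some larger variety containing it. This requires understanding both that the projections from the inverse limit to each finite piece $\bigwedgem{p}V^{*}$ are surjective on the relevant orbit closures, and that passing to the set-theoretic limit does not lose information beyond what is controlled by the $\GGL$-action.
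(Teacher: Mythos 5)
Your plan coincides with the paper's own deduction of Theorem~\ref{Thm:MainThm}: the paper forms the limiting variety $\XXX = \varprojlim_{n,p}\XX_{n,p} \subseteq \WedgeDU$, places it inside a $\GGL$-noetherian space (a two-sided hyper-Pfaffian, which is also how Theorem~\ref{thm:WedgeNoeth} itself is proved), extracts finitely many $\GGL$-orbits of defining equations, and descends them to a single finite level $(n_0,p_0)$ exactly as you describe. The "tight dictionary" you flag as the main obstacle is in fact supplied by axiom (ii) of Definition~\ref{def:SIVar}: the maps $i^{\dagger}_{n,p}$ and $j^{\dagger}_{n,p}$ are surjective on components, so every projection $\XXX \to \XX_{n,p}$ is onto and nothing is lost in passing to the limit.
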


The crucial statement for the proof is our second main result:

\begin{theorem} \label{thm:WedgeNoeth}
The dual unrestricted infinite wedge $\WedgeDU$ is $\GGL$-equivariant noetherian. Unfolding, any descending chain of closed $\GGL$-invariant subsets of $\WedgeDU$ stabilizes.
\end{theorem}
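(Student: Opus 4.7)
The plan is to deduce Theorem \ref{thm:WedgeNoeth} from Theorem \ref{thm:HPfNoeth} by realizing $\WedgeDU$ as a $\GGL$-equivariant inverse limit of spaces directly tied to the hyper-Pfaffian varieties. First I would fix the standard basis $\{e_i\}_{i\in\ZZ}$ of $V_\infty$ and the vacuum $\Omega=e_{-1}\wedge e_{-2}\wedge\cdots$, so that each semi-infinite monomial is specified by its finite deviation from $\Omega$. This gives a canonical exhaustion $\Wedge=\bigcup_p F_p$ by finite-dimensional subspaces $F_p$ indexed by the number of excitations. The coordinate ring of $\WedgeDU$ is then
\[
\mathrm{Sym}^{*}(\Wedge)\;=\;\varinjlim_{p}\mathrm{Sym}^{*}(F_p),
\]
and closed $\GGL$-invariant subsets of $\WedgeDU$ correspond to $\GGL$-invariant ideals in this colimit.

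Next I would identify each finite layer $F_p$, after embedding in an auxiliary finite wedge $\bigwedgem{p+N}V$, with a subspace that is cut out (set-theoretically) by hyper-Pfaffian relations. This embeds the finite-level Pl\"ucker theory of $\Wedge$ into the hyper-Pfaffian coordinate rings, so that any $\GGL$-invariant ideal $I\subseteq\mathrm{Sym}^{*}(\Wedge)$ restricts at level $p$ to a $\GL$-invariant ideal in the coordinate ring of some hyper-Pfaffian variety $\HHPf$, to which Theorem \ref{thm:HPfNoeth} applies. Given a descending chain $Z_1\supseteq Z_2\supseteq\cdots$ of closed $\GGL$-invariant subsets of $\WedgeDU$ with defining ideals $I_1\subseteq I_2\subseteq\cdots$, I would set $I_k^{(p)}:=I_k\cap\mathrm{Sym}^{*}(F_p)$ and invoke hyper-Pfaffian noetherianity to stabilize each chain $\{I_k^{(p)}\}_k$ at some index $k_0(p)$.

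The crucial and technically hardest step is to convert the per-level stabilization into a single stabilization index $k_0$ for the colimit chain $\{I_k\}$: without a uniform bound on $k_0(p)$ the colimit chain may fail to stabilize even if each finite-level chain does. This is where the \emph{constructive} nature of Theorem \ref{thm:HPfNoeth}, emphasized in the introduction, enters: the explicit generators produced by the construction should behave compatibly with the inclusions $F_p\hookrightarrow F_{p+1}$, yielding a uniform bound on $k_0(p)$ for $p$ sufficiently large and hence stabilization of the original chain in $\WedgeDU$. Making this compatibility precise, and thereby transferring the equivariant noetherianity from the ind-hyper-Pfaffian $\HHPf$ to the pro-object $\WedgeDU$, is the main obstacle I anticipate.
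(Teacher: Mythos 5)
There is a genuine gap, and you have in fact located it yourself: your argument reduces to stabilizing a chain of ideals level by level and then asserting that the stabilization indices $k_0(p)$ can be made uniform, but you give no mechanism for this uniformization, and "the constructive nature of Theorem \ref{thm:HPfNoeth}" is not by itself such a mechanism. A colimit of noetherian objects is not noetherian in general, so this step is exactly where the content of the theorem lies; leaving it as "the main obstacle I anticipate" means the proof is not complete. There is also a smaller but real problem earlier: the finite layers $F_p$ of $\Wedge$ are just linear subspaces, and they are not naturally "cut out by hyper-Pfaffian relations," so the proposed identification of each finite level with a hyper-Pfaffian variety does not get off the ground as stated.

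The paper avoids the uniformization problem entirely by running the reduction in the opposite direction. Given a descending chain $Z_1 \supseteq Z_2 \supseteq \cdots$ of closed $\GGL$-stable subsets of $\WedgeDU$, either every $Z_k$ is the whole space (and the chain is constant), or some $Z_k$ is proper. Theorem \ref{thm:InclToHPf} then shows that this single proper $Z_k$ is contained in one fixed two-sided hyper-Pfaffian limiting variety $\HHPf^{(m,l),(r,s)}_{\infty}$: the key input is the classification of $\GL$-stable subvarieties of exterior powers (Proposition \ref{prop:Orbits}) together with the observation that wedging with $e_{P+1}\wedge e_{P+2}$ forces an odd partition type, whence bounded nilpotency degree and containment in a hyper-Pfaffian (Proposition \ref{OddPartition} and Corollary \ref{GeoHPf}). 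The tail of the chain then lives inside the single $\GGL$-noetherian space $\HHPf^{(m,l),(r,s)}_{\infty}$ of Theorem \ref{thm:HPfNoeth}, whose proof (decomposition via Lemma \ref{lemma:filtration}, the subgroup $H$, the good subspace, and the injective projection) is a direct induction on $(l,s)$ and never needs a level-by-level passage through $\Wedge$. If you want to salvage your outline, the missing ingredient is precisely this containment of an arbitrary proper invariant closed subset in one hyper-Pfaffian, which replaces your uniform bound on $k_0(p)$.
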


\subsection{Plan of the proof}

Following the logic of \cite{DraEgger}, we prove Theorem \ref{Thm:MainThm} via the following steps:

\begin{enumerate}
    \item We define finite dimensional hyper-Pfaffian varieties $\HPf^{(m,l)}$ and their duals $\HPf^{(r,s), \star}$. Collecting these varieties, we form the Pl\"ucker varieties $\HHPf^{(m,l)}$, $\HHPf^{(r,s), \star}$, and the limiting forms for them $\HHPf^{(m,l)}_{\infty}$, $\HHPf^{(r,s), \star}_{\infty}$. Taking into account both constructions, we build up a two-sided hyper-Pfaffian Pl\"ucker variety $\HHPf^{(m,k), (r,s)}$ and its limiting form $\HHPf^{(m,k), (r,s)}_{\infty}$.
    
    \item We prove (Theorem \ref{thm:InclToHPf}) that for any proper Plücker variety $\XX$ there exist pairs $(m,l),(r,s)$ such that $$\XXX \subseteq \HHPf^{(m,l), (r,s)}_{\infty}.$$
    
    \item We prove (Theorem \ref{thm:HPfNoeth}) that for any pairs $(m,l)$, $(r,s)$ the limiting variety $\HHPf^{(m,l), (r,s)}_{\infty}$ is $\GGL$-noetherian. 
    
    \item Finally, Theorem \ref{thm:WedgeNoeth} follows from Theorem \ref{thm:InclToHPf}, Proposition \ref{prop:filtration}, and Theorem \ref{thm:HPfNoeth}; Theorem \ref{Thm:MainThm} follows from this result immediately (Corrolary \ref{cor:MainThm}).
\end{enumerate}

\subsection{Notation and conventions}


In what follows, we work over a fixed field $\KK$ of characteristic 0. 
Besides algebraic varieties, more often than not we consider affine cones over projective varieties. For instance, by the Grassmannian $\Gr(2,4)$ we mean the affine cone over the actual projective variety inside the vector space $\bigwedgem{2}\KK^{4}$. 


The vector space generated by set of vectors $\{e_{i}\}_{i \in I}$ we denote by $\langle e_{i} \rangle_{i\in I}$ or by $\KK^{I}$ when we want to emphasize the generating set $I$ only. Also, for a vector space $V$ we denote by $[V]$ the given set of basis vectors, when this makes sense, e.g., $[\KK^{n}] = [n]$ and, more generally, $[\KK^{I}] = I$.




By $\ZZX$ we denote the set $\ZZ\backslash\{0\}$. For pairs of natural numbers we use the following partial order: $(n,p) \< (N,P)$ if and only if $n<N$, $p \leqslant P$ or $n \leqslant N$, $p < P$.

\subsection*{Acknowledgements.} The author thanks Andrew Snowden for extremely useful discussions and countless reviews of the text.

\section{\texorpdfstring{$\SI$}{TEXT}-varieties and equivariant noetherianity}

\subsection{Infinite wedge}
Consider for any $n, p \in \ZZO$ a set $[n,p] = \{-n,\dots-1, 1, \dots, p\}$ and a vector space $V_{n, p} = \KK^{[n,p]} = \langle e_{-n},\dots, e_{-1}, e_{1}, \dots e_{p} \rangle$ of dimension $n+p$. Then by $V_{\infty}$ we denote the direct limit of $V_{n, p}$ with natural inclusions:
$$V_{\infty} := \varinjlim_{n, p} V_{n,p} = \langle \dots, e_{-2}, e_{-1}, e_{1}, e_{2}, \dots\rangle_{\KK}.$$
Let $\langle\,, \rangle:\;V_{\infty} \times V_{\infty} \rightarrow \KK$ be a bilinear form on $V_{\infty}$ given by $\langle e_{i}, e_{j} \rangle = \delta_{-i, j}$. The restrictions of the form $\langle\,, \rangle$ to the spaces $V_{n,p}$ identify the dual space $V_{n,p}^*$ with $V_{p,n}$.


Next, we consider the exterior powers $\bigwedgem{p}V_{n,p}$. We denote the basis vectors for this space by $e_{I} = e_{i_{1}}\wedge\dots \wedge  e_{i_{p}}$ where $I = \{ i_1 < \dots < i_{p}\} \subset [n,p]$. The following maps between the exterior powers are of a particular interest for us:
\begin{itemize}
    \item $i_{n, p}: \bigwedgem{p}V_{n,p} \hookrightarrow \bigwedgem{p}V_{n+1,p}$ is induced by a natural inclusion $V_{n,p} \hookrightarrow V_{n+1,p}$ ;
    
    \item $j_{n,p}: \bigwedgem{p}V_{n,p} \hookrightarrow \bigwedgem{p+1}V_{n,p+1}$ is multiplication by $e_{p+1}$, i.e., $j_{n,p}(\omega) = \omega \wedge e_{p+1}$;
    
    \item $i_{n, p}^{\dagger}: \bigwedgem{p}V_{n+1,p} \twoheadrightarrow \bigwedgem{p}V_{n,p}$ is the dual map to $j_{p,n}$, i.e., $i_{n,p}^{\dagger} = (j_{p, n}^*)$;
    
    \item $j_{n,p}^{\dagger}: \bigwedgem{p+1}V_{n,p+1} \twoheadrightarrow \bigwedgem{p}V_{n,p}$ is the dual map to $i_{p,n}$.
\end{itemize}
Explicit formulas for the maps $i_{n, p}^{\dagger}$ and $j_{n,p}^{\dagger}$ are presented in the proof of Lemma \ref{SetTheoretical42}.


We note that $j_{n+1, p} \circ i_{n,p} = i_{n, p+1} \circ j_{n,p}$ and $j_{n, p}^{\dagger} \circ i_{n,p+1}^{\dagger} = i_{n, p}^{\dagger} \circ j_{n+1,p}^{\dagger}$ or, reformulating, the two diagrams -- the first one with $i_{n, p}$ and $j_{n,p}$ maps, and the second one with the corresponding $\dagger$-maps -- are commutative, see Fig.\ref{dia:Main}. 

\begin{figure}
    \centering
\begin{tikzcd}
  \bigwedgem{0}V_{0,0} \arrow[dd, hook, xshift=-2.3ex, "i_{0,0}" description] \arrow[dd, twoheadleftarrow, xshift=1.3ex, "i_{0,0}^{\dagger}" description] \arrow[rr, hook, yshift=1.3ex, "j_{0,0}" description] \arrow[rr, twoheadleftarrow, yshift=-1.3ex, "j_{0,0}^{\dagger}" description] & & \bigwedgem{1}V_{0,1} \arrow[dd, hook, xshift=-2.3ex, "i_{0,1}" description] \arrow[dd, twoheadleftarrow, xshift=1.3ex, "i_{0,1}^{\dagger}" description] \arrow[rr, hook, yshift=1.3ex, "j_{0,1}" description] \arrow[rr, twoheadleftarrow, yshift=-1.3ex, "j_{0,1}^{\dagger}" description]  & & \bigwedgem{2}V_{0,2} \arrow[dd, hook, xshift=-2.3ex, "i_{0,2}" description] \arrow[dd, twoheadleftarrow, xshift=1.3ex, "i_{0,2}^{\dagger}" description] \arrow[rr, hook, yshift=1.3ex, "j_{0,2}" description] \arrow[rr, twoheadleftarrow, yshift=-1.3ex, "j_{0,2}^{\dagger}" description]& & \dots \\
                                                         & &                                                         & &                      & &       \\
  \bigwedgem{0}V_{1,0} \arrow[dd, hook, xshift=-2.3ex, "i_{1,0}" description] \arrow[dd, twoheadleftarrow, xshift=1.3ex, "i_{1,0}^{\dagger}" description] \arrow[rr, hook, yshift=1.3ex, "j_{1,0}" description] \arrow[rr, twoheadleftarrow, yshift=-1.3ex, "j_{1,0}^{\dagger}" description] & & \bigwedgem{1}V_{1,1} \arrow[dd, hook, xshift=-2.3ex, "i_{1,1}" description] \arrow[dd, twoheadleftarrow, xshift=1.3ex, "i_{1,1}^{\dagger}" description] \arrow[rr, hook, yshift=1.3ex, "j_{1,1}" description] \arrow[rr, twoheadleftarrow, yshift=-1.3ex, "j_{1,1}^{\dagger}" description]  & & \bigwedgem{2}V_{1,2} \arrow[dd, hook, xshift=-2.3ex, "i_{1,2}" description] \arrow[dd, twoheadleftarrow, xshift=1.3ex, "i_{1,2}^{\dagger}" description] \arrow[rr, hook, yshift=1.3ex, "j_{1,2}" description] \arrow[rr, twoheadleftarrow, yshift=-1.3ex, "j_{1,2}^{\dagger}" description]& & \dots \\
                                                         & &                                                         & &                      & &       \\
  \bigwedgem{0}V_{2,0} \arrow[dd, hook, xshift=-2.3ex, "i_{2,0}" description] \arrow[dd, twoheadleftarrow, xshift=1.3ex, "i_{2,0}^{\dagger}" description] \arrow[rr, hook, yshift=1.3ex, "j_{2,0}" description] \arrow[rr, twoheadleftarrow, yshift=-1.3ex, "j_{2,0}^{\dagger}" description] & & \bigwedgem{1}V_{2,1} \arrow[dd, hook, xshift=-2.3ex, "i_{2,1}" description] \arrow[dd, twoheadleftarrow, xshift=1.3ex, "i_{2,1}^{\dagger}" description] \arrow[rr, hook, yshift=1.3ex, "j_{2,1}" description] \arrow[rr, twoheadleftarrow, yshift=-1.3ex, "j_{2,1}^{\dagger}" description]  & & \bigwedgem{2}V_{2,2} \arrow[dd, hook, xshift=-2.3ex, "i_{2,2}" description] \arrow[dd, twoheadleftarrow, xshift=1.3ex, "i_{2,2}^{\dagger}" description] \arrow[rr, hook, yshift=1.3ex, "j_{2,2}" description] \arrow[rr, twoheadleftarrow, yshift=-1.3ex, "j_{2,2}^{\dagger}" description]& & \dots \\
                                                         & &                                                         & &                      & &       \\
  \vdots  & & \vdots & & \vdots \\
\end{tikzcd}
    \caption{Two commutative diagrams: the one with $i$- and $j$-maps, and the one with $i^{\dagger}$- and $j^{\dagger}$-maps}
    \label{dia:Main}
\end{figure}

\begin{definition}
The \textit{infinite wedge} $\Wedge$ is the direct limit of the spaces $\bigwedgem{p}V_{n,p}$ with respect to the transition maps $i_{n, p}$ and $j_{n,p}$:
$$\Wedge := \varinjlim_{n,p} \bigwedgem{p}V_{n,p}.$$
Analogously, we define the (restricted) \textit{dual infinite wedge} 
$$\WedgeD := \varinjlim\limits_{n,p} \bigwedgem{p}V_{n,p}^* = \varinjlim\limits_{n,p} \bigwedgem{n}V_{p, n}.$$
It is isomorphic to the infinite wedge $\Wedge$.

The \textit{unrestricted dual infinite wedge} $\WedgeDU$ is an uncountable dimensional vector space that is defined with the inverse limit of the spaces $\bigwedgem{p}V_{n,p}$ with respect to the transition maps $i_{n,p}^{\dagger}$ and $j_{n,p}^{\dagger}$:
$$\WedgeDU := \varprojlim\limits_{n, p} \bigwedgem{p}V_{n,p}.$$
\end{definition}
\begin{remark}
The space $\Wedge$ was introduced in mathematical physics by Jimbo et al. \cite{SatoSato82}. We call it the infinite wedge following \cite{DraEgger}, but this space has many other names: semi-infinite wedge \cite{Stern95}, half-infinite wedge \cite{Rios-Zertuche13}, charge-0 fermionic Fock space \cite{MuthiahWeekesYacobi18} and others.
\end{remark}

By $\GL_{n,p}$ and $\GGL$ we denote the groups $\GL(V_{n,p})$ and $\bigcup_{n, p \in \ZZO} \mathrm{GL}(V_{n, p})$ respectively. This group acts on the spaces $V_{\infty}$, $\Wedge$, $\WedgeD$, and $\WedgeDU$ combining actions of $\GL_{n,p}$ on $V_{n,p}$ for all $n,p$.

From the definition we can see that basis elements of the space $\Wedge$ have the form 
$$e_{i_{1}, i_{2}, \dots}:= e_{i_{1}} \wedge e_{i_{2}} \wedge \dots \text{ with }i_{k}=k\text{ for }k\gg 0.$$
The space $\Wedge$ is a proper subspace of $\WedgeDU$. Indeed, for any element $w = (w_{n,p})$ of the former space, because of the identity $i^{\dagger}_{n,p}\circ i_{n,p} = j^{\dagger}_{n,p} \circ j_{n,p} = \id_{V_{n,p}}$, the same set $(w_{n, p})$ is an element of the latter space. However, due to the dimension count, the spaces are not equal. For instance, the vector $v = (v_{n,p}) \in \WedgeDU$ with $v_{n,p} = (e_{1}+e_{2})\wedge \dots \wedge(e_{p-1}+e_{p})\wedge e_{p}$ for any $n\geqslant 0$ do not belong to the space $\Wedge$:
$$v = (e_{1}+e_{2}) \wedge (e_{2}+e_{3}) \wedge (e_{3}+e_{4}) \wedge \dots \in \WedgeDU\backslash\Wedge.$$

\subsection{Pl\"ucker and \texorpdfstring{$\SI$}{TEXT}-varieties}\label{sec:PluSIVar}
First we reproduce the original definition of Pl\"ucker variety by Draisma--Eggermont~\cite{DraEgger}.
\begin{definition}\label{def:PluVar}
A \textit{Pl\"ucker variety} is a sequence $(\XX_{p})_{p \in \ZZO}$ of functors from the category $\Vec$ to the category of varieties $\Var$ satisfying the following axioms:
\begin{enumerate}
    \item For all vector spaces $V$ and for all $p \in \ZZO$, the variety $\XX_{p}(V)$ is a closed subvariety of $\bigwedgem{p}V$.
    
    \item For all $p \in \ZZO$ and for all linear maps $\phi: V \rightarrow W$, the map $\XX_{p}(\phi): \XX_{p}(V) \rightarrow \XX_{p}(W)$ coincides with the restriction of $\bigwedgem{p}\phi$.
    
    \item If $V$ is a vector space of dimension $n+p$ with $n, p \in \ZZO$, and $\star: \bigwedgem{p}V \rightarrow \bigwedgem{n}V^{*}$ is the Hodge dual, then the transformation $\star$ maps $\XX_{p}(V)$ into $\XX_{n}(V^*)$.
\end{enumerate}

Pl\"ucker varieties form a category in a natural way; we denote it by $\Plu$. 
\end{definition}

Next we introduce the more general notion of $\SI$-variety.

\begin{definition}\label{def:SIVar}
A \textit{$\SI$-variety} $\XX$ is a set of closed varieties $\XX = \{\XX_{n, p} \subseteq \bigwedgem{p}V_{n,p}\}_{n, p \in \ZZO}$ satisfying the following conditions:
\begin{enumerate}
    \item[(i)] For all $p, n\in \ZZO$, the maps $i_{n,p}$ and $j_{n,p}$ induce injections of $\XX_{n,p}$ into $\XX_{n+1,p}$ and $\XX_{n,p}$ into $\XX_{n,p+1}$ respectively;
    \item[(ii)] For all $p, n\in \ZZO$, the maps $i_{n,p}^{\dagger}$ and $j_{n,p}^{\dagger}$ induce surjections of $\XX_{n+1,p}$ onto $\XX_{n,p}$ and $\XX_{n,p+1}$ onto $\XX_{n,p}$ respectively;
    \item[(iii)] For all $p, n\in \ZZO$, the variety $\XX_{n, p}$ is $\GL_{n,p}$-invariant.
\end{enumerate}
$\SI$-varieties form a category in a natural way; we denote it by $\SIVar$.
\end{definition}

\begin{remark}\label{rem:SIVarDef} In the definition, injectivity and surjectivity conditions are automatically satisfied. Indeed, $i_{n,p}$ and $j_{n,p}$ are injective, so are their restrictions to $\XX_{n,p}$. Also, $i^{\dagger}_{n,p} \circ i_{n,p} = \id_{\bigwedgem{p}V_{n,p}}$ and $j^{\dagger}_{n,p} \circ j_{n,p} = \id_{\bigwedgem{p}V_{n,p}}$, so the restrictions of $i_{n,p}^{\dagger}$ and $j_{n,p}^{\dagger}$ to the components of $\SI$-variety are surjective.
\end{remark}

\begin{proposition}\label{prop:PluToSI}
Every Pl\"ucker variety is naturally a $\SI$-variety, and $\Plu$ is a subcategory of $\SIVar$:
$$\Plu \longhookrightarrow \SIVar.$$
\end{proposition}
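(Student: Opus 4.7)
The plan is to define the functor $\Plu\to\SIVar$ by $\XX\mapsto\{\XX_{n,p}:=\XX_p(V_{n,p})\}_{n,p\in\ZZO}$ and then verify axioms (i)--(iii) of Definition \ref{def:SIVar}. Axiom (iii) is immediate: for $\phi\in\GL_{n,p}=\GL(V_{n,p})$, Plücker axiom (2) forces $\bigwedgem{p}\phi$ to restrict to a self-map of $\XX_p(V_{n,p})$, which is exactly $\GL_{n,p}$-invariance. Two of the four structural maps of $V_\infty$'s diagram are honest wedge powers of linear maps between the underlying spaces, so they are handled directly by Plücker axiom (2): the inclusion $i_{n,p}$ equals $\bigwedgem{p}\iota$ for $\iota\colon V_{n,p}\hookrightarrow V_{n+1,p}$, and the projection $i^\dagger_{n,p}$ equals $\bigwedgem{p}\pi$ for the projection $\pi\colon V_{n+1,p}\twoheadrightarrow V_{n,p}$ that kills $e_{-(n+1)}$.

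The main obstacle is treating $j_{n,p}$ and $j^\dagger_{n,p}$, which change the exterior degree and therefore are not wedge powers of linear maps on $V$. For these I must invoke Plücker axiom (3). A direct basis computation, using the identification $V_{n,q}^*\cong V_{q,n}$ via the bilinear form, establishes the sign-commutative intertwining identities
\[
\star_{V_{n,p+1}}\circ j_{n,p}\;=\;\pm\,\bigwedgem{n}\iota'\circ\star_{V_{n,p}},\qquad \star_{V_{n,p}}\circ j^\dagger_{n,p}\;=\;\pm\,\bigwedgem{n}\pi'\circ\star_{V_{n,p+1}},
\]
where $\iota'\colon V_{p,n}\hookrightarrow V_{p+1,n}$ is the inclusion and $\pi'\colon V_{p+1,n}\twoheadrightarrow V_{p,n}$ kills $e_{-(p+1)}$. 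Rearranging and using $\star\star=\pm\id$, these say that up to sign $j_{n,p}=\star\circ\bigwedgem{n}\iota'\circ\star$ and $j^\dagger_{n,p}=\star\circ\bigwedgem{n}\pi'\circ\star$. Each factor on the right preserves the $\XX$-components by axioms (3), (2), (3), respectively; the remaining sign is absorbed by the invariance of $\XX_p(V)$ under $-\id\in\GL(V)$, a special case of axiom (2). Hence $j_{n,p}(\XX_{n,p})\subseteq\XX_{n,p+1}$ and $j^\dagger_{n,p}(\XX_{n,p+1})\subseteq\XX_{n,p}$. The injectivity clause in (i) and the surjectivity clause in (ii) for the $\XX$-components are then automatic by Remark \ref{rem:SIVarDef}.

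For the morphisms, a natural transformation $\tau\colon\XX\Rightarrow\YY$ of Plücker functors restricts to maps $\tau_{n,p}\colon\XX_{n,p}\to\YY_{n,p}$, and its naturality against the very linear maps and Hodge stars used above yields compatibility with all of $i_{n,p},j_{n,p},i^\dagger_{n,p},j^\dagger_{n,p}$ and $\GL_{n,p}$, producing a morphism of $\SI$-varieties. This assignment is faithful on morphisms because any finite-dimensional $V$ is isomorphic to some $V_{\dim V,0}$, and naturality recovers $\tau_V$ from $\tau_{V_{\dim V,0}}$; it is injective on objects by the same argument applied to the functor $\XX$ itself. This yields the claimed embedding $\Plu\hookrightarrow\SIVar$.
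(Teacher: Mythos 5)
Your proof is correct and follows essentially the same route as the paper: the paper likewise sets $\XX_{n,p}:=\XX_p(V_{n,p})$ and checks axioms (i)--(iii), handling the $i$-maps and $\GL_{n,p}$-invariance via Pl\"ucker axiom (2) and the $j$-maps via axioms (2) and (3), merely citing Lemmata 2.3--2.4 of Draisma--Eggermont for the Hodge-star conjugation identity that you write out explicitly (and, like the paper, invoking Remark \ref{rem:SIVarDef} for injectivity/surjectivity). One cosmetic point: $\bigwedgem{p}(-\id_V)=(-1)^p\,\id$, so ``$-\id\in\GL(V)$'' does not literally yield invariance of $\XX_p(V)$ under negation; but the components are affine cones, and in any case conjugating $\star$ by $\bigwedgem{p}g$ rescales it by $\det g$, so axioms (2) and (3) together give stability under all nonzero scalars and the residual sign is harmless.
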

\begin{proof} For an arbitrary Pl\"ucker variety $\XX = (\XX_{p})$ we consider the set $\{\XX_{n, p}\}$ where $\XX_{n,p} := \XX_p(V_{n,p})$. We prove that this set is a $\SI$-variety.

The conditions for the maps $i_{n,p}$ and $i_{n,p}^{\dagger}$ follows from condition (2) for Pl\"ucker varieties. The conditions for the maps $j_{n, p}$ and $j_{n, p}^{\dagger}$ follows from conditions (2) and (3) for Pl\"ucker varieties. In detail, Lemmata 2.3 and 2.4 of \cite{DraEgger} and Remark \ref{rem:SIVarDef} above give the proof. The $\GL$-invariancy condition follows from condition (2) for Pl\"ucker varieties.
\end{proof}
\begin{remark}
The subcategory $\Plu$ is not a full subcategory in $\SIVar$. 
\end{remark}

Pfaffian and hyper-Pfaffian varieties give explicit examples of $\SI$-varieties which are not Pl\"ucker, see Section \ref{sec:SINotPlu}.

\begin{definition}
The \textit{limiting variety} $\XX_{\infty}$ for a $\SI$-variety $\XX$ is the inverse limit of the varieties $\XX_{n,p}$ with respect to the maps $i_{n,p}^{\dagger}$ and $j_{n,p}^{\dagger}$:
$$\XX_{\infty} := \varprojlim_{n,p} \XX_{n,p}.$$
$\XX_{\infty}$ is an inverse limit of affine schemes, so it is a subscheme of the affine space $\WedgeDU$. The $\GGL$-action on $\XXX$ is inherited from the $\XX_{n,p}$'s; it coincides with the restriction of the $\GGL$-action on $\WedgeDU$. The procedure of taking a limiting variety is a functor $(\_)_{\infty}: \SIVar \rightarrow \Var$.
\end{definition}

\subsection{Examples of Pl\"ucker and \texorpdfstring{$\SI$}{TEXT}-varieties} \label{sec:ExamplesSI}
We give several examples and constructions of Pl\"ucker and $\SI$-varieties:
\begin{enumerate}
    \item First examples of Pl\"ucker varieties are the trivial ones:  $\XX_{n,p} = \emptyset$ and $\XX_{n,p} = \{0\}$ for all $n, p \in \ZZO$. An opposite example is the \textit{ambient Pl\"ucker variety} with  $\XX_{n,p} = \bigwedgem{p}V_{n,p}$, $n, p \in \ZZO$. The limiting variety of the ambient Pl\"ucker variety coincides with the unrestricted dual infinite wedge $\WedgeDU$.
    \item The most popular example of a Pl\"ucker variety is the \textit{Grassmann Pl\"ucker variety} $\GGr$, defined by the sequence of Grassmannians $\GGr_{n, p} = \mathrm{Gr}(p, n+p) \varsubsetneq \bigwedgem{p}V_{n,p}$, i.e., the sequence of Grassmannians form the Pl\"ucker variety $\GGr$ in a natural manner.  The limiting variety $\GGr_{\infty}$ is classically known as the Sato Grassmannian $\SGr$ \cite{DateJimboKashiwaraMiwa81, SatoSato82, SegalWilson85}. We attentively consider this example in Section \ref{sec:Gra}.  
    \item The operations of intersection, union, join, and taking tangential variety are defined in the categories $\Plu$ and $\SIVar$. So, for example, if two Pl\"ucker varieties $\XX, \YY$ are given, we can consider the union $\XX \cup \YY$, the intersection $\XX \cap \YY$, the join $\XX+\YY$, and the tangential Pl\"ucker variety $\tau \XX$. 
\end{enumerate}

Pfaffian and, more generally, hyper-Pfaffian $\SI$-varieties, except few exceptions (see Section \ref{sec:SINotPlu}), are examples of $\SI$-varieties which are not Pl\"ucker; for definitions see Sections \ref{sec:Pfa} and \ref{sec:hyPfa} respectively.

\subsection{Maximal \texorpdfstring{$\SI$}{TEXT}-varieties} \label{sec:MaxMinSIVar}
In what follows we will need a notion of the maximal $\SI$-variety with some fixed component. This section is devoted to the construction of such varieties. In particular, the existence of such varieties follows.  

Let $X$ be a $\GL_{n,p}$-invariant subvariety inside $\bigwedgem{p}V_{n,p}$. We can ask for a description of all $\SI$-varieties $\YY$ such that $\YY_{n,p} = X$.

What are the possibilities for $\YY_{n+1, p}$? On the one hand, the $(n+1, p)$-component should contain the variety $\overline{\GL_{n+1,p}\cdot i_{n,p}(X)}$; on the other hand, the component should be contained in the variety $(i_{n,p}^{\dagger})^{-1}\left( X \right)$. The same logic is applicable for the component $\YY_{n, p+1}$ and $j$-maps.

Generalizing, we get the following statement.

\begin{proposition}\label{prop:components}
For any natural $n,p$ and any $\SI$-variety $\YY$, we have the inclusions
\begin{itemize}
    \item[a)] $$\overline{\bigcup_{i} i(\YY_{n-1, p})} \subseteq \YY_{n, p} \subseteq \bigcap_{i^{\dagger}} (i^{\dagger})^{-1}(\YY_{n-1,p}),$$
where the right intersection runs over the orbit of the map $i_{n-1,p}$ under the $\GL_{n-1,p}\times\GL_{n, p}$-action and the left union runs over the orbit of $i_{n-1,p}^{\dagger}$ under the same group;
    \item[b)] $$\overline{\bigcup_{j} j(\YY_{n, p-1})} \subseteq \YY_{n, p} \subseteq \bigcap_{j^{\dagger}} (j^{\dagger})^{-1}(\YY_{n,p-1}),$$
where the right intersection runs over the orbit of the map $j_{n,p-1}$ under the $\GL_{n, p-1}\times\GL_{n, p}$-action and the left union runs over the orbit of $j_{n,p-1}^{\dagger}$ under the same group.
\end{itemize}
\end{proposition}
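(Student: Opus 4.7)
The plan is to deduce both (a) and (b) directly from axioms (i)--(iii) in the definition of a $\SI$-variety by a short orbit argument. Since parts (a) and (b) are completely symmetric (replace $i_{n-1,p}$, $i^{\dagger}_{n-1,p}$ and the group $\GL_{n-1,p} \times \GL_{n,p}$ by $j_{n,p-1}$, $j^{\dagger}_{n,p-1}$ and $\GL_{n,p-1} \times \GL_{n,p}$), I would only spell out part (a) and transfer (b) by a mechanical substitution.

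For the left inclusion of (a), I would observe that a general member $i$ of the orbit of $i_{n-1,p}$ under $\GL_{n-1,p} \times \GL_{n,p}$ can be written as $i = h \circ i_{n-1,p} \circ g^{-1}$ for some $g \in \GL_{n-1,p}$ and $h \in \GL_{n,p}$. By axiom (iii), $g^{-1}(\YY_{n-1,p}) = \YY_{n-1,p}$; by axiom (i), $i_{n-1,p}(\YY_{n-1,p}) \subseteq \YY_{n,p}$; and by axiom (iii) again, $h(\YY_{n,p}) = \YY_{n,p}$. Chaining these three steps yields $i(\YY_{n-1,p}) \subseteq \YY_{n,p}$. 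Since $\YY_{n,p}$ is closed, first unioning over $i$ in the orbit and then taking the closure produces the asserted containment.

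For the right inclusion, I would run the dual argument. A general element $i^{\dagger}$ of the orbit of $i^{\dagger}_{n-1,p}$ has the form $i^{\dagger} = g \circ i^{\dagger}_{n-1,p} \circ h^{-1}$ with $(g,h) \in \GL_{n-1,p} \times \GL_{n,p}$. Using $\GL_{n,p}$-invariance of $\YY_{n,p}$, axiom (ii) which guarantees $i^{\dagger}_{n-1,p}(\YY_{n,p}) \subseteq \YY_{n-1,p}$, and $\GL_{n-1,p}$-invariance of $\YY_{n-1,p}$, one gets $i^{\dagger}(\YY_{n,p}) \subseteq \YY_{n-1,p}$, i.e.\ $\YY_{n,p} \subseteq (i^{\dagger})^{-1}(\YY_{n-1,p})$. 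Intersecting over the orbit gives the right-hand inclusion in (a), and (b) follows by swapping $i$ with $j$ throughout.

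I do not anticipate a serious obstacle; the proposition is essentially an unpacking of the defining axioms. The only point requiring some care is the bookkeeping of the $\GL \times \GL$-action -- in particular the ``$g^{-1}$ on the source, $h$ on the target'' convention and its mirror on the dual side -- but this is purely notational. No topological subtlety enters beyond the fact that $\YY_{n,p}$ is closed, which is used precisely once to pass from the union to its closure in the left inclusion.
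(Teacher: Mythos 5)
Your argument is correct and is exactly the unpacking of axioms (i)--(iii) that the paper intends: the paper only sketches this proposition in the preceding paragraph (``the $(n+1,p)$-component should contain $\overline{\GL_{n+1,p}\cdot i_{n,p}(X)}$ \dots and be contained in $(i_{n,p}^{\dagger})^{-1}(X)$'') and gives no further proof. Your orbit bookkeeping, the use of closedness to pass from the union to its closure, and the symmetric treatment of part (b) all match the intended reasoning; note only that the paper's phrasing swaps ``left union'' and ``right intersection'' when naming which orbit each ranges over, and you have silently corrected this.
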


Explicit examples of situations when the left and right sides do not coincide are given in Section \ref{sec:ExplHPfa}.  

\begin{definition}
We call a $\SI$-variety $\YY$ \textit{maximal with respect to the $(n,p)$-coordinate}, or just \textit{$(n,p)$-maximal}, if for all pairs $(N,P)\>(n,p)$ the right inclusions of Proposition \ref{prop:components} are equalities, i.e., 
$$\YY_{N, P} = \bigcap_{i^{\dagger}} (i^{\dagger})^{-1}(\YY_{N-1,P}) \text{ and }\YY_{N, P} = \bigcap_{j^{\dagger}} (j^{\dagger})^{-1}(\YY_{N,P-1}).$$
The equalities are consistent due to the commutativity for $i^{\dagger}$- and $j^{\dagger}$-maps.
\end{definition}
Note that if the $\SI$-variety $\XX$ is $(n,p)$-maximal then all components $\XX_{N,P}$ with $N>n$ or $P>p$ are determined by the component $\XX_{n,p}$: informally, all these components are given by (combinatorially) the same equations as the variety $\XX_{n,p} \subseteq \bigwedgem{p}V_{n,p}$. 

The following examples of maximal $\SI$-varieties clarify this point of view:
\begin{enumerate}
    \item[$(1)$] $(n,p)$-maximal variety $\XX$ with $\XX_{n, p} = \bigwedgem{p}V_{n,p}$ satisfies $\XX_{N,P} = \bigwedgem{P}V_{N,P}$ for $(N,P) \> (n,p)$;
    \item[$(2)$] $(2,2)$-maximal variety with $\XX_{2,2} = \Gr(2,4)$ coincides with the Grassmannian $\SI$-variety $\GGr$;
    \item[$(3)$] generalizing both previous examples, if the $\SI$-variety $\XX$ is $(n,p)$-maximal and the variety $\XX_{n,p}$ is defined by the ideal $I_{n,p}$, then the ideals $I_{N,P}$ for the varieties $\XX_{N,P}$ have the following form
    $$I_{N,P} = \bigcap_{k^{\dagger}} (k^{\dagger})^{*}(I_{n,p}),$$
    where $k^{\dagger}$ runs over all compositions of $i^{\dagger}$- and $j^{\dagger}$-maps of the form $\XX_{N,P} \rightarrow \XX_{n,p}$.
\end{enumerate}
Further examples are given by Pfaffians (Section \ref{sec:Pfa}) and hyper-Pfaffians (Section \ref{sec:hyPfa}).

Analogously $(n,p)$-maximal varieties, we can define the \textit{$(n, p)$-minimal} $\SI$-variety. However, we emphasize that the existence of such varieties needs a proof (one such the author knows from a private communication with Jan Draisma). Using the existence of minimal varieties, it can be proved that in general the $j^{\dagger}$-image of the $\bigcap_{j^{\dagger}} (j^{\dagger})^{-1}(\YY_{n,p-1})$ coincides with $\YY_{n,p}$. We do not need these facts, so proofs are ommitted.

\subsection{Grassmannian is equivariantly noetherian}\label{sec:Gra}

We are interested in $\SI$-varieties that are ``equivariantly noetherian''. Before formulating this property rigorously, we give a motivating example of equivariantly noetherian Pl\"ucker variety.

It is known \cite{KasmanEtAl} that an arbitrary Grassmannian $\Gr(k,n)$ can be described set-theoretically as an intersection of all possible pullbacks (of linear maps) of the smallest (nontrivial) Grassmannian $\Gr(2,4)$:
$$\Gr(k, n) = \bigcap_{\phi:\;\KK^n \rightarrow \KK^4} \phi^{-1} (\Gr(2,4)).$$
So the Pl\"ucker variety $\GGr$ is defined set-theoretically by pullbacks of equations for $\GGr_{2,2} = \Gr(2,4)$. In other words, Theorem \ref{Thm:MainThm} holds in the case $\XX = \GGr$ with $p_{0} = 2, V_{0} = \KK^{4}$. 

This description can be rephrased geometrically: a (projective) Grassmann variety coincides with a set of decomposable vectors. Equivalently, for any Grassmann variety $\GGr_{n,p}$, the set of its $\KK$-points is a union of two $\GL_{n, p}$-orbits:
$$\GGr_{n,p} = \GL_{n,p}\cdot 0 \sqcup \GL_{n,p} \cdot e_{1}\wedge\dots\wedge e_{p}.$$
Indeed, for the equivalence we note that the case $(n,p) = (2,2)$ is the smallest case when the orbit $\GL_{n,p} \cdot e_{1}\wedge\dots\wedge e_{p}$ does not coincide with the ambient space $\bigwedgem{p}V_{n,p}\backslash\{0\}$. For a detailed proof see \cite{KasmanEtAl}.

The same property can be rephrased in the language of equations as follows. It is classically known \cite[p. 211]{GriHar94} that equations for the Grassmannian $\Gr(2, n)$ (its $\KK$-points) are given by pullbacks of the equation for the Grassmannian $\Gr(2,4)$. In detail, $\Gr(2,4)$ is given by the unique equation 
$$\pf^{(2)}_{\{1234\}} := x_{12}x_{34} - x_{13}x_{24} + x_{14}x_{23},$$
where $x_{ij}$ are coordinates on the space $\bigwedgem{2}\KK^4$.
This equation is known as the Pfaffian of degree 2. It spans a 1-dimensional subrepresentation of the $\GL_{4}$-representation $\Sym^{2} \left(\bigwedgem{2}\KK^4 \right)$ with highest weight $(1,1,1,1)$. In the case of an arbitrary $\Gr(2, n)$, the equations form a subrepresentation inside $\Sym^{2} \left(\bigwedgem{2}\KK^n \right)$ with the same highest weight $(1,1,1,1)$. This subrepresentation has as a basis the following set of polynomials:
$$\pf^{(2)}_{\{ijkl\}} = x_{ij}x_{kl} - x_{ik}x_{jl} + x_{il}x_{jk} \text{ for all }\{ijkl\} \in \bigwedgem{4}[n].$$
\begin{remark}
The latter property holds for $\KK$-points only, it is wrong on an ideal-theoretic level: the decomposition of $\Sym^{2}\left( \Wedge \right)$ into irreducible has infinitely many non-isomorphic summands, see \cite[Example I.8.9(b)]{MacDonald}. Related results for ideals of secant varieties of Grassmannians are in \cite{Laud18}.
\end{remark}

All three reformulations (set-theoretic, orbit-theoretic, and equation-theoretic) can be restated for the single variety $\GGr_{\infty}$ instead of the set $\{\GGr_{n,p}\}_{n,p \in \ZZO}$. For example, the Sato Grassmannian $\GGr_{\infty} = \SGr$ coincides with a union of the $\GL_{\infty}$-orbit of the highest weight vector $e_{1,2,3,\dots} = e_{1} \wedge e_{2} \wedge e_{3} \wedge \dots \in \Wedge$ and the zero vector $0 \in \Wedge$; see \cite{MiwaJinboJimboDate00} for details and connections to the KP hierarchy (originally appeared in \cite{DateJimboKashiwaraMiwa81}).

\subsection{Bounded Pl\"ucker varieties}\label{sec:bounded}

Describing the class of equivariantly noetherian Pl\"ucker and, more generally, $\SI$-varieties is a natural problem. Pursuing this question for Pl\"ucker varieties, Draisma--Eggermont introduced the following definition.

\begin{definition}\label{def:bddPluVar}
We call a Pl\"ucker variety $\XX$ \textit{bounded} if there exists a finite dimensional vector space $W$ such that the variety $\XX_{2}(W)$ does not coincide with $\bigwedgem{2}W$.
\end{definition}

The next theorem is a central result of \cite{DraEgger}.

\begin{theorem}\label{MainThmBdd} (Equivariant noetherianity for bounded Pl\"ucker varieties) 
Let $\XX$ be a bounded Pl\"ucker variety. Any closed $\GGL$-stable subset $Z$ of $\XX_{\infty}$ is contained in a union of a finite number of $\GGL$-orbits. Reformulating, there exists a $p_0 \in \mathbb{Z}_{\geqslant 0}$ and a finite dimensional vector space $V_{0}$ such that ${\bf X}$ is defined set-theoretically by pullbacks of equations for $\XX_{p_0}(V_0)$.
\end{theorem}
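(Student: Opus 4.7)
The plan is to reduce to noetherianity of a Pfaffian Pl\"ucker variety. First I would show that any bounded Pl\"ucker variety $\XX$ is contained in a Pfaffian Pl\"ucker variety $\PPf^{(k)}$ of some finite rank $k$ (where $\PPf^{(k)}_{2}(V) \subseteq \bigwedgem{2}V$ consists of 2-forms of Pfaffian rank at most $k$, and the higher components are determined by Hodge duality and functoriality). Then I would prove equivariant noetherianity for $\PPf^{(k)}_{\infty}$ directly; since closed subsets of a noetherian topological space are noetherian, and $Z \subseteq \XXX \subseteq \PPf^{(k)}_{\infty}$, the theorem follows.

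For the embedding step, boundedness provides a finite-dimensional $W$ and a nonzero $\GL(W)$-equivariant polynomial $f$ vanishing on $\XX_{2}(W) \subsetneq \bigwedgem{2}W$. Decomposing $\Sym^{\bullet}(\bigwedgem{2}W)$ into irreducible $\GL(W)$-summands via the Cauchy/plethystic identity, the $\GL(W)$-submodule generated by $f$ must contain an isotypic component whose lowest-degree representative is a genuine sub-Pfaffian of some finite rank $k$. Functoriality of $\XX_{2}$ then transports this vanishing to every $V$, giving $\XX_{2}(V) \subseteq \Pf^{(k)}(V)$, and the Hodge-duality axiom of Definition \ref{def:PluVar}(3) combined with condition (2) propagates the inclusion to all components, so $\XXX \subseteq \PPf^{(k)}_{\infty}$.

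For the Pfaffian noetherianity I would induct on $k$. The base case $k = 0$ reduces to $\{0\}$. For the inductive step, take a closed $\GGL$-invariant subset $Z \subseteq \PPf^{(k)}_{\infty}$; after normalizing by the $\GGL$-action, a generic point of $Z$ has the shape $e_{-1}\wedge e_{1} + \omega'$ with $\omega'$ supported on a complementary ``shifted'' copy of the infinite wedge. The tail $\omega'$ lies in $\PPf^{(k-1)}$ of that complement, and a shift isomorphism identifies the complement with the original ambient $\WedgeDU$, so the inductive hypothesis applies to the stratum defined by a given leading term. Gluing finitely many strata (finiteness coming from the inductive bound and the classical noetherianity of the finite-dimensional Pfaffian $\Pf^{(k)}(V_{n,p})$) then produces the desired descending chain condition on $\PPf^{(k)}_{\infty}$.

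The main obstacle will be rigorously implementing the ``peel'' step: the shift $e_{i} \mapsto e_{i+1}$ is not an element of $\GGL$ (it is injective but not invertible on $V_{\infty}$), so one must enlarge the acting semigroup and show that the closure of any $\GGL$-orbit inside $\PPf^{(k)}_{\infty}$ is automatically stable under this shift, i.e., a form of polynomial functoriality. Establishing this shift-stability and combining it with a noetherian induction on the pair (rank, weight of the leading monomial of a generic point) is the technical heart of the Draisma--Eggermont argument, and is precisely where hyper-Pfaffian machinery as in Theorem \ref{thm:HPfNoeth} will be needed in this paper to dispense with the boundedness hypothesis.
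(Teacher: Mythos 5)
First, a point of reference: the paper does not actually prove this theorem --- it is quoted from Draisma--Eggermont \cite{DraEgger}, and the only argument the paper offers is the reduction sketched in Section \ref{sec:Pfa} (bounded $\Rightarrow$ contained in some $\PPf^{(l)}$ via the rank filtration on $\bigwedgem{2}V$) together with the noetherianity of Pfaffians, which is the $m=r=2$ case of the proof of Theorem \ref{thm:HPfNoeth}. Your overall route --- embed $\XX$ into a finite-rank Pfaffian Pl\"ucker variety, then prove $\GGL$-noetherianity of $\PPf^{(k)}_{\infty}$ --- is exactly that route. Your embedding step reaches the right conclusion, but the representation-theoretic detour is both unnecessary and shaky: there is no reason the $\GL(W)$-submodule generated by $f$ must contain an honest Pfaffian form. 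What you need is purely set-theoretic: the $\GL(W)$-orbits on $\bigwedgem{2}W$ are the rank strata, so a proper closed invariant subvariety lies in some $\Pf^{(k)}(W)$, and covariant functoriality forces $\XX_{2}(V)\subseteq \Pf^{(k)}(V)$ for every $V$, since a point of rank $>k$ would map under a suitable $\phi:V\to W$ to a point of rank $>k$ in $\bigwedgem{2}W$. Maximality of the Pfaffian $\SI$-variety then propagates the containment to all components.

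The genuine gap is in the noetherianity of $\PPf^{(k)}_{\infty}$, which is the entire content of the theorem. Your ``peel the leading term'' induction does not work as stated: points of $\PPf^{(k)}_{\infty}\subseteq\WedgeDU$ are compatible systems of $p$-forms for all $(n,p)$ simultaneously (semi-infinite wedges), not $2$-forms, so ``a generic point has the shape $e_{-1}\wedge e_{1}+\omega'$ with $\omega'$ of lower rank in a shifted copy'' is not meaningful there; and, as you concede, the shift $e_{i}\mapsto e_{i+1}$ is not in $\GGL$ and you give no argument that orbit closures are shift-stable. In effect you have deferred the proof to ``the technical heart of the Draisma--Eggermont argument.'' That argument is organized differently and involves no peeling or shifting: one writes $\Pf^{(l+1)}_{\infty}=\Pf^{(l)}_{\infty}\cup\GGL\cdot Z$ where $Z$ is the locus where one \emph{fixed} Pfaffian $\pf^{(l)}$ is nonzero (Lemma \ref{OneElem} handles the two-sided version); one shows that on $Z$ every non-good coordinate is a rational function of smaller coordinates with denominator a power of $\pf^{(l)}$, so the projection of $Z$ to the good subspace $\Wedgegood$ is injective; and $\Wedgegood$ is $H$-noetherian by embedding into the matrix spaces $A_{a,b,c,d}$ of \cite[Theorem 1.5]{DraEgger}. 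Separately, the clause ``contained in a union of finitely many $\GGL$-orbits'' is strictly stronger than the descending chain condition, so even a completed noetherianity argument would not, by itself, deliver the first sentence of the theorem as you have set things up.
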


\subsection{Pfaffian \texorpdfstring{$\SI$}{TEXT}-varieties}\label{sec:Pfa}

The following generalization of the Grassmann Pl\"ucker variety is essential for the proof of Draisma--Eggermont.

\begin{definition}\label{PfDef}
The \textit{Pfaffian form} $\pf_{A}^{(l)}$ of degree $l$ on a set $A$ of cardinality $2l$ is the polynomial form given by the following formula
$$\pf^{(l)}_{A} := \sum_{I_{1}\sqcup \dots \sqcup I_{l} = A} \sgn(I_{1}, \dots, I_{l}) x_{I_1}\dots x_{I_l},$$
where the summation runs over all unordered partitions $A = I_{1} \sqcup \dots \sqcup I_{l}$ into 2-sets $I_i$, and the $\sgn(I_{1}, \dots, I_{l})$ is a sign of the permutation given in the one-line form by $(I_{1}, \dots, I_{l})$.

The \textit{Pfaffian variety} $\Pf^{(l)}_{B}$ of degree $l$ on a set $B$ is the closed subvariety of $\bigwedgem{2}\KK^{B}$ given by the Pfaffians $\pf^{(l)}_{A}$ for all $A \subseteq B$ of cardinality $2l$:
$$\Pf^{(l)}_{B} = \Pf^{(l)}(\KK^{B}):= \bigcap_{A \subseteq B, |A| = 2l} \Pf^{(l)}_{A} = \bigcap_{A \subseteq B, |A| = 2l} \{ \pf^{(l)}_{A} = 0\}.$$
Generally, $\Pf^{(l)}(V)$ is a variety given by all degree-$l$ Pfaffian forms inside the vector space $\bigwedgem{2}V$.

The \textit{Pfaffian $\SI$-variety} $\PPf^{(l)}$ of degree $l$ is the $(2l-2, 2)$-maximal $\SI$-variety with 
$$\PPf^{(l)}_{2l-2, 2} = \Pf^{(l)}(V_{2l-2,2}) \subseteq \bigwedgem{2}V_{2l-2,2}.$$
It will be proven (Theorem \ref{thm:HPfComponents}) that for every $l$ such defined $\SI$-variety exists, and it is uniquely defined by the property above. 
\end{definition}

In the case $l=2$, these definitions produce Grassmannians $\PPf^{(2)}_{n,p} = \Gr(p, n+p)$, see Section \ref{sec:Gra}. So we have the equality of $\SI$-varieties $\PPf^{(2)} = \GGr$; in particular, the $\SI$-variety $\PPf^{(2)}$ is Pl\"ucker. The general Pfaffian $\PPf^{(l)}$ is also equivariantly noetherian \cite{DraEgger}, but for $l \geqslant 3$ they are not Pl\"ucker, see Proposition \ref{prop:PfPlu}.

We note that our notation for Pfaffian varieties $\Pf^{(l)}(V)$ differs from the one of Draisma--Eggermont by the shift of the argument by one: in our convention the Pfaffian forms of degree $l$ are zero on the Pfaffian variety of degree $l$.

As mentioned above, Grassmann variety is the set of all decomposable vectors in the corresponding exterior power of a vector space. Analogously, there exists a geometrical description for all Pfaffian varieties: a vector $v \in \bigwedgem{2}V$ satisfies $v \in \Pf^{(l+1)}(V)$ if and only if $v$ has rank at most $l$. In other words, the rank filtration on the second exterior power $\bigwedgem{2}V$ coincides with the one given by Pfaffians:
\begin{align*}
    \left\{ \dots \subseteq R^l(V) := \{v \in \bigwedgem{2}V:\; \rk(v) \leqslant l\} \subseteq R^{l+1}(V) \subseteq \dots  \subseteq \bigwedgem{2}V \right\}& =\\
    = \left\{ \dots \subseteq \Pf^{(l)}(V) \subseteq \Pf^{(l+1)}(V) \subseteq \dots \subseteq \bigwedgem{2}V \right\}&.
\end{align*}
Using this filtration, we can see that any bounded Pl\"ucker variety is contained in some Pfaffian $\SI$-variety $\PPf^{(l)}$. Therefore any bounded Pl\"ucker variety is equivariantly noetherian.

The goal of this paper is to prove that the Draisma--Eggermont result (equivariant noetherianity) actually holds in its most general form. Conceptually, we describe the analogous filtration on an arbitrary exterior power $\bigwedgem{p}V$ via the so-called hyper-Pfaffian varieties. Description of the filtration in general situation is presented in Section \ref{GeneralFiltration}.   





\section{Hyper-Pfaffians}\label{sec:hyPfa}

In this section we recall a natural generalization of Pfaffians, the so-called hyper-Pfaffian varieties \cite[Definition 3.4]{Bar95}. We also present here some useful properties of these varieties and give an explicit example of $\SI$-variety which is not Pl\"ucker.

\subsection{Definitions} \begin{definition}\label{HyperDef}
The \textit{hyper-Pfaffian form} $\hpf_{A}^{(m,l)}$ of degree $l$ and width $m = 2m_1 $ on a set $A$ of cardinality $|A| = ml$ is the polynomial form in $\Sym^{l}\left(\bigwedgem{m}\KK^{A}\right)$ given by the formula
$$\hpf^{(m, l)}_{A} = \hpf^{(m, l)}_{A}({\bf x}) := \sum_{I_{1}\sqcup \dots \sqcup I_{l} = A} \sgn(I_{1}, \dots, I_{l})\; x_{I_1}\dots x_{I_l},$$
where the summation runs over all unordered partitions $A = I_{1} \sqcup \dots \sqcup I_{l}$ into $m$-sets $I_i$, the $\sgn(I_{1}, \dots, I_{l})$ is a sign of the permutation given in the one-line form by $(I_{1}, \dots, I_{l})$, and ${\bf x}$ denotes the set of variables $\{x_{I}, I \in \bigwedgem{m}A\}$. We will denote the corresponding multilinear form depending on sets of variables ${\bf x}^{(1)}, \dots , {\bf x}^{(l)}$ by $\hpf^{(m,l)}_{A}({\bf x}^{(1)}, \dots , {\bf x}^{(l)})$.

The \textit{hyper-Pfaffian variety} $\HPf^{(m,l)}_{B}$ of degree $l$ and width $m$ on a set $B$ is the closed subvariety of $\bigwedgem{m}\KK^{B}$ given by the hyper-Pfaffians $\hpf^{(l)}_{A}$ for all $A \subseteq B$ of cardinality $|A| = ml$:
$$\HPf^{(m,l)}_{B} = \HPf^{(m,l)}(\KK^{B}) := \bigcap_{A \subseteq B, |A| = ml} \HPf^{(m,l)}_{A} = \bigcap_{A \subseteq B, |A| = ml} \{ \hpf^{(m,l)}_{A} = 0\}.$$
We define $\HPf^{(m,l)}(V) \subseteq \bigwedgem{m}V$ as a variety given by all degree-$l$ hyper-Pfaffian forms inside the vector space $\bigwedgem{l}V$.

The \textit{hyper-Pfaffian $\SI$-variety} $\HHPf^{(m, l)}$ of degree $l$ and width $m$ is the $(m(l-1),m)$-maximal $\SI$-variety with
$$\HHPf^{(m,l)}_{m(l-1), m} = \HPf^{(m,l)}(V_{m(l-1),m}) \subseteq \bigwedgem{m}V_{m(l-1),m}.$$
\end{definition}
\begin{remark}\label{Root} We give a couple of remarks about the definitions.
\begin{enumerate}
    \item[(1)] In the case $m=2$, we get the Pfaffians: $$\hpf^{(2,l)}_{A} = \pf^{(l)}_{A}\text{, }\HPf^{(2, l)}(V) = \Pf^{(l)}(V)\text{, and }\HHPf^{(2, l)} = \PPf^{(l)}.$$
    \item[(2)] For odd natural number $m$ we can define hyper-Pfaffian forms $\hpf^{(m,l)}$, but, because of the sign of the monomials, such forms are identically zero (or not $\GL$-invariant if we take only half of the monomials). However, this definition gives correctly defined skew-symmetric forms $\hpf^{(m,l)} \in \bigwedgem{l}( \bigwedgem{m} V)$. For instance, for $m =1$ we get the volume form: $$\hpf^{(1, n)}_{[n]} \big( (x_{i})_{1 \leqslant i \leqslant n} \big) =  x_{1}\wedge \dots \wedge x_{n}.$$
    \item[(3)] Generalizing the two previous remarks, the hyper-Pfaffian form $\hpf^{(m,l)}$ stands for an $m$-th root of the determinant. Indeed, on a set $\bigwedgem{l}\ZZ^{ml}$ the following equality holds $$\left(\hpf^{(m,l)}_{[ml]}\right)^m = \det: \bigwedgem{l}\ZZ^{ml} \rightarrow \bigwedgem{ml}\ZZ^{ml}.$$  
    \end{enumerate}
\end{remark}

For the main structural result on a general $\SI$-variety (Theorem \ref{thm:InclToHPf}) we will need the dual notion for a hyper-Pfaffian. 

\begin{definition}
The \textit{dual hyper-Pfaffian variety} $\HPf^{(r, s), \star}$ is defined as the Hodge-dual of the $\HPf^{(r, s)}$ on a dual vector space:
$$\HPf^{(r,s), \star} (V) := \star \HPf^{(r,s)} (V^{*}) \subseteq \bigwedgem{\dim V - r}V.$$
The dual hyper-Pfaffian varieties form the \textit{dual hyper-Pfaffian $\SI$-variety} $\HHPf^{(r,s), \star}$: it is the $(r,r(s-1))$-maximal $\SI$-variety with
$$\HHPf^{(r,s), \star}_{r, r(s-1)} = \HPf^{(r,s), \star}(V_{r, r(s-1)}) = \star\left(\HPf^{(r,s)}(V_{r(s-1), r})\right)\subseteq \bigwedgem{r(s-1)}V_{r,r(s-1)}.$$
A \textit{two-sided hyper-Pfaffian $\SI$-variety} $\HHPf^{(m,l),(r,s)}$ is an intersection of the hyper-Pfaffian $\HHPf^{(m,l)}$ and the dual hyper-Pfaffian $\HHPf^{(r,s), \star}$:
$$\HHPf^{(m,l),(r,s)} := \HHPf^{(m,l)} \cap \HHPf^{(r,s), \star}.$$
\end{definition}

The following result shows fundamental role of all (not only Pfaffian, but) hyper-Pfaffian forms for the exterior algebra.

\begin{proposition}\label{KeyMult}
Hyper-Pfaffian forms give the structure constants of the exterior algebra. Explicitly, if $V$ is a finite dimensional $\KK$-vector space, then for any $v_{1}, \dots, v_{l} \in \bigwedgem{m}V$ we have the equality
$$v_{1}\wedge \dots \wedge v_{l} = \sum_{A} \hpf_{A}^{(m, l)}(v_{1}, \dots, v_{l}) e_{A},$$
where $\{e_{A} := \bigwedge\limits_{i \in A} e_{i}, A\in \bigwedgem{ml}[V]\}$ is a basis for $\bigwedgem{m} V$. 
\end{proposition}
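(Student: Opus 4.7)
The plan is to expand both sides in a fixed basis and compare coefficients. I would fix the basis $\{e_i\}_{i \in [V]}$ of $V$ and expand each $v_k \in \bigwedgem{m}V$ as $v_k = \sum_{I} x_{I}^{(k)} e_{I}$, where $I$ ranges over the size-$m$ subsets of $[V]$ and $e_I = e_{i_1} \wedge \cdots \wedge e_{i_m}$ with $i_1 < \cdots < i_m$. Multilinearity of the exterior product then yields
$$v_1 \wedge \cdots \wedge v_l \;=\; \sum_{(I_1, \ldots, I_l)} x_{I_1}^{(1)} \cdots x_{I_l}^{(l)} \; e_{I_1} \wedge \cdots \wedge e_{I_l},$$
where the sum is over ordered $l$-tuples of size-$m$ subsets of $[V]$.

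Next I would invoke the basic property of the exterior product: the product $e_{I_1} \wedge \cdots \wedge e_{I_l}$ vanishes unless $I_1, \ldots, I_l$ are pairwise disjoint, and in that case equals $\sgn(I_1, \ldots, I_l)\, e_A$, where $A = I_1 \sqcup \cdots \sqcup I_l$ and $\sgn(I_1, \ldots, I_l)$ is the signature of the permutation rearranging the juxtaposed list $(I_1, \ldots, I_l)$ into the ascending enumeration of $A$ -- exactly the sign appearing in Definition \ref{HyperDef}. Grouping terms by $A$, the coefficient of $e_A$ on the left-hand side becomes
$$\sum_{I_1 \sqcup \cdots \sqcup I_l = A} \sgn(I_1, \ldots, I_l)\, x_{I_1}^{(1)} \cdots x_{I_l}^{(l)},$$
which I then identify with the multilinear form $\hpf^{(m,l)}_A(v_1, \ldots, v_l)$.

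The only real subtlety, and the step I would have to execute carefully, is reconciling ordered tuples with unordered partitions. The polynomial $\hpf^{(m,l)}_A(\mathbf{x})$ is written in Definition \ref{HyperDef} as a sum over unordered partitions of $A$, whereas the expansion above naturally produces a sum over ordered tuples. Passing to the polarization $\hpf^{(m,l)}_A(\mathbf{x}^{(1)}, \ldots, \mathbf{x}^{(l)})$ -- the multilinear form on $l$ arguments -- precisely converts the unordered partition sum into the ordered one, with each $x_{I_k}$ assigned to the $k$-th slot. For even $m$ this is the symmetric polarization; for odd $m$ one must view $\hpf^{(m,l)}$ as a skew-symmetric form in $\bigwedgem{l}(\bigwedgem{m}V)$, per Remark \ref{Root}(2), and the same identification carries through because swapping two parts picks up $(-1)^{m^2}$, matching the sign acquired when interchanging $v_k \leftrightarrow v_{k'}$ in the wedge product. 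Once this sign and multiplicity bookkeeping is settled, the coefficients of $e_A$ on the two sides agree term by term, and the identity follows.
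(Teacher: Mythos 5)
Your proof is correct and takes essentially the same route as the paper's: expand each $v_k$ in the basis, use multilinearity of the wedge product, and collect the coefficient of $e_A$ as the multilinear hyper-Pfaffian form. The care you take in passing from the unordered-partition sum of Definition \ref{HyperDef} to the ordered-tuple sum via the polarized form $\hpf^{(m,l)}_{A}(\mathbf{x}^{(1)},\dots,\mathbf{x}^{(l)})$ is a bookkeeping point the paper's one-line proof leaves implicit, but the argument is the same.
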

\begin{proof}
If $v_{i} = \sum\limits_{I \in \bigwedgem{m}[ V]} a_{i, I}e_{I}$, then $v_{1}\wedge\dots\wedge v_{l}$ is equal to 
$$\sum_{I_{1}, \dots, I_{l}} a_{1, I_{1}}\dots a_{l, I_{l}} e_{I_{1}} \wedge \dots \wedge e_{I_{l}} = \sum_{A\in \bigwedgem{ml}[ V]} \left( \sum_{I_{1}\sqcup\dots\sqcup I_{l} = A} a_{1, I_{1}}\dots a_{l, I_{l}} \sgn(I_{1}, \dots, I_{l})\right) e_{A}.$$
This expression coincides with the sum $\sum\limits_{A} \hpf_{A}^{(m, l)}(v_{1}, \dots, v_{l}) e_{A}$.
\end{proof}
\begin{corollary}\label{GeoHPf}
The hyper-Pfaffian variety $\HPf^{(m,l)}(V)$ coincides with the set of nilpotency degree-$l$ vectors in $\bigwedgem{m}V$:
$$\HPf^{(m,l)}(V) = \{v \in \bigwedgem{m}V:\;v^{\wedge l} = 0\}.$$
Analogously, $\HPf^{(r,s), \star}(V) = \{v \in \bigwedgem{\dim V - r}V:\;(\star v)^{\wedge s} = 0\}$. 
\end{corollary}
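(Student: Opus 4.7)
The corollary should follow almost immediately from Proposition \ref{KeyMult}, which already packages the hyper-Pfaffians as the structure constants of exterior multiplication. The plan is to take the identity
\[
v_1 \wedge \cdots \wedge v_l = \sum_A \hpf^{(m,l)}_A(v_1, \ldots, v_l)\, e_A
\]
and specialize all arguments to a single vector, $v_1 = v_2 = \cdots = v_l = v$, to obtain
\[
v^{\wedge l} = \sum_A \hpf^{(m,l)}_A(v, v, \ldots, v)\, e_A.
\]

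Next I would relate the diagonal value of the multilinear form $\hpf^{(m,l)}_A(\mathbf{x}^{(1)}, \ldots, \mathbf{x}^{(l)})$ to the symmetric polynomial $\hpf^{(m,l)}_A(\mathbf{x}) \in \Sym^{l}\bigl(\bigwedgem{m}\mathbb{K}^A\bigr)$ used in the definition of $\HPf^{(m,l)}(V)$. Since the multilinear form is, by construction, the full polarization of this polynomial, the standard restitution identity in characteristic $0$ gives $\hpf^{(m,l)}_A(v, \ldots, v) = l!\cdot \hpf^{(m,l)}_A(v)$ (up to a fixed normalization convention, at worst a nonzero scalar). Because $\{e_A\}_{|A|=ml}$ is a linearly independent family in $\bigwedgem{ml}V$, vanishing of $v^{\wedge l}$ is then equivalent to $\hpf^{(m,l)}_A(v) = 0$ for every $A$ of cardinality $ml$, which is precisely the defining condition for $v \in \HPf^{(m,l)}(V)$. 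This gives both inclusions at once, and as a byproduct shows that the defining equations of $\HPf^{(m,l)}(V)$ do not depend on the chosen basis.

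For the dual statement, I would simply unwind the definition $\HPf^{(r,s),\star}(V) := \star \HPf^{(r,s)}(V^*)$: an element $v \in \bigwedgem{\dim V - r}V$ lies in $\HPf^{(r,s),\star}(V)$ if and only if $\star v \in \bigwedgem{r}V^*$ lies in $\HPf^{(r,s)}(V^*)$. Applying the part just proved to the pair $(r,s)$ and to the vector space $V^*$, this becomes $(\star v)^{\wedge s} = 0$, as claimed.

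The only mildly delicate point is the polarization bookkeeping in the second paragraph — checking that the diagonal value of the multilinear $\hpf^{(m,l)}_A$ differs from the symmetric polynomial $\hpf^{(m,l)}_A(v)$ by a unit of $\mathbb{K}$. This is exactly where the standing hypothesis $\mathrm{char}\,\mathbb{K} = 0$ enters; everything else is formal manipulation of the wedge product and Hodge duality.
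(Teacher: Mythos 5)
Your proposal is correct and is essentially the paper's own proof, which consists of the single line ``Proposition \ref{KeyMult} for $v_1 = \dots = v_l = v$ proves the statement.'' The polarization bookkeeping you spell out (the $l!$ factor, harmless since $\mathrm{char}\,\KK = 0$) and the unwinding of the Hodge-dual case are exactly what the paper leaves implicit.
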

\begin{proof}
Proposition \ref{KeyMult} for $v_{1} = \dots = v_{l} = v$ proves the statement.
\end{proof}

\begin{remark}
Morally, Proposition \ref{KeyMult} is a restatement of the Grassmann--Berezin (fermionic) calculus \cite{Berezin66, Robinson99} in a coordinate form with use of the hyper-Pfaffian forms.
\end{remark}

\subsection{Hyper-Pfaffian \texorpdfstring{$\SI$}{TEXT}-varieties are well-defined} \label{sec:ExistenceHPfa}
The following theorem provides us with an explicit description of the components $\HHPf^{(m,l)}_{n,p}$ of hyper-Pfaffian $\SI$-varieties. The construction is consistent with definition \ref{HyperDef} (as well as definition \ref{PfDef} for Pfaffian $\SI$-varieties); the theorem proves the existence and uniqueness of the hyper-Pfaffian $\SI$-varieties.

\begin{theorem}\label{thm:HPfComponents}(Set-theoretical description of $\HPf^{(m,l)}(V)$)
We have the following explicit description of the components $\HHPf^{(m,l)}_{n, p} \subseteq \bigwedgem{p} V_{n,p}$:
\begin{itemize}
    \item If $p<m$, then $\HHPf^{(m,l)}_{n, p} = \bigwedgem{p}V_{n, p}$.
    \item If $p=m$, then $\HHPf^{(m,l)}_{n, m}$ is given by the equations $\hpf^{(m;l)}_{I}$ for all $m\cdot l$-subsets $I$ of the set $[n, m]$:
    $$\HHPf^{(m,l)}_{n, m} = 
    \begin{cases}
    \bigwedgem{m}V_{n,m} &\text{ for }n < m(l-1),\\
    \mathop{\bigcap\limits_{A\subseteq [n,m]}}\limits_{ |A| = ml}\; \{ \hpf^{(m,l)}_{A} = 0\} &\text{ for } n \geqslant m(l-1).
    \end{cases}{}
    $$
    In particular, $\HHPf^{(m,l)}_{m(l-1), m} = \{ \hpf^{(m,l)}_{[m(l-1), m]} = 0\}$ is a hyper surface in $\bigwedgem{m}V_{m(l-1), m}$.
    \item If $p>m$, then $\HHPf^{(m,l)}_{n, p} = \HPf^{(m,l)}(V_{n, p})$ is an intersection of pullbacks of the hyper-Pfaffians $\HHPf^{(m;l)}_{n, m}$. Explicitly, 
    $$\HHPf^{(m,l)}_{n, p} = 
    \begin{cases}
    \bigwedgem{p}V_{n,p} &\text{ for } n < m(l-1),\\
    \mathop{\bigcap\limits_{A\subseteq [n,p]}}\limits_{ |A| = ml}\; \{ \hpf^{(m,l)}_{A|J} = 0\} &\text{ for } n \geqslant m(l-1),
    \end{cases}{}
    $$
    where $$\hpf^{(m;l)}_{A|J} = \sum\limits_{I_1 \sqcup \dots \sqcup I_l = A} \sgn(I_1,\dots, I_l) x_{I_1 J}\dots x_{I_l J} \text{ and } J = [n, p] \backslash A.$$
\end{itemize}
\end{theorem}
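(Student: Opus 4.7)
The plan is to define candidate sets $X_{n,p} \subseteq \bigwedgem{p}V_{n,p}$ by the three cases in the statement, show that they assemble into an $\SI$-variety whose $(m(l-1),m)$-component equals $\HPf^{(m,l)}(V_{m(l-1),m})$, and verify the $(m(l-1),m)$-maximality condition; uniqueness of $\HHPf^{(m,l)}$ then follows at once from the definition of maximality.

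First I would record the explicit coordinate descriptions of $i^{\dagger}_{n,p}$ and $j^{\dagger}_{n,p}$ from Lemma~\ref{SetTheoretical42}, together with the observation that the families $\{\hpf^{(m,l)}_{A}\}$ and $\{\hpf^{(m,l)}_{A|J}\}$ span $\GL$-subrepresentations of the appropriate symmetric powers, so the prescribed zero loci are automatically $\GL_{n,p}$-invariant. Compatibility of the $X_{n,p}$'s with the lattice of $i$- and $j$-maps is geometric: by Corollary~\ref{GeoHPf}, for $p=m$ the set $X_{n,m}$ is the nilpotency locus $\{v \in \bigwedgem{m}V_{n,m} : v^{\wedge l}=0\}$, which is stable under $i_{n,m}$ (this map just adjoins a new basis direction to the ambient space), and $j_{n,m}(v)= v\wedge e_{m+1}$ automatically lies in the $(n,m+1)$-analogue of this locus since $e_{m+1}^{\wedge 2}=0$; the general case $p>m$ is handled by iteration. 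The surjectivity of the $\dagger$-maps on $X_{n,p}$ follows formally from $i^{\dagger}\circ i = j^{\dagger}\circ j = \id$ as in Remark~\ref{rem:SIVarDef}.

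The crucial step is to match the explicit equations with the intersections imposed by $(m(l-1),m)$-maximality. The base component at $(m(l-1),m)$ is cut out by the single hyper-Pfaffian $\hpf^{(m,l)}_{[m(l-1),m]}$, a $\GL_{m(l-1),m}$-semi-invariant spanning a one-dimensional subrepresentation of $\Sym^{l}(\bigwedgem{m}V_{m(l-1),m}^{*})$. For any $(N,P) \vartriangleright (m(l-1),m)$ I would check that: (a) every form listed in the theorem for $(N,P)$ -- $\hpf^{(m,l)}_{A}$ when $P=m$, or a capped form $\hpf^{(m,l)}_{A|J}$ when $P>m$ -- is obtained as the pullback of the base equation along a suitable $\GL$-twisted composition of $i^{\dagger}$- and $j^{\dagger}$-maps; (b) conversely, any vector in $\bigwedgem{P}V_{N,P}$ whose image under every $\GL$-twist of $i^{\dagger}$ lies in $X_{N-1,P}$ and whose image under every twist of $j^{\dagger}$ lies in $X_{N,P-1}$ must already satisfy all such equations. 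Together (a) and (b) realise $X_{N,P}$ as the right-hand side of the maximality condition. For $(n,p)$ with $p<m$ or $n<m(l-1)$, no non-trivial equation descends by pullback from the base component, and the $\SI$-variety axioms force $X_{n,p}=\bigwedgem{p}V_{n,p}$.

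The main obstacle is the bookkeeping in case $P>m$: tracking how the capped form $\hpf^{(m,l)}_{A|J}$, with $J$ recording the directions \emph{frozen} by the wedge, arises as the natural pullback when a composition of $j^{\dagger}$-maps descends from $\bigwedgem{P}V_{N,P}$ down to $\bigwedgem{m}V_{N,m}$. Once this combinatorial reduction is settled, the $\SI$-variety axioms hold by construction, the base component matches $\HPf^{(m,l)}(V_{m(l-1),m})$ by design, and the explicit formulas in the theorem are established; existence of $\HHPf^{(m,l)}$ is then proved, and uniqueness is automatic from $(m(l-1),m)$-maximality.
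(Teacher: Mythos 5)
Your proposal follows essentially the same route as the paper: the components above the base $(m(l-1),m)$ are identified with intersections of twisted pullbacks of the single base equation (which the paper treats as immediate from the definition of $(n,p)$-maximality), and the real content is showing that every component with $p<m$ or $n<m(l-1)$ is the full ambient space. The paper establishes the latter only in the sample case $\HHPf^{(4,2)}$, by exhibiting for each point of the smaller space an explicit lift supported on coordinates sharing a common index, which forces all hyper-Pfaffian forms to vanish on it; your packaging of the same fact --- the sections $i_{n,p}$, $j_{n,p}$ of the $\dagger$-maps already land inside the nilpotency locus of Corollary~\ref{GeoHPf}, so surjectivity onto the full target follows from $j^{\dagger}\circ j=\id$ --- is equivalent and arguably cleaner. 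One boundary verification is missing from your sketch: at the transition $n=m(l-1)-1\to m(l-1)$ you need that \emph{all} of $\bigwedgem{m}V_{m(l-1)-1,m}$ maps into $\{v^{\wedge l}=0\}$ under $i$, which is not ``stability of the locus'' but the dimension count $\dim V_{m(l-1)-1,m}=ml-1<ml$, so $v^{\wedge l}=0$ automatically (this is the $i^{\dagger}$-half of the paper's Observation). With that one line added, and the combinatorial identification of pullbacks with the capped forms $\hpf^{(m,l)}_{A|J}$ that you correctly flag as the main bookkeeping, your plan is sound.
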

\begin{remark}
Classically known that $\KK$-points of the Grassmannian $\PPf^{(2)}_{n,p} = \Gr(p, n+p)$ can be described by Pl\"ucker relations of the form 
$$\sum_{j \in J} \sgn(j, I) x_{I\cup j} x_{J\backslash j} \text{ for all }I\in \bigwedgem{p-1}[n+p], J\in \bigwedgem{p+1}[n+p].$$
However, the ideal generated by these relations (inside $\mathrm{Sym}\left(\bigwedgem{p} \ZZ^{2p} \right)$) is not radical, i.e., the set of these relations is not sufficient to generate $\Gr(p, n+p)$ as a scheme over $\mathrm{Spec}(\ZZ)$.

The same situation happens for the hyper-Pfaffian varieties. The set of equations in Theorem \ref{thm:HPfComponents} defines only $\KK$-points of the varieties. A description of ideals for the hyper-Pfaffians is a non-trivial problem which is related to an explicit description of the plethysms $\mathrm{Sym}^{k} \circ \bigwedgem{p}$.
\end{remark}

We recall from Example (3) in Section \ref{sec:MaxMinSIVar} that for any $(n,p)$-maximal $\SI$-variety $\XX$ the ideals defining components $\XX_{N,P}$ with $(N,P) \> (n,p)$ are pullbacks of the ideal for $\XX_{n,p}$. Therefore the statement of the theorem for $(n,p) \> (m(l-1), m)$ is clear if we prove the rest.  

To get other components of the hyper-Pfaffian variety $\HHPf^{(m,l)}$  we need some computations. Instead of a bulky technical proof in a general case, we exemplify internal strings of the proof technique via the elementary example $\HHPf^{(4,2)}$.

\begin{lemma}\label{SetTheoretical42}
Theorem \ref{thm:HPfComponents} holds for $\HHPf^{(4,2)}$. Explicitly, for any $(n,p)\not \> (4,4)$ we have the equality $\HHPf^{(4,2)}_{n, p} = \bigwedgem{p}V_{n, p}$.
\end{lemma}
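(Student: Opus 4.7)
The plan is to prove the lemma by lifting an arbitrary $\omega \in \bigwedgem{p}V_{n,p}$ to a larger component of $\HHPf^{(4,2)}$ and verifying membership there. The hypothesis $(n,p) \not\vartriangleright (4,4)$ cuts out precisely the pairs with $n<4$ or $p<4$ (the boundary pair $(4,4)$ itself, where the component is by construction the hypersurface $\{\hpf^{(4,2)}_{[4,4]}=0\}$, is necessarily excluded from the claim as stated).

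For any admissible $(n,p)$, I set $(N,P) := (\max(n,4),\max(p,4))$, so that $(N,P)$ is either $(4,4)$ or strictly $\vartriangleright(4,4)$, and I form the lift
\[
\tilde\omega := i^{N-n}(\omega) \wedge e_{p+1}\wedge\dots\wedge e_P \in \bigwedgem{P}V_{N,P}.
\]
The identities $i^\dagger\circ i=\id$ and $j^\dagger\circ j=\id$ ensure that iterating $i^\dagger$ and $j^\dagger$ returns $\tilde\omega$ to $\omega$. Hence, by the surjectivity part of the $\SI$-variety axioms, it suffices to show $\tilde\omega\in\HHPf^{(4,2)}_{N,P}$: this yields $\omega\in\HHPf^{(4,2)}_{n,p}$, and since $\omega$ was arbitrary the equality $\HHPf^{(4,2)}_{n,p}=\bigwedgem{p}V_{n,p}$ follows.

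To verify this membership I will use Corollary \ref{GeoHPf} and Proposition \ref{KeyMult}. When $P=4$, the component $\HHPf^{(4,2)}_{N,4}$ is cut out by the single nilpotency condition $\tilde\omega\wedge\tilde\omega=0$ in $\bigwedgem{8}V_{N,4}$. If $P>p$ (equivalently $p<4$), then $\tilde\omega$ has the factor $e_{p+1}\wedge\dots\wedge e_P$, which squares to zero and kills the wedge square. If $P=p=4$, necessarily $n<4$ and $\tilde\omega=i(\omega)$, so $\tilde\omega\wedge\tilde\omega=i(\omega\wedge\omega)$ lies in $\bigwedgem{8}V_{n,4}$, a space that vanishes because $n+4<8$. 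When $P>4$ (forcing $n<4$, $p>4$), the variety $\HHPf^{(4,2)}_{N,P}$ is cut out by the contracted hyper-Pfaffians $\hpf^{(4,2)}_{A|J}$, and Proposition \ref{KeyMult} identifies each $\hpf^{(4,2)}_{A|J}(\tilde\omega)$ with a coefficient of the wedge square of the $J$-slice $\tilde\omega_J\in\bigwedgem{4}V_{[N,P]\setminus J}$; the support analysis for $\tilde\omega$ forces each $\tilde\omega_J$ to live inside a subspace of dimension at most $n+4<8$, so every such wedge square vanishes.

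The main obstacle is the $P>4$ sub-case, where one has to translate the formal equations $\hpf^{(4,2)}_{A|J}$ into honest wedge-square statements via Proposition \ref{KeyMult} and keep careful track of the supports of the slices $\tilde\omega_J$. Once that translation is made, each slice-wise wedge-square vanishing is just a dimension count, and the $P=4$ case is essentially immediate from Corollary \ref{GeoHPf}.
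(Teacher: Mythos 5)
Your proposal is correct and follows essentially the same route as the paper: lift a point of $\bigwedgem{p}V_{n,p}$ to a component at or above $(4,4)$ by padding (the paper's zero-padded preimages of $j^{\dagger}$ and $i^{\dagger}$ are exactly your $j$- and $i$-lifts), then check that the lift satisfies the hyper-Pfaffian equations by a support argument. The only real difference is presentational: you verify membership via the nilpotency criterion of Corollary \ref{GeoHPf} and dimension counts, while the paper uses a coordinate-level observation (all nonzero coordinates of the lift share a common index, so every monomial of every $\hpf^{(4,2)}_{A}$ vanishes); these are equivalent, and your explicit handling of the $P>4$ slices is in fact more detailed than the paper's ``analogously''.
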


The following elementary observation is extremely useful for the proof of the lemma.

\begin{observation}
Assume that for a vector $v = \sum v_{I}e_{I} \in \bigwedgem{4}V_{n, 4}$ there exists $j \in [n, 4]$ such that $v_{I}=0$ if $ j \not \in I $. Then $v \in \HPf^{(4,2)}(V_{n,4}) = \bigcap\limits_{A\subseteq [n,4], |A| = 8}\; \{ \hpf^{(4,2)}_{A} = 0\}$.
\end{observation}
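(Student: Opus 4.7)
The plan is to reduce the verification to the geometric criterion of Corollary \ref{GeoHPf}, which characterizes $\HPf^{(4,2)}(V_{n,4})$ as the locus of $v \in \bigwedgem{4}V_{n,4}$ with $v \wedge v = 0$. The support hypothesis---that $v_{I}=0$ whenever $j \notin I$---expresses precisely that $v$ lies in the image of the multiplication-by-$e_{j}$ map, so there exists $w \in \bigwedgem{3}\langle e_{i}\rangle_{i \in [n,4]\setminus\{j\}}$ with $v = e_{j}\wedge w$. From this factorization the conclusion is immediate: $v \wedge v = (e_{j}\wedge w)\wedge(e_{j}\wedge w) = -\,e_{j}\wedge e_{j}\wedge w\wedge w = 0$, and Corollary \ref{GeoHPf} then forces $v \in \HPf^{(4,2)}(V_{n,4})$.

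An alternative route, staying at the level of coordinates, is to verify each defining hyper-Pfaffian equation of $\HPf^{(4,2)}(V_{n,4})$ directly. For any $8$-subset $A \subseteq [n,4]$ expand
\[
\hpf^{(4,2)}_{A}(v) \;=\; \sum_{I_{1}\sqcup I_{2} = A} \sgn(I_{1},I_{2})\, v_{I_{1}} v_{I_{2}},
\]
and split on whether $j \in A$. If $j \notin A$, then in every partition $A = I_{1}\sqcup I_{2}$ neither part contains $j$, so $v_{I_{1}} = v_{I_{2}} = 0$ by hypothesis. If $j \in A$, then in every partition exactly one of $I_{1},I_{2}$ avoids $j$, and the corresponding coefficient vanishes. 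In either case every term is zero, so $\hpf^{(4,2)}_{A}(v) = 0$, which by the set-theoretic description in Theorem \ref{thm:HPfComponents} is precisely the defining condition of $\HPf^{(4,2)}(V_{n,4})$.

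Neither route presents a genuine obstacle: the entire observation is a repackaging of the identity $e_{j}\wedge e_{j} = 0$. The only points that require a word of justification are the existence of the factorization $v = e_{j}\wedge w$ (a direct consequence of the support hypothesis, by splitting each basis vector $e_{I}$ with $j \in I$ as $\pm\, e_{j}\wedge e_{I\setminus\{j\}}$) and the identification of $\HPf^{(4,2)}(V_{n,4})$ with the common vanishing locus of the forms $\hpf^{(4,2)}_{A}$, which is recorded in Theorem \ref{thm:HPfComponents}.
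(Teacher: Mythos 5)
Your second (coordinate) argument is exactly the paper's proof: in any partition $A = I_{1}\sqcup I_{2}$ into disjoint $4$-sets at most one part can contain $j$, so every monomial $v_{I_1}v_{I_2}$ of $\hpf^{(4,2)}_{A}(v)$ has a factor indexed by a set omitting $j$ and hence vanishes. The first route, factoring $v = e_{j}\wedge w$ and invoking $v\wedge v=0$ together with Corollary \ref{GeoHPf}, is a correct and equivalent repackaging of the same fact.
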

\begin{proof}
The observation follows from the fact that for $\hpf^{(4;2)}_{A}(v)$ to be nontrivial, we need at least two nonzero coordinates of $v$ with non-intersecting indices (this is wrong under the assumption for the element~$j$).
\end{proof}

\begin{proof}[Proof of Lemma \ref{SetTheoretical42}] We begin with the case $n = 4, p=3$. The map $j_{4, 3}^{\dagger}: \bigwedgem{4}V_{4, 4} \rightarrow \bigwedgem{3}V_{4, 3}$ in the coordinate form is given by 
    $$e_{I} \mapsto 
    \begin{cases} 
    (-1)^{\sgn(I, 4)} e_{I}\backslash e_{4} &\text{ if }4 \in I;\\
    0 &\text{ otherwise.}
    \end{cases}$$
    For an arbitrary point $(a_{J}) \in \bigwedgem{3}V_{4, 3}$, the point $(A_{I}) \in \bigwedgem{4}V_{4, 4}$, where
    $$ A_{I}=
    \begin{cases}
    a_{J} &\text{ if }I=J\cup 4;\\
    0 &\text{ otherwise},
    \end{cases}$$
    belongs to the preimage of $(a_{J})$ under $j_{4, 3}^{\dagger}$. Applying the observation for $n =4$, $v = (A_{I})$ and $j = 4$, we see that $(A_{I})$ belongs to the hyper-Pfaffian $\HPf^{(4,2)}(V_{4,4})$. 
    
    Therefore we proved that the variety $\HPf^{(4,2)}(V_{4, 4})$ maps surjectively to $\bigwedgem{3}V_{4, 3}$, i.e., $\HHPf^{(4, 2)}_{4, 3} = \bigwedgem{3}V_{4, 3}$. And, more generally, $\HHPf^{(4,2)}_{4, p} = \bigwedgem{p}V_{4, p}$ for $p\leqslant 3$.
    
    Consider the case of a general $n\geqslant 4$ and $p=3$. Again, for any $(a_{I}) \in \bigwedgem{3}V_{n, 3}$ we can consider the point $(A_{I}) \in \bigwedgem{4}V_{n,4}$ where 
    $$ A_{I}=
    \begin{cases}
    a_{J} &\text{ if }I=J\cup n;\\
    0 &\text{ otherwise}.
    \end{cases}$$
    Applying the observation to the case $v = (A_{I})$ and $j = n$, we can see that the point $(A_{I})$ belongs to the variety $\HPf^{(4;2)}(V_{n, 4}) = \HHPf^{(4,2)}_{n, 4} $. So the projection $\HHPf^{(4,2)}_{n, 4} \rightarrow \bigwedgem{3}V_{n, 3}$ is surjective, i.e., $\HHPf^{(4,2)}_{n,p} = \bigwedgem{p}V_{n,p}$ for any $n \geqslant 0$ and $p \leqslant 3$.
    
    We note that the map $i^{\dagger}_{3,4}: \bigwedgem{4}V_{4,4} \rightarrow \bigwedgem{4}V_{3,4}$ in the coordinate form is given by 
    $$e_{I} \mapsto 
    \begin{cases} 
    e_{I} &\text{ if } -4 \not\in I;\\
    0 &\text{ otherwise.}
    \end{cases}$$
    The same technique with the map $i^{\dagger}_{3,4}$ as with $j^{\dagger}_{4,3}$ gives the equality $\HHPf^{(4,2)}_{3,4} = \bigwedgem{4}V_{3,4}$. So $\HHPf^{(4,2)}_{n,4} = \bigwedgem{4}V_{n,4}$ for $n \leqslant 3$. Analogously, for any $(n,p)$ with $n \leqslant 3$ and $p \geqslant 4$ we get the desired equality.
\end{proof}

\subsection{Hyper-Pfaffians and \texorpdfstring{$\GL$}{TEXT}-orbits in exterior powers}

In this section we talk about $\GL$-orbits in the exterior powers. This description is crucial for Theorem \ref{thm:InclToHPf}.

Consider the exterior power $\bigwedgem{p}V$, where $V$ is a $\KK$-vector space of sufficiently high dimension (without loss of generality, we can assume $V = \KK^{\infty}$). One can ask about classification of $\GL(V)$-orbits in this space. Theorem 6 of \cite{DraismaSnowdenEtAl20} implies that such orbits are related to the decomposition type of tensors inside the exterior power $\bigwedgem{p}V$. 

In detail, let $\pi = (\pi_{1} \geqslant \pi_{2} \geqslant \dots)$ be a partition of $p$ and $k$ be a natural number. We call an element $\omega \in \bigwedgem{p}V$ of type $(\pi, k)$ if $\omega$ is equal to a sum of $k$ elements of the space $\bigwedgem{\pi_1}V \wedge \bigwedgem{\pi_2}V \wedge \dots$ and such $k$ is minimal:
$$(\bigwedgem{p}V)_{\pi, k} = \left\{\omega \in \bigwedgem{p}V:\; \omega = \sum\limits_{i = 1\dots k} \omega_{i}, \;\omega_i \in \bigwedgem{\pi_1}V \wedge \bigwedgem{\pi_2}V \wedge \dots, k\text{ is minimal} \right\}.$$

For example, an element $\omega \in \bigwedgem{p}V$ is of type $((p, 0, \dots), 1)$ if and only if $\omega$ is decomposable. So the set of elements of type $((p, 0, \dots), 1)$ coincides with the Grassmannian $\Gr(p, V)$, which is $\GL(V)$-invariant. The general statement is the following. 

\begin{proposition}\label{prop:Orbits} \cite{DraismaSnowdenEtAl20}
Any $\GL(V)$-invariant algebraic subvariety of $\bigwedgem{p}V$ is contained in a Zariski closure of one of the spaces $(\bigwedgem{p}V)_{\pi, k}$, where $\pi = (\pi_{1} \geqslant \pi_{2} \geqslant \dots)$ is a partition of $p$ and the latter space consists of all elements that can be represented as a sum of $k$ elements from $\bigwedgem{\pi_1}V \wedge \bigwedgem{\pi_2}V \wedge \dots$.
\end{proposition}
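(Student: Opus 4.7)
The plan is to deduce the statement from a stratification of $\bigwedgem{p}V$ by the partition-rank invariant, combined with the observation that Zariski closures of the strata are $\GL(V)$-invariant. First, for each partition $\pi=(\pi_1\geqslant\pi_2\geqslant\cdots)$ of $p$ and each positive integer $k$, consider the addition map
\[
\phi_{\pi,k}\colon \bigl(\bigwedgem{\pi_1}V\wedge\bigwedgem{\pi_2}V\wedge\cdots\bigr)^{k}\longrightarrow\bigwedgem{p}V,\qquad (\omega_{1},\ldots,\omega_{k})\mapsto \omega_{1}+\cdots+\omega_{k}.
\]
Its image $S_{\pi,k}$ is constructible (by Chevalley on each finite-dimensional truncation), and its Zariski closure $Z_{\pi,k}$ is a closed $\GL(V)$-invariant subvariety because $\phi_{\pi,k}$ is $\GL(V)$-equivariant. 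By definition $(\bigwedgem{p}V)_{\pi,k}=S_{\pi,k}\setminus S_{\pi,k-1}$, and $\bigwedgem{p}V=\bigcup_{\pi,k}S_{\pi,k}$ since every element has a finite expansion in the monomial basis.

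Next, I would argue on irreducible components. Since $\GL(V)$ is (ind-)connected, each irreducible component of a $\GL(V)$-invariant closed subvariety $Y\subseteq \bigwedgem{p}V$ is itself $\GL(V)$-stable. Fix an irreducible component $Y'$ and any partition $\pi$ of $p$. The function $\omega\mapsto \min\{k:\omega\in S_{\pi,k}\}$ is upper-semicontinuous in $\omega$ (it is a rank-type invariant controlled by the non-vanishing of appropriate minors of a universal multilinear parametrization), so it attains its minimum value $k'_{\pi}$ on a non-empty Zariski-open subset $U\subseteq Y'$. Then $U\subseteq S_{\pi,k'_{\pi}}$, whence $Y'=\overline{U}\subseteq Z_{\pi,k'_{\pi}}$.

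For $Y$ with several components, I would combine the previous step with a directedness property of the stratification: among the $Z_{\pi,k}$ one has $Z_{\pi,k}\subseteq Z_{\pi',k'}$ whenever $\pi$ dominates $\pi'$ and $k'$ is chosen sufficiently large, so the poset of such closures is upward-directed and the finitely many component-wise types admit a common upper bound $(\pi_{0},k_{0})$. This yields $Y\subseteq Z_{\pi_{0},k_{0}}=\overline{(\bigwedgem{p}V)_{\pi_{0},k_{0}}}$, which is the desired conclusion.

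The principal obstacle is to verify the upper-semicontinuity and directedness claims rigorously in the infinite-dimensional setting $V=\KK^{\infty}$, where $\bigwedgem{p}V$ is an ind-variety rather than a finite-type variety. This is handled by restricting everything to the finite-dimensional subspaces $V_{n}$, applying standard rank-stratification theory on each $\bigwedgem{p}V_{n}$, and then passing to the direct limit: any given $\omega\in\bigwedgem{p}V$, any single component of $Y$, and any finite set of equations witnessing the type live inside some $\bigwedgem{p}V_{n}$ for $n\gg 0$, so the uniform statement for $V=\KK^{\infty}$ follows by the universal property of the limit.
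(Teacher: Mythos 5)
The paper does not prove this proposition at all: it is imported from the unpublished reference \cite{DraismaSnowdenEtAl20}, so there is no in-paper argument to compare yours with, and your proposal must stand on its own. It does not: it contains two false intermediate claims. First, the semicontinuity step fails. For $p\geqslant 3$ the images $S_{\pi,k}$ are in general not closed (this is exactly the gap between rank and border rank for higher-order tensors), so the function $\omega\mapsto\min\{k:\omega\in S_{\pi,k}\}$ is not cut out by minors and is not semicontinuous; moreover every invariant subvariety here is a cone, so the \emph{minimum} of this function on $Y'$ is attained only at the origin and certainly not on a dense open set. One could try to repair this by replacing ``minimum'' with ``generic value'', aiming at $Y'\subseteq Z_{\pi,k}$ for the generic $k$. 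But the repaired claim is still false with your quantifiers: you assert that for \emph{every} irreducible invariant $Y'$ and \emph{every} partition $\pi$ some finite $k$ works. Take $V=\KK^{\infty}$, $Y'=\bigwedgem{p}V$ itself, and $\pi=(1,\dots,1)$: each $Z_{(1,\dots,1),k}$ is the $k$-th secant variety of the cone over the Grassmannian, a \emph{proper} closed subvariety for every finite $k$ (any sum of $k$ decomposables $\omega$ satisfies $\omega^{\wedge(k+1)}=0$, while a suitable sum of $k+1$ of them does not), and yet $\bigwedgem{p}V=\bigcup_{k}S_{(1,\dots,1),k}$. So an irreducible invariant ind-variety can be a countable union of the strata without lying in any single $Z_{\pi,k}$; the Baire-type argument that rescues this in finite dimensions is precisely what breaks when passing to the limit, because the generic rank on $\bigwedgem{p}V_{n}$ grows with $n$. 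The same phenomenon defeats your directedness claim: for $\dim V=\infty$ one has, e.g., $Z_{(2,1),1}\not\subseteq Z_{(1,1,1),k'}$ for any finite $k'$, since the border rank of $\omega\wedge v$ grows with the rank of $\omega\in\bigwedgem{2}V$ (visible on the flattening $V^{*}\to\bigwedgem{2}V$).

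The actual theorem has the opposite quantifier structure: given $Y$, one must produce a \emph{single} pair $(\pi,k)$ adapted to $Y$, and the substance is that when $Y$ is proper this pair can be chosen with $Z_{\pi,k}$ proper (for $\pi=(p)$, $k=1$ the containment is vacuous, which is the only case your argument reliably delivers). This is the main structure theorem for $\GL$-invariant subvarieties of polynomial functors of Bik--Draisma--Eggermont--Snowden; its proof goes through Draisma's topological noetherianity of polynomial functors and a shift/embedding argument, not through an elementary rank stratification. Your setup correctly identifies the target sets $Z_{\pi,k}$, but the mechanism that forces an invariant subvariety into one of them is missing.
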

\begin{remark}
The result of \cite{DraismaSnowdenEtAl20} is far more general: the analogous statement is true for $\GL(V)$-invariant subvarieties of the space $\mathbb{S}_{\mu}V$, where $\mathbb{S}_{\mu}$ is the Schur functor for a partition $\mu$. The analogous result for Proposition \ref{prop:Orbits} in the case $\mathbb{S}_{\mu} = \Sym^{k}$ is proved in \cite{BikDraismaEggermont19}.
\end{remark}

\begin{definition}
We call a partition $\pi = (\pi_{1} \geqslant \dots \geqslant \pi_{j})$ \textit{even} if all of the parts $\pi_{i}$ are even numbers. Otherwise we call it \textit{odd}. 
\end{definition}

\begin{proposition}\label{OddPartition}
For any odd partition $\pi$ and any $k \in \NN$, the set $(\bigwedgem{p}V)_{\pi, k}$ (as well as its Zariski closure) is contained in the varieties $\HPf^{(p,k+1)}(V)$ and $\HPf^{(\dim V - p,k+1), \star}(V^*)$:
$$(\bigwedgem{p}V)_{\pi, k} \subseteq \HPf^{(p,k+1)}(V) \text{ and }(\bigwedgem{p}V)_{\pi, k} \subseteq \HPf^{(\dim V - p,k+1), \star}(V^{*}).$$
\end{proposition}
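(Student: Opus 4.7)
The plan is to reduce both inclusions to one elementary vanishing. By Corollary \ref{GeoHPf} it suffices to show $\omega^{\wedge(k+1)} = 0$ and $(\star\omega)^{\wedge(k+1)} = 0$ for every $\omega = \omega_1 + \dots + \omega_k$ with each $\omega_i = \alpha_{i,1} \wedge \dots \wedge \alpha_{i,j}$ and $\alpha_{i,a} \in \bigwedgem{\pi_a}V$. The key lemma is that when some $\pi_a$ is odd, every such pure summand satisfies $\omega_i \wedge \omega_i = 0$: in characteristic zero, graded commutativity gives $\alpha \wedge \alpha = -\alpha \wedge \alpha$ for any $\alpha \in \bigwedgem{\pi_a}V$ with $\pi_a$ odd, so $\alpha_{i,a} \wedge \alpha_{i,a} = 0$; then in $\omega_i \wedge \omega_i = \alpha_{i,1} \wedge \dots \wedge \alpha_{i,j} \wedge \alpha_{i,1} \wedge \dots \wedge \alpha_{i,j}$ one moves the second copy of $\alpha_{i,a}$ next to the first via graded commutativity and extracts this zero factor.

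For the first containment I would expand
\[
\omega^{\wedge(k+1)} = \sum_{(i_1,\dots,i_{k+1}) \in \{1,\dots,k\}^{k+1}} \omega_{i_1} \wedge \dots \wedge \omega_{i_{k+1}},
\]
noting that the sign rule is uniform because every $\omega_i$ has the common total degree $p$. Pigeonhole forces every index tuple to repeat some value; graded commutativity then brings two equal $\omega_i$'s adjacent, and the key lemma kills the term. Hence $\omega^{\wedge(k+1)} = 0$, so $\omega \in \HPf^{(p,k+1)}(V)$, and the containment passes to the Zariski closure of $(\bigwedgem{p}V)_{\pi,k}$ because $\HPf^{(p,k+1)}(V)$ is cut out by the polynomial equations $\hpf^{(p,k+1)}_A(\omega,\dots,\omega) = 0$.

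For the second containment the idea is to run the same pigeonhole argument on $\star\omega = \sum_i \star\omega_i$, which requires $(\star\omega_i) \wedge (\star\omega_i) = 0$ for each summand. The cleanest route is to observe that $\star$ sends a pure tensor of type $\pi$ into (a sum of) pure tensors of a complementary type $\pi'$ on $V^{*}$ whose partition still carries an odd part inherited from $\pi$; applying the key lemma on the dual side gives $(\star\omega_i) \wedge (\star\omega_i) = 0$, after which pigeonhole delivers $(\star\omega)^{\wedge(k+1)} = 0$ and $\omega \in \HPf^{(\dim V - p, k+1), \star}(V^{*})$ by Corollary \ref{GeoHPf}. The main obstacle is precisely this dual-type step: unlike the first part, the action of the Hodge star on pure tensors of type $\pi$ is not transparent and requires an explicit choice of complementary subspaces together with a careful bookkeeping argument to confirm that the dual partition $\pi'$ of $\dim V - p$ indeed retains an odd part. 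Once this is in place, extending the containment to the Zariski closure is routine, since $\HPf^{(\dim V - p, k+1), \star}(V^{*})$ is again cut out by polynomial equations.
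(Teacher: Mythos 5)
Your argument for the first containment is exactly the paper's: each pure summand $\omega_i$ kills itself because it carries a factor of odd degree, the pigeonhole principle forces a repeated factor in every term of $\omega^{\wedge(k+1)}$, and Corollary \ref{GeoHPf} together with the closedness of $\HPf^{(p,k+1)}(V)$ finishes the job, including the passage to the Zariski closure. That half is complete and correct.

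For the second containment you have correctly located the crux: one needs $(\star\omega_i)\wedge(\star\omega_i)=0$ for each pure summand, i.e.\ that the Hodge dual of a product with an odd-degree factor again squares to zero. You are also right that this is where the work is --- the paper offers nothing here beyond the sentence ``for the dual variety proof is analogous.'' But the step is not a matter of careful bookkeeping; it is false in the stated generality. Take $V=\KK^{8}$, $\pi=(3,3)$, $k=1$, and
\[
\omega=\bigl(e_{1}\wedge e_{2}\wedge e_{3}+e_{4}\wedge e_{5}\wedge e_{6}\bigr)\wedge\bigl(e_{7}\wedge e_{8}\wedge(e_{1}+e_{4})\bigr)=e_{123478}-e_{145678}\in(\bigwedgem{6}V)_{(3,3),1}.
\]
Here $\star\omega=\pm e^{*}_{56}\mp e^{*}_{23}$, so $(\star\omega)\wedge(\star\omega)=\mp 2\,e^{*}_{2356}\neq 0$, i.e.\ $\omega\notin\HPf^{(2,2),\star}$ even though $(3,3)$ is an odd partition and $k=1$. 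The underlying reason is that $\star$ does not preserve the decomposition type: $\omega$ above is a single product of two odd-degree factors, while $\star\omega$ is a sum of two decomposables and no fewer, so the number of summands $k$ can grow under $\star$. Consequently the dual containment cannot be proved as stated; it needs either extra hypotheses --- for instance, in the situation actually used in Theorem \ref{thm:InclToHPf} the partition is $\Lambda\cup 1\cup 1$, so each $\omega_i$ is divisible by a decomposable bivector, hence $\star\omega_i$ lies in $\bigwedgem{\dim V-p}$ of a codimension-$2$ subspace of $V^{*}$ and $(\star\omega_i)^{\wedge 2}=0$ provided $\dim V>2p-2$ --- or a larger exponent than $k+1$ on the dual side. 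Your instinct that the ``dual-type step'' is the real difficulty was sound; the gap you flagged is genuine and is present in the paper's own proof as well.
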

\begin{proof}
%
We prove the first inclusion only; for the dual variety proof is analogous.

Without loss of generality, we assume that $\pi_{1}$ is odd. Then for any $\alpha \in (\bigwedgem{p}V)_{\pi, k}$ we know that $\alpha \wedge \alpha = 0$ Indeed, $\alpha \wedge \alpha  = \alpha_{1} \wedge \alpha_{1} \wedge \dots $ and $\alpha_{1} \wedge \alpha_{1} = 0$ because $\alpha_{1}$ belongs to $\bigwedgem{\pi_1}V$ with odd $\pi_{1}$. Therefore for any $\beta = \beta^{(1)} + \dots + \beta^{(k)} \in (\bigwedgem{p}V)_{\pi, k}$ by the pigeonhole principle $\beta^{\wedge (k+1)} = 0$. This equality is equivalent to the desired inclusion due to the geometrical description of the hyper-Pfaffian (Corollary \ref{GeoHPf}).

Hyper-Pfaffian and dual hyper-Pfaffian varieties are closed, so we also have the inclusions $\overline{(\bigwedgem{p}V)_{\pi, k}} \subseteq \HPf^{(p,k+1)}(V)$ and $\overline{(\bigwedgem{p}V)_{\pi, k}} \subseteq \HPf^{(\dim V - p,k+1), \star}(V^{*})$ for the Zariski closures.
\end{proof}

\subsection{Any proper Plücker variety is a subset of a two-sided hyper-Pfaffian}

\begin{theorem} \label{thm:InclToHPf}
For any proper $\SI$-variety $\XX$ there exist natural numbers $m,l,r,s$ such that $\XX_{\infty}$ is a closed $\GGL$-stable subset of $\HHPf^{(m,l), (r,s)}_{\infty}$.
\end{theorem}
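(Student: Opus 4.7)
The plan is to realize the $\SI$-variety $\XX$ inside $\HHPf^{(m,l)}$ and $\HHPf^{(r,s),\star}$ separately and then intersect. The key observation is that, because of the maximality built into the hyper-Pfaffian $\SI$-varieties, a containment at a single well-chosen component propagates to all components, reducing the problem to a single-level statement.

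\textbf{Step 1 (Propagation principle).} I would first prove a propagation lemma: if $\XX_{n_0, p_0} \subseteq \HHPf^{(m,l)}_{n_0, p_0}$ for some $(n_0, p_0)$ with $n_0 \geqslant m(l-1)$ and $p_0 \geqslant m$, then $\XX_{n, p} \subseteq \HHPf^{(m,l)}_{n, p}$ for every $(n, p)$, whence $\XXX \subseteq \HHPf^{(m,l)}_\infty$. Descending to smaller $(n,p)$ is immediate from the $\SI$-structure of both varieties: the surjections $i^\dagger$ and $j^\dagger$ take $\XX_{n_0, p_0}$ onto $\XX_{n, p}$ and $\HHPf^{(m,l)}_{n_0, p_0}$ into $\HHPf^{(m,l)}_{n, p}$. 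Ascending proceeds by induction on $(n, p)$ using $(m(l-1),m)$-maximality: membership in $\HHPf^{(m,l)}_{n, p}$ is characterized as all $\GL$-translates of $i^\dagger$ and $j^\dagger$ mapping into $\HHPf^{(m,l)}$ at strictly smaller coordinates, and this holds by the inductive hypothesis together with the $\SI$-structure (and $\GL$-invariance) of $\XX$.

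\textbf{Step 2 (Single-level containment for $\HHPf^{(m,l)}$).} Since $\XX$ is proper, surjectivity of the $\dagger$-maps forces $\XX_{n,p}$ to be a proper subvariety of $\bigwedgem{p}V_{n,p}$ for all sufficiently large $(n,p)$. I would then pick an odd integer $m$ such that $\XX_{n_0, m}$ is proper for some $n_0$. By Proposition \ref{prop:Orbits} applied to $\XX_{n_0, m}$, there exist a partition $\pi$ of $m$ and $k \in \NN$ with $\XX_{n_0, m} \subseteq \overline{(\bigwedgem{m}V_{n_0, m})_{\pi, k}}$. Since $m$ is odd, every partition of $m$ contains at least one odd part, so $\pi$ is odd, and Proposition \ref{OddPartition} yields $\XX_{n_0, m} \subseteq \HPf^{(m, k+1)}(V_{n_0, m})$. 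Setting $l = k+1$ and enlarging $n_0$ to ensure $n_0 \geqslant m(l-1)$, Theorem \ref{thm:HPfComponents} identifies the right-hand side with $\HHPf^{(m,l)}_{n_0, m}$; the propagation principle then gives $\XXX \subseteq \HHPf^{(m,l)}_\infty$. A subtlety: the exponent $k$ may in principle shift as $n_0$ grows to meet the constraint $n_0 \geqslant m(l-1)$, so it is cleanest to apply Proposition \ref{prop:Orbits} directly to the $\GGL$-invariant union $\bigcup_n i(\XX_{n, m}) \subseteq \bigwedgem{m}V_\infty$ to fix a uniform $k$ once and for all.

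\textbf{Step 3 (Dual side and intersection).} A symmetric argument, now fixing $p_0$ odd with $\XX_{n_0, p_0}$ proper, extracting $(\pi, k)$ from Proposition \ref{prop:Orbits}, and applying the second inclusion of Proposition \ref{OddPartition}, yields $\XX_{n_0, p_0} \subseteq \HPf^{(n_0, k+1), \star}(V_{n_0, p_0})$. Setting $(r, s) = (n_0, k+1)$ and enlarging $p_0$ so that $p_0 \geqslant r(s-1)$, the analogue of the propagation principle for $\HHPf^{(r,s), \star}$ (which is $(r, r(s-1))$-maximal) gives $\XXX \subseteq \HHPf^{(r,s), \star}_\infty$. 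Intersecting with Step 2 yields $\XXX \subseteq \HHPf^{(m,l)}_\infty \cap \HHPf^{(r,s), \star}_\infty = \HHPf^{(m,l),(r,s)}_\infty$.

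The principal obstacle is Step 1, and specifically the ascending induction: it requires carefully unpacking what maximality means in terms of $\GL$-orbits of the transition maps and using the $\GL$-invariance of $\XX$ to match these orbits on the two sides. A secondary difficulty is arranging the parity constraint (forcing $\pi$ odd) and the dimension constraint $n_0 \geqslant m(l-1)$ simultaneously without the exponent $k$ drifting -- again, the cleanest fix is to work inside $\bigwedgem{m}V_\infty$ from the outset.
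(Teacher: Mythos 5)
Your overall architecture (establish a containment at one component, propagate it to the limit via the maximality in the definition of the hyper-Pfaffian $\SI$-varieties, then intersect the primal and dual sides) matches the paper's. But Step 2 contains a genuine gap that breaks the argument: you force the partition $\pi$ to be odd by choosing the exterior degree $m$ itself to be odd. The hyper-Pfaffian form $\hpf^{(m,l)}$ is only defined (as a nonzero symmetric form) for \emph{even} width $m=2m_1$; for odd $m$ the signed sum over unordered partitions vanishes identically (Remark \ref{Root}(2)), equivalently $v\wedge v=0$ for every $v\in\bigwedgem{m}V$ with $m$ odd, so $\HPf^{(m,l)}(V)=\bigwedgem{m}V$ for all $l\geqslant 2$ by Corollary \ref{GeoHPf}. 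The inclusion you extract from Proposition \ref{OddPartition} is therefore vacuously true, $\HHPf^{(m,l)}$ is the ambient $\SI$-variety, $\HHPf^{(m,l)}_{\infty}=\WedgeDU$, and Theorem \ref{thm:HPfNoeth} no longer gives you anything (its proof requires the forms $\hpf^{(m,l)}$ to be nonzero; applied to the ambient variety it would be circular, since the noetherianity of $\WedgeDU$ is exactly what is being proved). The same parity problem recurs on the dual side, where you need the co-width $r=n_0$ to be even but do not arrange this.

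The missing idea is how to make the partition odd while keeping the exterior degree (and co-degree) even. The paper takes a minimal pair $(N,P)$ with $N,P$ both even and $\XX_{N,P}$ proper, obtains $\XX_{N,P}\subseteq\overline{(\bigwedgem{P}V_{N,P})_{\Lambda,K}}$ from Proposition \ref{prop:Orbits}, and then passes to the component $\XX_{N,P+2}$ via $\omega\mapsto\omega\wedge e_{P+1}\wedge e_{P+2}$: this replaces $\Lambda$ by $\Lambda\cup 1\cup 1$, which is odd \emph{regardless} of the parity of $\Lambda$, while the degree $P+2$ and co-degree $N$ remain even. Proposition \ref{OddPartition} then gives a nontrivial containment into $\HPf^{(P+2,K+1)}$ and $\HPf^{(N,K+1),\star}$, and your propagation step (which is correct, and is what the paper invokes when it appeals to maximality) finishes the proof. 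Your instinct to stabilize the exponent $K$ by working in $\bigwedgem{m}V_{\infty}$ is reasonable but secondary; the parity issue is the real obstruction.
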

\begin{proof} Closedness and $\GGL$-invariancy follow from the definition of $\XXX$, so we prove the inclusion $\XXX \subseteq \HHPf^{(m,l), (r,s)}_{\infty}$ only. The idea for the proof is to combine Proposition \ref{prop:Orbits} and \ref{OddPartition}. 

Namely, let $(N, P)$ be a minimal pair such that $\XX_{N, P} \varsubsetneq \bigwedgem{P}V_{N, P}$ and $N, P$ are even. Then by Proposition \ref{prop:Orbits} the variety $\XX_{N, P}$ is contained in Zariski closure of $(\bigwedgem{P}V_{N, P})_{(\Lambda, K)}$ for some $\Lambda$ and $K$. Then $\XX_{N, P+2}$ is contained in the variety $\overline{(\bigwedgem{P+2}V_{N, P+2})_{\Lambda \cup 1 \cup 1, K}}$, where $\Lambda\cup 1\cup 1$ is the partition formed out of $\Lambda$ and two 1's:

\begin{center}
\begin{tikzcd}
  \bigwedgem{P}V_{N,P} \arrow[rr, hook, "\_\wedge e_{P+1}" ] & & \bigwedgem{P+1}V_{N,P+1} \arrow[rr, hook, "\_\wedge e_{P+2}"]  & & \bigwedgem{P+2}V_{N,P+2} \\
  \overline{(\bigwedgem{P}V_{N,P})_{\Lambda, K}} \arrow[draw=none]{u}[sloped,auto=false]{\subseteq} \arrow[rrrr, hook, "\_\wedge e_{P+1}\wedge e_{P+2}"]  & &   & & \overline{(\bigwedgem{P+2}V_{N,P+2})_{\Lambda\cup 1 \cup 1, K}} \arrow[draw=none]{u}[sloped,auto=false]{\subseteq}\\
  \XX_{N, P} \arrow[draw=none]{u}[sloped,auto=false]{\subseteq} \arrow[rrrr] & &  & & \XX_{N, P+2} \arrow[draw=none]{u}[sloped,auto=false]{\subseteq} 
\end{tikzcd}
\end{center}

The partition $\Lambda\cup 1\cup 1$ is odd (regardless of the $\Lambda$'s parity), so by Proposition \ref{OddPartition} we get the chain of inclusions $$\XX_{N, P+2} \subseteq \overline{(\bigwedgem{P+2}V_{N,P+2})_{\Lambda\cup 1 \cup 1, K}} \subseteq \HHPf^{(P, K+1)}_{N, P+2}.$$
This inclusion and the (maximality in the) definition of hyper-Pfaffian $\SI$-varieties imply the inclusion of the limiting varieties: 
$$\XX_{\infty} \subseteq \HHPf_{\infty}^{(P, K+1)}.$$

For the dual hyper-Pfaffians, we have $\XXX \subseteq \HHPf_{\infty}^{(N, K+1), \star}$. Finally, $\XXX \subseteq \HHPf_{\infty}^{(P, K+1),(N, K+1)}$.
\end{proof}


\subsection{Hyper-Pfaffian filtration}\label{GeneralFiltration}
First, we prove an elementary lemma about inclusions for hyper-Pfaffian varieties.

\begin{lemma}\label{lemma:filtration}
Consider a vector space $V$. Then we have an inclusion 
$$\HPf^{(m,l)}(V) \subseteq \HPf^{(m, l+1)}(V).$$
\end{lemma}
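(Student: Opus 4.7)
The plan is to reduce this inclusion to the geometric description of hyper-Pfaffian varieties given in Corollary \ref{GeoHPf}, which states
\[
\HPf^{(m,l)}(V) = \{v \in \bigwedgem{m}V : v^{\wedge l} = 0\}.
\]
Once this characterization is in hand, the inclusion is essentially tautological: having nilpotency degree at most $l$ in the exterior algebra $\bigwedge V$ implies having nilpotency degree at most $l+1$.

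Concretely, I would take $v \in \HPf^{(m,l)}(V)$, apply Corollary \ref{GeoHPf} to conclude $v^{\wedge l} = 0$ in $\bigwedgem{ml}V$, and then observe that
\[
v^{\wedge(l+1)} = v^{\wedge l} \wedge v = 0 \wedge v = 0,
\]
so that $v \in \HPf^{(m,l+1)}(V)$ by the same corollary. There is no obstacle to overcome: the nontrivial content of the lemma is already bundled into the geometric reformulation proved earlier, and the elementary calculation above finishes the argument. (If one wished to avoid the geometric description, one could argue directly from Proposition \ref{KeyMult}, expanding $v^{\wedge(l+1)}$ in the basis $\{e_A\}$ and using that the coefficients $\hpf^{(m,l+1)}_A(v,\ldots,v)$ can be written as sums of products of $\hpf^{(m,l)}$-forms evaluated at $v$; but invoking Corollary \ref{GeoHPf} is far cleaner.)
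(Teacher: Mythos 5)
Your proof is correct, and it is not the route the paper takes. The paper argues purely algebraically: it writes down the Laplace-type expansion
$$\hpf^{(m,l+1)}_{A}({\bf x}) = \sum_{I \ni 1} \pm\, x_{I}\cdot \hpf^{(m,l)}_{A\backslash I}({\bf x}),$$
summing over the $m$-subsets $I$ of $A$ containing a fixed element, which exhibits each degree-$(l+1)$ hyper-Pfaffian form as an element of the ideal generated by the degree-$l$ forms. Your argument instead funnels everything through Corollary \ref{GeoHPf} and the one-line observation $v^{\wedge(l+1)} = v^{\wedge l}\wedge v$. Both are valid and non-circular (Corollary \ref{GeoHPf} depends only on Proposition \ref{KeyMult}, which precedes this lemma), and yours is arguably the cleaner way to get the stated set-theoretic inclusion, which is all the lemma claims and all the paper ever uses at the level of $\KK$-points. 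What the paper's expansion buys in exchange is a slightly stronger, ideal-theoretic statement, and --- more to the point --- the identity itself is reused essentially verbatim in Step 4 of the proof of Theorem \ref{thm:HPfNoeth}, where $\hpf^{(m,l+1)}_{\mathcal{I}} = x_I\cdot\hpf^{(m,l)}_{\mathcal{I}\backslash I} + Q$ is solved for $x_I$; so the paper's choice of proof doubles as a lemma needed later, whereas your geometric argument does not.
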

\begin{proof}
For any set $A$ of cardinality $m(l+1)$ we have the equality
$$\hpf^{(m,l+1)}_{A}({\bf x}) = \sum\limits_{I \ni 1} \pm\, x_{I} \cdot \hpf^{(m, l)}_{A\backslash I}({\bf x}),$$
where the summation runs over all sets $I \in \bigwedgem{m}(A)$ containing the element $1$. This equality proves the desired inclusion of varieties.
\end{proof}

\begin{proposition}\label{prop:filtration}
For any finite dimensional vector space $V$ we have the exhaustive separable filtration
$$ \{0\} = \HPf^{(m, 1)}(V) \subseteq \dots \subseteq \HPf^{(m, l)}(V) \subseteq \HPf^{(m,l+1)}(V) \subseteq \dots \subseteq \bigwedgem{m}V.$$
\end{proposition}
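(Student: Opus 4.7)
The plan is straightforward: the statement bundles together three separate claims --- that we have a chain of inclusions, that the smallest term equals $\{0\}$ (the ``separable'' part), and that the chain eventually reaches the full ambient space (the ``exhaustive'' part) --- and I would verify each in turn.

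First, the chain of inclusions $\HPf^{(m,l)}(V) \subseteq \HPf^{(m,l+1)}(V)$ is precisely the content of Lemma \ref{lemma:filtration}, so nothing new is required at this step.

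For the bottom of the filtration I would appeal to the geometric description given by Corollary \ref{GeoHPf}. By that corollary, $\HPf^{(m,1)}(V) = \{v \in \bigwedgem{m}V : v^{\wedge 1} = 0\} = \{0\}$. (One can also see this combinatorially: on a set $A$ of cardinality $m$ the only unordered partition into $m$-subsets is the trivial one $A = A$, so $\hpf^{(m,1)}_{A} = x_{A}$, and demanding all of these coordinates to vanish forces $v = 0$.)

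For exhaustiveness I would again use Corollary \ref{GeoHPf} together with a trivial dimension count. For any $v \in \bigwedgem{m}V$ the power $v^{\wedge l}$ lies in $\bigwedgem{ml}V$, which is the zero space as soon as $ml > \dim V$. Hence for every $l > (\dim V)/m$ we have $v^{\wedge l} = 0$ for all $v \in \bigwedgem{m}V$, so $\HPf^{(m,l)}(V) = \bigwedgem{m}V$, and the ascending chain stabilizes at the full ambient space. There is no substantive obstacle here; the only care point is to invoke Corollary \ref{GeoHPf} uniformly at both ends of the filtration rather than unpacking the combinatorial formula for $\hpf^{(m,l)}_{A}$ each time.
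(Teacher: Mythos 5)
Your proposal is correct, and the first step (deducing the chain of inclusions from Lemma \ref{lemma:filtration}) is exactly what the paper does. Where you diverge is in the two endpoints. The paper does not explicitly argue the bottom equality $\HPf^{(m,1)}(V)=\{0\}$ at all, whereas you supply it (either reading of Corollary \ref{GeoHPf} at $l=1$, or the observation that the degree-$1$ forms are just the coordinates $x_A$, works). For exhaustiveness the paper invokes Remark \ref{Root}(3), viewing $\hpf^{(m,L)}_{[mL]}$ as an $m$-th root of the determinant and asserting that $\HPf^{(m,L)}(V)=\bigwedgem{m}V$ for $L$ large, without pinning down $L$; your route via Corollary \ref{GeoHPf} and the vanishing of $\bigwedgem{ml}V$ once $ml>\dim V$ is more elementary, gives the explicit threshold $l>(\dim V)/m$, and makes transparent why finite-dimensionality of $V$ is needed (consistent with the paper's subsequent remark that the filtration fails to be exhaustive for infinite-dimensional $V$). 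Both arguments are sound; yours is the tighter one.
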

\begin{proof}
The existence of the filtration follows from Lemma \ref{lemma:filtration}.

As mentioned in Remark \ref{Root}(3), the hyper-Pfaffian form $\hpf^{(m, l)}_{[ml]}$ is an $m$-th root from the determinant $\det_{[ml]}$ on the set of completely antisymmetric tensors. So for a sufficiently big $L$ the variety $\HPf^{(m, L)}(V)$ coincides with the ambient space $\bigwedgem{m}V$. The number $L$ depends on dimension of the space $V$.
\end{proof}

In light of the proposition, Theorem \ref{thm:InclToHPf} can be reformulated as follows.
\begin{corollary}\label{cor:finiterankPlu}
Every proper $\SI$-variety has a finite rank, i.e., for any proper $\XX$ there exists a natural number $N$ such that all $N\times N$ determinants are identically zero on $\XXX$.
\end{corollary}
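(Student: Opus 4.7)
The plan is to deduce this corollary as an essentially direct repackaging of Theorem \ref{thm:InclToHPf} via the determinantal nature of the hyper-Pfaffian forms. First, I apply Theorem \ref{thm:InclToHPf} to the given proper $\SI$-variety $\XX$, producing natural numbers $m,l,r,s$ such that $\XXX \subseteq \HHPf^{(m,l),(r,s)}_{\infty} = \HHPf^{(m,l)}_{\infty} \cap \HHPf^{(r,s),\star}_{\infty}$.

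Next I translate this containment into a uniform wedge-nilpotency statement. By the explicit component description of Theorem \ref{thm:HPfComponents}, any $v \in \XXX$ has each projection $v_{n,m}$ lying in $\HPf^{(m,l)}(V_{n,m})$, and by the geometric characterization of Corollary \ref{GeoHPf} this is equivalent to $v_{n,m}^{\wedge l} = 0$ inside $\bigwedgem{ml}V_{n,m}$. The dual inclusion $\XXX \subseteq \HHPf^{(r,s),\star}_{\infty}$ produces the analogous vanishing after applying the Hodge star.

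Finally I convert the nilpotency into determinantal vanishing using Remark \ref{Root}(3), which records the identity $(\hpf^{(m,l)}_{[ml]})^m = \det$ on $\bigwedgem{l}\KK^{ml}$. By Proposition \ref{KeyMult}, the coefficients of $v_{n,m}^{\wedge l}$ in the basis of $\bigwedgem{ml}V_{n,m}$ are precisely the hyper-Pfaffian values $\hpf^{(m,l)}_A(v_{n,m})$; raising each to the $m$-th power rewrites the vanishing conditions as honest $ml$-size determinantal equations. Setting $N := \max(ml,rs)$ then yields the uniform bound on determinant size asserted in the corollary.

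The main (mild) obstacle is purely notational: one needs to fix what an ``$N\times N$ determinant'' means on the inverse limit $\WedgeDU$ and to check that the resulting equations are compatible with the transition maps $i^{\dagger}_{n,p}$ and $j^{\dagger}_{n,p}$. This compatibility is already built into the $(m(l-1),m)$-maximality that defines $\HHPf^{(m,l)}$, so once Theorem \ref{thm:InclToHPf} is in hand no substantive work remains.
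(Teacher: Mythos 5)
Your proposal is correct and follows the paper's own route: the paper likewise treats this corollary as an immediate reformulation of Theorem \ref{thm:InclToHPf} combined with the hyper-Pfaffian filtration of Proposition \ref{prop:filtration}, whose determinantal content is exactly the identity of Remark \ref{Root}(3) that you invoke. The extra detail you supply (passing through Corollary \ref{GeoHPf} and Proposition \ref{KeyMult} to identify the coefficients of $v^{\wedge l}$ with hyper-Pfaffian values before taking $m$-th powers) is a faithful unpacking of the same argument rather than a different one.
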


\begin{remark}
The filtration of Proposition \ref{prop:filtration} is exhaustive for a finite dimensional vector spaces $V$ only. Indeed, from Proposition \ref{OddPartition} we see that for an odd partition $\pi$ the space $\overline{(\bigwedgem{m}V)_{\pi, k}}$ is contained in the hyper-Pfaffian $\HPf^{(m,k+1)}(V)$ as a scheme. However, we do not have such embeddings for even partitions. For instance, even for the simplest case of $\pi = (2,2)$, none of the hyper-Pfaffian forms $\hpf^{(4, l)}$ belongs to the ideal corresponding to the affine scheme $\overline{(\bigwedgem{4}V)_{(2,2), 1}}$. 
\end{remark}

\section{Noetherianity proof}

\begin{theorem} \label{thm:HPfNoeth}
For any natural numbers $m,k,r,s$, the variety $\HHPf^{(m,l),(r,s)}_{\infty}$ is $\GGL$-noetherian.
\end{theorem}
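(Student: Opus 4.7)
The plan is to induct on the pair $(l,s)$ under the well-founded partial order $\vartriangleleft$ from the notation, with the widths $m,r$ held fixed. The base cases $l=1$ and $s=1$ are immediate: Corollary \ref{GeoHPf} forces $v^{\wedge 1} = v = 0$, so $\HHPf^{(m,l),(r,s)}_\infty$ reduces to a single point and is trivially $\GGL$-noetherian. Assume inductively that $\HHPf^{(m,l'),(r,s')}_\infty$ is $\GGL$-noetherian whenever $(l',s') \vartriangleleft (l,s)$.

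For the inductive step, the core move is to build a $\GGL$-equivariant constructible surjection from a $\GGL$-noetherian source onto $\HHPf^{(m,l),(r,s)}_\infty$. Given a closed $\GGL$-invariant subset $Z$, I would split into two cases. If every element of $Z$ lies in $\HHPf^{(m,l-1),(r,s)}_\infty \cup \HHPf^{(m,l),(r,s-1)}_\infty$, the inductive hypothesis applies to both strata and hence to their union, and any descending chain inside $Z$ stabilizes. Otherwise $Z$ contains an element $v$ whose primal nilpotency is exactly $l$ and whose dual nilpotency is exactly $s$. By Proposition \ref{KeyMult} and Corollary \ref{GeoHPf}, the top power $v^{\wedge(l-1)}$ is a nonzero decomposable $m(l-1)$-form; acting by a suitable element of $\GGL$ I would normalize it to $e_{-1}\wedge e_{-2}\wedge\dots\wedge e_{-m(l-1)}$, which identifies $v$ with a pair consisting of this normal form together with a residual that, by the wedge-algebra identities of Proposition \ref{KeyMult}, lands in a two-sided hyper-Pfaffian with strictly smaller parameters on the complementary span.

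The main obstacle is making the peeling rigorous in the inverse-limit setting. Three technical points need care: first, verifying that the $\GGL$-normalization of the top power survives passage from the finite stages $\HHPf_{n,p}$ to the limit, which I expect to follow from the maximality description of Theorem \ref{thm:HPfComponents} and the component bounds of Proposition \ref{prop:components}; second, checking that the residual genuinely lies in a strictly smaller two-sided hyper-Pfaffian (so that the $(l',s') \vartriangleleft (l,s)$ induction applies) and that the assignment (normal form, residual) $\mapsto v$ assembles into a constructible $\GGL$-equivariant morphism of schemes over the inverse limit; and third, translating the constructible surjection into the stabilization of a descending chain, by pulling a hypothetical strictly descending chain in $Z$ back to the $\GGL$-noetherian source and deriving a contradiction with the inductive hypothesis. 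Once these steps are set up, $Z$ decomposes as the union of the image of the parameterization and a closed subset of a lower-parameter hyper-Pfaffian, both $\GGL$-noetherian by induction, proving the theorem.
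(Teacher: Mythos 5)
Your induction scheme and the opening decomposition
$\HHPf^{(m,l),(r,s)}_{\infty} = \HHPf^{(m,l-1),(r,s)}_{\infty} \cup \HHPf^{(m,l),(r,s-1)}_{\infty} \cup Z$
match the paper (which also needs Lemma \ref{OneElem} to replace ``both forms nonzero after possibly different group shifts'' by a single locus $Z$ where the two specific forms $\hpf^{(m,l)}$ and $\hpf^{(r,s),\star}$ are simultaneously nonzero). But your treatment of the residual stratum $Z$ has a genuine gap, and it is exactly where all the work lies. First, the normalization step rests on a false claim: for $m\geqslant 4$ an element $v$ with $v^{\wedge l}=0$ and $v^{\wedge(l-1)}\neq 0$ need not have $v^{\wedge(l-1)}$ decomposable. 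The identity ``top nonzero power is a product of $2$-planes'' is special to $m=2$, where $\HPf^{(2,l)}$ is a bounded-rank locus; for larger $m$ the variety $\{v:\,v^{\wedge l}=0\}$ is much larger than the $(l-1)$-st secant of the Grassmannian, so there is no decomposable normal form to act on by $\GGL$. Second, and more fundamentally, the claim that the ``residual'' lands in a two-sided hyper-Pfaffian with strictly smaller parameters cannot be right as stated: $Z$ consists precisely of the elements of exact type $(l,s)$, i.e., those \emph{not} contained in any smaller hyper-Pfaffian, so if your peeling reduced them to smaller parameters the stratum would be empty and the induction would be vacuous. The stratum $Z$ genuinely requires a new noetherianity input, not another application of the inductive hypothesis.

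The missing ingredient is the external theorem the paper leans on: Draisma--Eggermont's noetherianity of the spaces $A_{a,b,c,d}=(\mathrm{Mat}_{\NN\times\NN})^a\times\mathrm{Mat}_{\NN\times b}\times\mathrm{Mat}_{c\times\NN}\times\KK^d$ under $\GGL\times\GGL$. The paper's route is: restrict to the subgroup $H=\GL_{(-\infty,-ml+m)}\times\GL_{(rs-r,+\infty)}$ stabilizing $Z$; isolate a subspace $\Wedgegood$ of ``good'' coordinates that embeds $H$-equivariantly into such an $A_{a,b,c,d}$ and is therefore $H$-noetherian; and then prove the projection $Z\rightarrow\Wedgegood$ is \emph{injective} by showing, via an induction on coordinates ordered by Young diagrams and the expansion $\hpf^{(m,l+1)}_{\mathcal{I}}=x_{I}\cdot\hpf^{(m,l)}_{\mathcal{I}\backslash I}+Q$, that every non-good coordinate is a rational function of good ones with denominator a power of $\hpf^{(m,l)}$ or $\hpf^{(r,s),\star}$ --- both invertible on $Z$ by construction. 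This rational elimination of coordinates is the rigorous substitute for your ``peeling,'' and it is the step your proposal would need to supply; without some such reduction to a known noetherian space, induction on $(l,s)$ alone cannot close.
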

It's curious that the proof of Draisma--Eggermont with minor changes is applicable in the general situation. Because of this, here we present a compact version of the proof (keeping the notation of \cite{DraEgger}).

Following \cite{DraEgger}, we use the general lemma.

\begin{lemma} \label{OneElem}
Let $\omega \in \WedgeDU$ and suppose there exist elements $g_1, g_2 \in \GGL$ such that $F_1(g_1\omega)\neq 0$ and $F_2(g_2\omega)\neq 0$ for some polynomial functions $F_{1,2}$ on $\WedgeDU$. Then there exists an element $g \in \GGL$ such that $F_{1}(g\omega)\neq0$ and $F_{2}(g\omega)\neq 0$. Moreover, the element $g$ can be found in the form $g = \lambda g_1 + \mu g_2$ for some $\lambda, \mu \in K$.
\end{lemma}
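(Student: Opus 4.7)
My plan is to view $g(\lambda,\mu) := \lambda g_1 + \mu g_2$ as a two-parameter family of endomorphisms of a sufficiently large finite-dimensional space, and reduce the claim to the simultaneous nonvanishing of three polynomials in $(\lambda,\mu) \in \KK^2$.

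First, I would fix a pair $(N,P) \in \ZZO^{2}$ large enough that both $g_1$ and $g_2$ lie in the common subgroup $\GL(V_{N,P}) \subseteq \GGL$. Any polynomial function on $\WedgeDU = \varprojlim_{n,p}\bigwedgem{p}V_{n,p}$ factors through the projection to a single finite-dimensional component $\bigwedgem{p_0}V_{n_0,p_0}$; after enlarging $(N,P)$ if necessary, we may assume that both $F_1$ and $F_2$ factor through projections to components with $n_0 \leqslant N$ and $p_0 \leqslant P$. Since the $\GGL$-action is compatible with the $i^{\dagger}$- and $j^{\dagger}$-projections in this range, evaluation at $g(\lambda,\mu)\,\omega$ reduces to computing the action of $g(\lambda,\mu)$ on a fixed vector of $\bigwedgem{p_i}V_{n_i,p_i}$, which depends polynomially on the matrix entries of $g(\lambda,\mu)$, which are in turn linear in $(\lambda,\mu)$. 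Thus $P_i(\lambda,\mu) := F_i\bigl(g(\lambda,\mu)\,\omega\bigr)$ for $i=1,2$ and $D(\lambda,\mu) := \det\bigl(g(\lambda,\mu)\bigr)$ are all elements of $\KK[\lambda,\mu]$.

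Second, each of these three polynomials is nonzero. Specializing at $(\lambda,\mu)=(1,0)$ gives $D(1,0)=\det(g_1)\neq 0$ and $P_1(1,0)=F_1(g_1\omega)\neq 0$ by hypothesis; specializing at $(\lambda,\mu)=(0,1)$ gives $P_2(0,1)=F_2(g_2\omega)\neq 0$. Hence $D\cdot P_1\cdot P_2 \in \KK[\lambda,\mu]$ is a nonzero polynomial. Since $\KK$ has characteristic $0$ and is therefore infinite, there exists a point $(\lambda_0,\mu_0)\in\KK^2$ at which this product does not vanish. The corresponding element $g := \lambda_0 g_1 + \mu_0 g_2$ is then invertible, so belongs to $\GL(V_{N,P}) \subseteq \GGL$, and satisfies $F_1(g\omega)\neq 0$ and $F_2(g\omega)\neq 0$ simultaneously.

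The only real obstacle is the very first step: pinning down ``polynomial function on $\WedgeDU$'' so that the factorization through a finite-dimensional component is genuine, and checking that the restriction of the $\GGL$-action is unambiguous on that component. Once this reduction to a finite-dimensional representation of $\GL(V_{N,P})$ is in place, the remainder is a routine Zariski-density argument over an infinite field.
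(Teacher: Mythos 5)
Your proposal is correct and follows essentially the same route as the paper: form the pencil $g(\lambda,\mu)=\lambda g_1+\mu g_2$, reduce everything to a finite-dimensional component so that $F_1(g\omega)$, $F_2(g\omega)$ and the invertibility condition become polynomial conditions in $(\lambda,\mu)$, and pick a point of $\KK^2$ avoiding the union of the three proper closed bad loci. Your verification of nonvanishing by specializing at $(1,0)$ and $(0,1)$ is in fact a cleaner version of the paper's ``leading coefficient'' argument, and the preliminary reduction you flag (polynomial functions on $\WedgeDU$ factor through a finite level of the inverse limit) is standard and is taken for granted in the paper.
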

\begin{proof} Consider the function $g := g (\lambda, \mu) = \lambda g_1 + \mu g_2$ and the set
$$\mathcal{F}:= \{(\lambda, \mu) \in K^2:\; g \not\in \GGL,\; F_{1}(g\omega) = 0 \text{, and }F_{2}(g\omega) = 0\}.$$
The set $\mathcal{F}$ is Zariski-closed by definition. The polynomial $F_{1}(g\omega)$ in variables $\lambda$ and $\mu$ has a coefficient $F_{1}(g_1\omega)$ for a highest degree monomial containing the variable $\lambda$ only. This coefficient in nonzero by the assumption, therefore the polynomial $F_{1}(g \omega)$ is also nonzero. The same logic for the polynomial $F_2(g\omega)$ and the variable $\mu$ proves nonzeroness of this polynomial. Therefore $\mathcal{F}$ is a proper Zariski-closed subset of $K^2$. 

Any point $(\lambda, \mu)\not\in \mathcal{F}$ gives the desired element $g \in \GGL$. 
\end{proof}

\begin{proof}[Proof of Theorem \ref{thm:HPfNoeth}]

We fix numbers $m$ and $r$ and proceed by induction on $l$ and $s$. 

The base of the induction, the case $l = 1$ or $s = 1$, is clear. Indeed, the $\GGL$-orbit of the polynomial $\hpf^{(m;1)}_{[m]}({\bf x}) = x_{12\dots m}$ contains all coordinate variables $x_{I}$, $I \in \bigwedgem{m} \NN$. The intersection of these polynomials is the point $0$.

From now on we assume that $l, s > 1$. 

\textit{\textbf{Step 1}: decomposition and the variety $Z$.} By Lemma \ref{lemma:filtration} we have the decomposition
$$\HPf^{(m,l+1),(r,s+1)}_{\infty} = \HPf^{(m,l+1),(r,s)}_{\infty} \cup \HPf^{(m,l),(r,s+1)}_{\infty} \cup Z'_{m,r},$$
where $Z' := Z'_{m,r}$ is the set of all elements $\omega$ such that there exist $g_1, g_2 \in\GGL$ with $\hpf^{(m,l)}(g_1 \omega) \neq 0$ and $\hpf^{(r,s), \star}(g_2 \omega) \neq 0$:
$$Z':= \{\omega \in \HHPf^{(m,l+1),(r, s+1)}_{\infty}:\;\hpf^{(m,l)}(g_1 \omega) \neq 0\text{ and } \hpf^{(r,s),\star}(g_2 \omega) \neq 0\text{ for some }g_{1}, g_{2}\in \GGL\}.$$
Applying Lemma \ref{OneElem} for the case $F_{1} = \hpf^{(m,l)}$ and $F_{2} = \hpf^{(r,s),\star}$, we get that $Z' = \GGL \cdot Z$ where
$$Z = \{\omega \in \HHPf^{(m,l+1),(r,s+1)}_{\infty}: \hpf^{(m,l)}(\omega)\neq 0 \text{ and }\hpf^{(r,s),\star}(\omega) \neq 0\}.$$
For the induction step it is enough to show the $\GGL$-noetherianity of the set $Z$. 

\textit{\textbf{Step 2}: the subgroup $H$.} We will prove that the set $Z$ is noetherian for a certain subgroup $H$ of $\GGL$. Let us define this subgroup $H$.

For any $S \subseteq \ZZX$, let $\GL_{S}$ be the subgroup of $\GGL$ that fixes $e_{t}\in V_{\infty}$ with $t\not\in S$. Then we define the subgroup $H$ as the following product
$$H:= \GL_{(-\infty, -ml+m)}\times \GL_{(rs-r, +\infty)},$$
where $(a,b)$ stands for the set $\{x \in \NN: a < x < b\}$. 

The group $H$ stabilizes $Z$. Indeed, the coordinates $x_{I}$ appearing in the forms $\hpf^{(m,l)}$ and $\hpf^{(r,s), \star}$ satisfy the condition $I \subseteq [-ml+m, rs-r]$.

\textit{\textbf{Step 3}: A ``good'' subspace.} To prove that $Z$ is $H$-Noetherian, we embed it into a bigger $H$-Noetherian subspace of $\WedgeDU$.

\begin{definition}
A subset $I\subseteq \ZZX$ is \textit{good} (with respect to $(m, l, r, s)$) if $|I \cap \ZZ_{<0}|$ and $|I^{c}\cap \ZZ_{>0}|$ are finite of the same cardinality, and both $I\cap\mathbb{Z}_{\leqslant -ml+m-1}$ and $I^c \cap \mathbb{Z}_{\geqslant rs-r}$ have cardinality at most 1.

We denote by $(\Wedge)_{\text{good}}$ the subspace of $\Wedge$ spanned by good coordinates. The dual space $\Wedgegood$ is naturally $H$-equivariantly embedded into $\WedgeDU$. 
\end{definition}

\begin{lemma}
The topological space $\Wedgegood$ with the Zariski topology is $H$-Noetherian.
\end{lemma}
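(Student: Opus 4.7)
My plan is to realize $\Wedgegood$ as an affine $H$-scheme whose coordinate ring is a polynomial $H$-representation, and then invoke the topological noetherianity theorem for polynomial representations of products of infinite classical groups (Draisma \cite{DraismaPoly}, extended by Eggermont--Snowden \cite{EggerSnowInfGrp}).

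First I would decompose $(\Wedge)_{\good}$ as an $H$-representation according to the tail type. For each good $I$, let $\epsilon_-(I) := |I \cap \ZZ_{\leqslant -ml+m-1}| \in \{0,1\}$ and $\epsilon_+(I) := |I^c \cap \ZZ_{\geqslant rs-r}| \in \{0,1\}$ record whether $I$ uses its allowed left-tail slot and right-tail hole. Since the intermediate range $[-ml+m,\, rs-r-1]$ is finite and the charge-0 condition only restricts (does not enlarge) the middle data, for each fixed tail type $(\epsilon_-, \epsilon_+)$ there are only finitely many possible middle configurations of $I$. This gives an $H$-equivariant decomposition
\[
(\Wedge)_{\good} \;\cong\; \bigoplus_{(\epsilon_-, \epsilon_+) \in \{0,1\}^2} F_{\epsilon_-, \epsilon_+} \otimes T^{(1)}_{\epsilon_-} \otimes T^{(2)}_{\epsilon_+},
\]
where $F_{\epsilon_-, \epsilon_+}$ is finite-dimensional (indexing middle configurations), $T^{(i)}_0 = \KK$ is trivial, and $T^{(i)}_1$ is, up to a one-dimensional determinantal twist, the countable-dimensional natural representation of $\GL^{(i)}$. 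Crucially, $H$ stabilizes $(\Wedge)_{\good}$, because $\GL^{(1)}$ and $\GL^{(2)}$ act on disjoint ranges in $\ZZX$ each of which meets any good $I$ in at most one position. Thus $(\Wedge)_{\good}$ is a polynomial $H$-representation.

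Next I would pass to $\Wedgegood$, which as an affine scheme equals $\mathrm{Spec}(\Sym((\Wedge)_{\good}))$, embedded $H$-equivariantly into $\WedgeDU = \mathrm{Spec}(\Sym(\Wedge))$. Since the symmetric algebra of a polynomial representation is again a polynomial representation, $\KK[\Wedgegood]$ is a polynomial $H$-representation. The topological noetherianity theorem for polynomial representations of $\GGL$ \cite{DraismaPoly}, extended to finite products of infinite classical groups \cite{EggerSnowInfGrp}, then forces every descending chain of closed $H$-stable subsets of $\Wedgegood$ to stabilize.

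The main obstacle I expect is the representation-theoretic bookkeeping in the decomposition step: identifying the correct $H$-module structure on each type component (in particular accounting for the determinantal twist of the ``hole'' sector when $\epsilon_+ = 1$), and confirming that the charge-0 coupling is harmless because it only further restricts finite data. Once that decomposition is in place, the remainder reduces to a direct appeal to the known polynomial-representation noetherianity.
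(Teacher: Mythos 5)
Your tail-type decomposition is exactly the paper's argument: it is the identification of $\Wedgegood$ with the space $A_{a,b,c,d}=\left(\mathrm{Mat}_{\NN\times\NN}\right)^a\times\mathrm{Mat}_{\NN\times b}\times\mathrm{Mat}_{c\times\NN}\times\KK^d$ from \cite[Lemma 6.5]{DraEgger} (the four tail types give the four kinds of factors, with the finite middle configurations supplying the multiplicities $a,b,c,d$). The gap is in the final citation. The representation $(\Wedge)_{\good}$ is \emph{not} a polynomial representation of $H$: the hole sector, restricted to $\GL_N\subseteq\GL_{(rs-r,+\infty)}$, is $\bigwedgem{N-1}\KK^{N}\cong\det\otimes(\KK^{N})^{*}$, whose matrix coefficients are $(N-1)\times(N-1)$ minors of $g$ and hence have degree growing with $N$; the limit is a determinant-twisted restricted \emph{dual}, not a finite-degree polynomial functor. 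In fact even your $\epsilon_+=0$ sector is not trivial: a hole-free good $e_I$ contains the full far-right wedge $e_{rs-r+1}\wedge e_{rs-r+2}\wedge\cdots$, on which $\GL_N$ acts by $\det$. So neither \cite{DraismaPoly} nor \cite{EggerSnowInfGrp} applies off the shelf to $\Sym\big((\Wedge)_{\good}\big)$. You flag the determinantal twist but call it bookkeeping; it is not, because twisting a module by a character changes the image of $H$ in the general linear group of that module and hence, a priori, the collection of $H$-stable closed subsets (non-conical stable sets are not preserved under the extra rescaling, and the twisted and untwisted images are incomparable as subgroups).

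The paper sidesteps all of this by citing \cite[Theorem 1.5]{DraEgger}, which asserts $\GGL\times\GGL$-noetherianity of $A_{a,b,c,d}$ for precisely the mixed action $(g,h)\cdot(A,v,w,c)=(gAh^{-1},gv,wh^{-1},c)$ carried by the good part of the wedge; that is the citation your argument needs. If you insist on reducing to polynomial-representation noetherianity instead, you must (i) precompose the second factor with the automorphism $h\mapsto (h^{-1})^{T}$ of $\GL_{(rs-r,+\infty)}$, which turns the restricted duals into copies of the natural representation without changing the set of stable closed subsets, and then (ii) separately dispose of the residual global $\det^{\pm 1}$ character, for instance by verifying that the descending chain condition for the twisted action already follows from that for the subgroup where $\det=1$. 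Step (ii) is exactly the content that the bare appeal to ``polynomial representation noetherianity'' skips.
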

The proof of this Lemma is literally the same as for \cite[Lemma 6.5]{DraEgger}: we can embed the space $\Wedgegood$ into the space $A_{a,b,c,d}$, where 
$$A_{a,b,c,d} = \left(\mathrm{Mat}_{\NN\times\NN}\right)^a \times \mathrm{Mat}_{\NN\times b}\times \mathrm{Mat}_{c \times \NN}\times \KK^d.$$
Then the theorem of Draisma--Eggermont \cite[Theorem 1.5]{DraEgger} states that the space $A_{a,b,c,d}$ is $\GGL\times\GGL$-noetherian, so the space $\Wedgegood$ is. 
\begin{remark}
General statement with the subspace of $\Wedge$ spanned by all ``$S$-good'' coordinates for some finite subset $S \subset \mathbb{N}$ (in the considered case $S = [-ml+m, rs-r]$ ) is true as well. The proof works in the general case.
\end{remark}

\textit{\textbf{Step 4}: the injective projection $Z \rightarrow \Wedgegood$.} Finally, we prove that the natural projection from $Z$ to the good subspace $\Wedgegood$ (the projection forgets non-good coordinates) is injective.

\begin{claim}
On $Z$, each coordinate $x_{I}$ can be expressed as a rational function of the good coordinates, whose denominator has factors $\hpf^{(m,l)}$ and $\hpf^{(r,s), \star}$ only.
\end{claim}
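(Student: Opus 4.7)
The plan is to exploit the hyper-Pfaffian vanishing relations $\hpf^{(m,l+1)}_{A|J}(\omega) = 0$ and $\hpf^{(r,s+1),\star}_{A|J}(\omega) = 0$, both of which hold on $Z$ because $\omega \in \HHPf^{(m,l+1),(r,s+1)}_{\infty}$, together with the non-vanishing of the standard hyper-Pfaffians $\hpf^{(m,l)}(\omega)$ and $\hpf^{(r,s),\star}(\omega)$ guaranteed by the definition of $Z$. I would proceed by induction on a ``badness'' invariant of the semi-infinite index $I$: define $n_-(I) := |I \cap \mathbb{Z}_{\leqslant -ml+m-1}|$ and $n_+(I) := |I^c \cap \mathbb{Z}_{\geqslant rs-r}|$, so that $I$ is good precisely when $n_{\pm}(I) \leqslant 1$.

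For the reduction step in the case $n_-(I) \geqslant 2$ (the case $n_+(I) \geqslant 2$ is symmetric, using the dual hyper-Pfaffian), I would pick an $m$-subset $\bar I \subset I$ absorbing as many far-negative elements of $I$ as possible, set $J := I \setminus \bar I$, and complete to an $m(l+1)$-subset $A := \bar I \sqcup A'$ by adjoining a size-$ml$ subset $A' \subset I^c$ drawn from the moderate range $[-ml+m, rs-r]$. Expanding the vanishing $\hpf^{(m,l+1)}_{A|J}(\omega) = 0$ by grouping partitions of $A$ according to which part equals $\bar I$ produces an identity of the form
\[
\pm\, x_I(\omega) \cdot \hpf^{(m,l)}_{A'|J}(\omega) = -\sum_{\substack{A = I_1 \sqcup \dots \sqcup I_{l+1} \\ \bar I \notin \{I_i\}}} \sgn(I_1, \dots, I_{l+1}) \cdot \prod_i x_{I_i \cup J}(\omega).
\]
Because $A'$ lies in the moderate range, $\hpf^{(m,l)}_{A'|J}$ coincides (up to an element of the stabilizer $H$ of $Z$) with the standard hyper-Pfaffian $\hpf^{(m,l)}$, and hence is nonzero at $\omega$; dividing expresses $x_I(\omega)$ as a rational function in the remaining $x_{I_i \cup J}(\omega)$'s whose denominator acquires one extra factor of $\hpf^{(m,l)}$.

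Iterating the descent — alternating between $\hpf^{(m,l+1)}$-reductions for $n_-$ and $\hpf^{(r,s+1),\star}$-reductions for $n_+$ — terminates once both measures are at most $1$, leaving only good coordinates on the right-hand side; the cumulative denominator is a product of factors of $\hpf^{(m,l)}$ and $\hpf^{(r,s),\star}$ as required. The main technical obstacle is verifying strict descent at each step: one must show that every coordinate $x_{I_i \cup J}$ surviving in the right-hand side satisfies $n_-(I_i \cup J) < n_-(I)$. When $n_-(I) \leqslant m$, choosing $\bar I$ to absorb every far-negative element of $A$ and restricting $A'$ to the moderate range suffice; when $n_-(I) > m$, the argument requires a lexicographic refinement of the badness measure — ordering the far-negative indices by proximity to the moderate range and showing that each reduction pushes the extremal index strictly inward — essentially along the lines of the analogous Pfaffian argument in \cite{DraEgger}.
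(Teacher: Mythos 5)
Your proposal runs on the same engine as the paper's proof: the expansion of $\hpf^{(m,l+1)}_{A|J}$ according to which part of the partition equals $\bar I$ (the same identity as in Lemma \ref{lemma:filtration}), the vanishing of the left-hand side on $Z$, and division by the degree-$l$ coefficient. The gap is exactly where you locate it --- the strict descent --- but your diagnosis of where it bites is backwards, and the fix you defer to is the actual content of the paper's argument. Concretely: all far-negative indices of $A=\bar I\sqcup A'$ lie in $\bar I$, so when $n_-(I)\geqslant m$ every part $I_i\neq\bar I$ omits at least one of them and $n_-(I_i\cup J)\leqslant n_-(I)-1$; your ``hard'' case $n_-(I)>m$ is therefore fine. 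The failure occurs precisely in your ``easy'' case $2\leqslant n_-(I)<m$: there $\bar I$ must be padded with moderate elements of $I$, and a part $I_i\neq\bar I$ consisting of all $n_-(I)$ far-negative indices together with different fillers gives $n_-(I_i\cup J)=n_-(I)$, so the count does not drop. A further interaction you do not address: if $I$ has fewer than $m-n_-(I)$ moderate elements, $\bar I$ must contain elements of $I\cap\mathbb{Z}_{\geqslant rs-r}$, and a part $I_i$ omitting such an element pushes it into $(I_i\cup J)^{c}$, so a $\hpf^{(m,l+1)}$-reduction can \emph{increase} $n_+$. The paper sidesteps all of this by inducting not on a counting measure but on the partial order on coordinates induced by the order $\prec$ on Young diagrams (via the standard bijection between charge-zero semi-infinite index sets and partitions), choosing the relation $\hpf^{(m,l+1)}_{\mathcal I}$ so that $x_I$ is the unique $\prec$-maximal coordinate occurring; every coordinate of the residue $Q$ is then strictly smaller, and the minimal non-good coordinates reduce directly to good ones. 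That order is precisely the ``lexicographic refinement'' your argument is missing, and without it the induction does not close.

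A second point your parenthetical ``up to an element of the stabilizer $H$'' does not settle: the denominator produced at each step is the \emph{shifted} form $\hpf^{(m,l)}_{A'|J}$, while $Z$ is cut out only by the nonvanishing of the standard $\hpf^{(m,l)}$ and $\hpf^{(r,s),\star}$. An $H$-translate of the standard form would indeed be nonzero on the $H$-stable set $Z$, but the element of $\GGL$ carrying the standard index data to your $(A',J)$ in general permutes indices inside the window $[-ml+m,rs-r]$ and so does not lie in $H$. You must either arrange $A$ and $J$ so that the coefficient of $x_I$ is literally the standard form, or justify separately why the specific shifts that arise are nonvanishing on $Z$; the claim as stated (denominators with factors $\hpf^{(m,l)}$ and $\hpf^{(r,s),\star}$ only) is what the injectivity of $Z\rightarrow\Wedgegood$ in Step 4 actually requires.
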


It is classically known that coordinates on the space $\Wedge$ are in one-to-one correspondence with Young diagrams, the classical reference is \cite[Chapter 9]{MiwaJinboJimboDate00}, the more contemporary exposition is in \cite{Rios-Zertuche13}. The idea for the proof is the induction on coordinates $\{x_{I},\; I \in \bigwedgem{\frac{\infty}{2}}\NN\}$ with respect to the partial order $<$ coming from the partial order $\prec$ on the set of Young diagrams. For instance, the relation $\emptyset \prec (2) \prec (2,1)$ for Young diagrams translates to the relation $x_{1234\dots} < x_{-1134\dots} < x_{-2134\dots}$ for the coordinates. 

In detail, consider a non-good coordinate $x_{I}$ such that all smaller coordinates are good or satisfy the claim.

If $|I\cap \mathbb{Z}_{\leqslant -ml+m-1}|=1$, then denote by $\mathcal{I}$ any subset of $\ZZX$ such that the set $I$ is initial subinterval of $\mathcal{I}$ (with respect to the natural order on $\ZZX$).  Then the hyper-Pfaffian form $\hpf^{(m,l+1)}_{\mathcal{I}}$ looks as follows
$$ \hpf^{(m,l+1)}_{\mathcal{I}} = x_{I} \cdot \hpf^{(m,l)}_{\mathcal{I}\backslash I}+Q$$
with all coordinates in $Q$ are strictly smaller than $x_I$. The polynomial $\hpf^{(m, l+1)}_{\mathcal{I}}$ is zero on $Z$ (as a shift of the polynomial $\hpf^{(m,l+1)}({\bf x})$ which is identically zero on $Z \subset \HHPf^{(m, l+1),(r, s+1)}_{\infty}$), therefore the following equality holds true on $Z$:
$$x_{I} = -\frac{Q}{\hpf^{(m,l)}_{\mathcal{I}\backslash I}}.$$

The case $|I\cap \mathbb{Z}_{\geqslant rs-r}|=1$ is treated similarly with use of dual hyper-Pfaffian forms.
\end{proof}

\begin{corollary}\label{cor:MainThm}
Theorem \ref{Thm:MainThm} holds true.
\end{corollary}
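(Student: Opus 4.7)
The plan is to read Theorem \ref{Thm:MainThm} off from Theorem \ref{thm:WedgeNoeth}, which at this point has been established via Theorem \ref{thm:InclToHPf}, Proposition \ref{prop:filtration}, and Theorem \ref{thm:HPfNoeth}. Given any $\SI$-variety $\XX$ (the ambient case is trivial, so assume $\XX$ is proper), the first step is to pass to the limiting variety $\XXX = \varprojlim_{n,p} \XX_{n,p}$, which sits as a closed $\GGL$-invariant subscheme of $\WedgeDU$.

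Next, I apply $\GGL$-equivariant noetherianity to $\XXX$. Writing $\XXX = \bigcap_{i} \{f_i = 0\}$ where the $f_i$ range over all polynomial functions vanishing on $\XXX$ (grouped into $\GGL$-orbits), the descending chain of finite sub-intersections must stabilize; hence $\XXX$ is cut out set-theoretically by finitely many $\GGL$-orbits of polynomials $f_1,\dots,f_N$. Since every polynomial on the pro-scheme $\WedgeDU = \varprojlim \bigwedgem{p}V_{n,p}$ factors through some finite-dimensional quotient $\bigwedgem{p_i}V_{n_i,p_i}^{*}$, I may choose a single pair $(n_0,p_0)$ dominating all the $(n_i,p_i)$ and regard the $f_j$'s as polynomial equations on $\bigwedgem{p_0}V_{n_0,p_0}$; setting $V_0 := V_{n_0,p_0}$ matches the notation of the statement.

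Finally, I translate back to the individual components $\XX_{n,p}$. The surjectivity axiom (ii) of Definition \ref{def:SIVar}, together with Remark \ref{rem:SIVarDef}, yields that each projection $\XXX \twoheadrightarrow \XX_{n,p}$ is surjective. Consequently, vanishing of all $\GGL$-translates of $f_1,\dots,f_N$ on $\XXX$ descends to the vanishing of all $(i^{\dagger}, j^{\dagger})$-pullbacks of the equations for $\XX_{n_0,p_0}$ on each $\XX_{n,p}$, which is exactly the desired set-theoretic characterization. The uniform bound on degrees is then automatic: every defining equation has degree at most $\max_{j} \deg f_j$.

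The step I expect to need the most care is the last one: verifying rigorously that the projections $\XXX \to \XX_{n,p}$ are genuinely surjective and that the set-theoretic characterization of $\XXX$ really descends to each component. The former hinges on a Mittag--Leffler-type argument applied to the surjective inverse system $(\XX_{n,p}, i_{n,p}^{\dagger}, j_{n,p}^{\dagger})$ of affine $\KK$-varieties of finite type; the latter is then a direct orbit chase using the $\GL_{n,p}$-invariance built into axiom (iii) of Definition \ref{def:SIVar}.
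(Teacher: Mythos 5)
Your argument follows the paper's proof essentially verbatim: both deduce from the noetherianity results (the paper directly via Theorems \ref{thm:InclToHPf} and \ref{thm:HPfNoeth}, you via the already-packaged Theorem \ref{thm:WedgeNoeth}, which is exactly the route the paper's plan announces) that $\XXX$ is cut out set-theoretically by finitely many $\GGL$-orbits of polynomials, then push these polynomials down to a single component $\XX_{n_0,p_0}$ and descend to every $\XX_{n,p}$ using the $i$- and $j$-maps. One correction of emphasis in your last step: the surjectivity of $\XXX \twoheadrightarrow \XX_{n_0,p_0}$ (your Mittag--Leffler point) is what guarantees that the chosen $f_j$ genuinely vanish on $\XX_{n_0,p_0}$ and hence qualify as ``equations for $\XX_{p_0}(V_0)$'', but it does not give the hard inclusion --- that the common zero locus of all pullbacks is contained in $\XX_{n,p}$; for that one lifts a point of the zero locus along the section $\bigwedgem{p}V_{n,p}\hookrightarrow \WedgeDU$ built from the $i$- and $j$-maps (using $i^{\dagger}\circ i=j^{\dagger}\circ j=\id$), checks that the lift satisfies all $\GGL$-translates of the $f_j$ and so lies in $\XXX$, and projects back. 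As written, your final paragraph only establishes that the pullback equations vanish on $\XX_{n,p}$, not that they cut it out, so this one direction still needs the section argument (which is what the paper's phrase ``using $i$- and $j$-maps'' is gesturing at).
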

\begin{proof}
Theorem \ref{thm:InclToHPf} says that for any proper $\SI$-variety $\XX$ the limiting variety $\XXX$ is a subset of a $\GGL$-noetherian variety $\HHPf^{(m,l),(r,s)}_{\infty}$ (Theorem \ref{thm:HPfNoeth}). Therefore the $\GGL$-stable variety $\XXX$ is given by $\GGL$-orbits of finitely many polynomial equations inside $\WedgeDU$.

We can find $n_0$ and $p_0$ such that the $\GGL$-orbits of the equations of $\XX_{n_0, p_0}$ define $\XXX$. Therefore using $i$- and $j$-maps we can see that all $\XX_{n,p}$ are defined by $\GGL$-translations of the polynomials for $\XX_{n_0, p_0}$.
\end{proof}

\section{Examples}\label{sec:ExplHPfa}

\subsection{\texorpdfstring{$\Pf^{(2)}$}{TEXT}}
We recall from Proposition \ref{prop:components} that we always have the following inclusions for the $(n,p)$-component of $\SI$-variety:
$$\YY_{n,p}^{\min}:= \overline{\bigcup_{j} j(\YY_{n, p-1})} \subseteq \YY_{n, p} \subseteq \bigcap_{j^{\dagger}} (j^{\dagger})^{-1}(\YY_{n,p-1}) =: \YY_{n,p}^{\max},$$
where the right intersection runs over the orbit of the map $j_{n,p-1}$ under the $\GL_{n, p-1}\times\GL_{n, p}$-action and the left union runs over the orbit of $j_{n,p-1}^{\dagger}$ under the same group.

Let us assume that $\YY_{4,2} = \Pf^{(2)}(V_{4,2})$. The next lemma describes the minimal bound for the $(4,3)$-component.

\begin{lemma}\label{lemma:4,3min}
\begin{enumerate}
    \item[(a)] The variety $\YY^{\min}_{4,3} := \overline{\bigcup_{j} j(\YY_{4, 2})}$ coincides with $\overline{\bigwedgem{2}_{(2)}\wedge\bigwedgem{1}} (V_{4,3})$, where $$\bigwedgem{2}_{(2)}\wedge\bigwedgem{1} (V) := \{\omega \in \bigwedgem{3}(V):\;\omega = (\nu_{1}+ \nu_2)\wedge v \text{ with } \nu_{1,2} \in (\bigwedgem{2}V)_{(1,1),1}\text{ and }v\in \bigwedgem{1}(V) = V\}.$$
    \item[(b)] Generally, if $\XX_{n,p} = \overline{(\bigwedgem{p}V_{n,p})_{\pi, k}}$, then $\XX_{n, P}^{\min} = \overline{\bigwedgem{P}_{\pi, k} \wedge \bigwedgem{1} \wedge \dots \wedge \bigwedgem{1}}(V_{n, P})$ for any $P\geqslant p$.
\end{enumerate}
\end{lemma}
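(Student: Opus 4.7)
The plan is to simplify $\bigcup_{j} j(\YY_{4,2})$ into a single $\GL$-orbit by exploiting the $\GL$-invariance of $\YY_{4,2}$, and then match the Zariski closure of this orbit with the parametric set on the right-hand side. Since $\YY_{4,2} = \Pf^{(2)}(V_{4,2})$ is $\GL_{4,2}$-stable and a general element in the orbit of the map $j_{4,2}$ under $\GL_{4,2}\times \GL_{4,3}$ takes the form $\omega \mapsto g_2 \bigl((g_1^{-1}\omega)\wedge e_3\bigr)$, the action of $g_1$ absorbs into $\YY_{4,2}$, yielding
$$\bigcup_j j(\YY_{4,2}) \;=\; \GL_{4,3}\cdot (\YY_{4,2}\wedge e_3),$$
so a typical orbit element has the form $(g\omega)\wedge (g e_3)$ for $\omega\in \YY_{4,2}$, $g\in \GL_{4,3}$.

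For (a), the forward containment follows from Proposition \ref{prop:Orbits}: $\Pf^{(2)}(V_{4,2})$ is identified as the closure of a suitable $((1,1),1)$-type stratum, so after $g$-transport, $g\omega$ sits in the closure of the corresponding stratum inside $\bigwedgem{2}V_{4,3}$, and wedging with $v = g e_3$ lands inside $\overline{\bigwedgem{2}_{(2)}\wedge\bigwedgem{1}}(V_{4,3})$ by definition of the right-hand side. For the reverse containment, I would verify that each element $(\nu_1+\nu_2)\wedge v$ of the right-hand side is a Zariski limit of orbit elements: one selects a $6$-dimensional subspace $W\subset V_{4,3}$ adapted to the configuration of $\nu_1,\nu_2,v$, identifies $W$ with $V_{4,2}$ by some $g\in \GL_{4,3}$ sending $e_3\mapsto v$, and appeals to the stratification theorems underlying Proposition \ref{prop:Orbits} to approximate $\nu_1+\nu_2$ (an element of a higher-type stratum in $\bigwedgem{2}V_{4,3}$) by bivectors coming from $\Pf^{(2)}(V_{4,2})$ within the closure.

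Part (b) follows by the same argument applied inductively in $P-p$: each additional $j$-map introduces a new wedge factor that under the $\GL_{n,P}$-action becomes an arbitrary vector of $V_{n,P}$, while the $\bigwedgem{p}$-part remains in the closure of the $(\pi,k)$-stratum of $\bigwedgem{p}V_{n,P}$. Formally,
$$\XX_{n,P}^{\min} \;=\; \overline{\GL_{n,P}\cdot(\XX_{n,p}\wedge e_{p+1}\wedge\cdots\wedge e_P)} \;=\; \overline{\bigwedgem{P}_{\pi,k}\wedge\bigwedgem{1}\wedge\cdots\wedge\bigwedgem{1}}(V_{n,P}).$$

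The main obstacle is the reverse containment in (a) (and its analogue in (b)): the orbit $\GL_{4,3}\cdot\Pf^{(2)}(V_{4,2})$ produces, at the level of individual orbit elements, bivectors whose support sits in a fixed $6$-dimensional subspace of $V_{4,3}$, whereas the parametric set $\bigwedgem{2}_{(2)}\wedge\bigwedgem{1}$ allows wedges $(\nu_1+\nu_2)\wedge v$ in which $\nu_1+\nu_2$ may have full rank $4$. The reconciliation must come from the closure operation and from a careful invocation of the type stratification, with the dominance of the parametrization $(\omega, g)\mapsto (g\omega)\wedge(g e_3)$ onto the target variety being the central technical point.
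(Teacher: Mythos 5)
Your reduction of $\bigcup_j j(\YY_{4,2})$ to $\bigcup_{g\in\GL_{4,3}}\Pf^{(2)}(gV_{4,2})\wedge (ge_3)$ and the forward containment match the paper's argument (the forward direction needs nothing as heavy as Proposition \ref{prop:Orbits}; it is immediate from $j(\nu)=\nu\wedge e_3$ and the description of $\Pf^{(2)}(V_{4,2})$ as sums of two decomposable bivectors). The problem is the reverse containment, which you correctly flag as the crux but then leave open, proposing to resolve it by ``approximation within the closure'' and a ``dominance'' argument. That is a genuine gap, and moreover it points at the wrong mechanism: no limiting process is needed, because the containment already holds for the open parametric sets before taking Zariski closures.

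The missing step is an elementary reduction modulo $v$. Given $\omega=(\nu_1+\nu_2)\wedge v$ with $\nu_{1,2}$ decomposable, choose a complement $U$ of $\langle v\rangle$ in $V_{4,3}$ and let $\nu'$ be the image of $\nu=\nu_1+\nu_2$ under the projection $\bigwedgem{2}V_{4,3}\to\bigwedgem{2}U$ induced by $V_{4,3}=U\oplus\langle v\rangle$. Then $\nu\wedge v=\nu'\wedge v$, and the key observation is that this projection sends decomposable bivectors to decomposable bivectors (write $a=a'+\alpha v$, $b=b'+\beta v$; the projection of $a\wedge b$ is $a'\wedge b'$), so $\nu'=\nu_1'+\nu_2'$ is again a sum of two decomposables. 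Its support is at most $4$-dimensional and avoids $v$, hence sits inside a $6$-dimensional subspace $W\subseteq U$ with $v\notin W$; choosing $g\in\GL_{4,3}$ with $gV_{4,2}=W$ and $ge_3=v$ exhibits $\omega$ as an element of $\Pf^{(2)}(W)\wedge v=g\bigl(j_{4,2}(\Pf^{(2)}(V_{4,2}))\bigr)$ on the nose. (Your worry that $\nu_1+\nu_2$ ``may have full rank $4$'' is a red herring: rank $4$ means $4$-dimensional support, which together with $v$ occupies at most $5$ of the $7$ dimensions of $V_{4,3}$; the only genuine issue is that $v$ may lie in the support of $\nu$, and the projection above removes exactly that obstruction while preserving the number of decomposable summands.) Also note that $\nu_1+\nu_2$ is not in a ``higher-type stratum'' than the elements of $\Pf^{(2)}(V_{4,2})$ --- it has the same type $((1,1),2)$ --- so there is nothing to approximate. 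With this correction, your inductive scheme for part (b) goes through verbatim: at each stage one reduces the $\bigwedgem{p}$-part modulo the new wedge factors, and the projection preserves the $(\pi,k)$-type bound.
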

\begin{proof} We prove the first part only; the proof for the second one uses the same idea.

From the formula $j_{4,2}(\nu) = \nu \wedge e_{3}$ the inclusion $\YY^{\min}_{4,3} \subseteq \overline{\bigwedgem{2}_{(2)}\wedge\bigwedgem{1}} (V_{4,3})$ follows.

By one of the definitions of the $\pf^{(2)}({\bf x})$ form, $\nu \in \bigwedgem{2}(V_{4,2})$ belongs to $\Pf^{(2)}(V_{4,2})$ if and only if $\rk(\nu) \leqslant 2$, i.e., $\nu = \nu_1 + \nu_2$ with $\nu_{1,2} \in (\bigwedgem{2}V)_{(1,1),1}$. Also, for any trivector $\nu \wedge v$ with $\nu \in \bigwedgem{2}V$ and $v \in \bigwedgem{1}V$ we can assume that $\nu \in \bigwedgem{2}\langle v \rangle^{\bot}$. Therefore any element $\omega \in \bigwedgem{2}_{(2)}\wedge\bigwedgem{1} (V_{4,3})$ has the form $\nu \wedge v$ with $\rk(\nu) \leqslant 2$ and $\nu \in \bigwedgem{2}\langle v\rangle^{\bot}$, and we have the inclusion $\overline{\bigwedgem{2}_{(2)}\wedge\bigwedgem{1}} (V_{4,3}) \subseteq \YY^{\min}_{4,3}$.
\end{proof}

\begin{proposition}\label{prop:HoriNonEqua}
If $\YY_{4,2} = \Pf^{(2)}(V_{4,2})$, then $\YY_{4,3}^{\min} \varsubsetneq \YY_{4,3}^{\max}$. Moreover, $\YY_{4,3}^{\min} \varsubsetneq (\bigwedgem{3}V_{4,3})_{(2,1), 2}$ and $\YY_{4,3}^{\max} \neq (\bigwedgem{3}V_{4,3})_{(2,1), 2}$.
\end{proposition}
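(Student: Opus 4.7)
The plan is to exhibit two explicit 3-forms in $\bigwedgem{3}V_{4,3}$ whose behavior separates the three varieties. Set
$$\omega_{1} := e_{-4}\wedge e_{-3}\wedge e_{-2} + e_{-1}\wedge e_{1}\wedge e_{2} \qquad \text{and} \qquad \omega_{2} := (e_{-4}\wedge e_{-3} + e_{-2}\wedge e_{-1} + e_{1}\wedge e_{2})\wedge e_{3}.$$
I claim $\omega_1$ witnesses both strict inclusions $\YY^{\min}_{4,3} \varsubsetneq \YY^{\max}_{4,3}$ and $\YY^{\min}_{4,3} \varsubsetneq (\bigwedgem{3}V_{4,3})_{(2,1), 2}$ by lying in $\YY^{\max}_{4,3} \cap (\bigwedgem{3}V_{4,3})_{(2,1), 2}$ but not in $\YY^{\min}_{4,3}$ (the non-strict inclusions are automatic from Proposition~\ref{prop:components} and Lemma~\ref{lemma:4,3min}(a)); meanwhile $\omega_2$ will witness $\YY^{\max}_{4,3} \neq (\bigwedgem{3}V_{4,3})_{(2,1), 2}$ by lying in $(\bigwedgem{3}V_{4,3})_{(2,1), 2}$ but not in $\YY^{\max}_{4,3}$.

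I would first verify the positive memberships for $\omega_1$. Writing $\omega_1 = (e_{-4}\wedge e_{-3})\wedge e_{-2} + (e_{-1}\wedge e_1)\wedge e_2$ displays $\omega_1$ as a sum of two elements of $\bigwedgem{2}V_{4,3}\wedge\bigwedgem{1}V_{4,3}$, placing it in $(\bigwedgem{3}V_{4,3})_{(2,1), 2}$. For $\omega_1 \in \YY^{\max}_{4,3}$, I would observe that for every functional $\phi$ the contraction $\iota_{\phi}\omega_1$ decomposes as the sum of a 2-form on the three-dimensional subspace $\langle e_{-4}, e_{-3}, e_{-2}\rangle$ and a 2-form on $\langle e_{-1}, e_1, e_2\rangle$; each summand lives in the exterior square of a three-dimensional space, hence is automatically decomposable, so the total rank is at most two and $\iota_{\phi}\omega_1 \in \Pf^{(2)}(V_{4,2})$.

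The main obstacle will be ruling out $\omega_1 \in \YY^{\min}_{4,3}$, since $\YY^{\min}_{4,3}$ is defined as a Zariski closure and limit points must also be excluded. The plan is to enlarge $\YY^{\min}_{4,3}$ to a larger closed subvariety that still excludes $\omega_1$. By Lemma~\ref{lemma:4,3min}(a) every element of $\YY^{\min}_{4,3}$ is of the form $\eta\wedge v$, so $\YY^{\min}_{4,3}$ sits inside the locus
$$Z := \{\omega \in \bigwedgem{3}V_{4,3} : \omega\wedge v = 0 \text{ for some nonzero } v \in V_{4,3}\},$$
which is Zariski-closed because it is the image of the closed incidence variety $\{(\omega, [v]) : \omega\wedge v = 0\} \subseteq \bigwedgem{3}V_{4,3}\times \mathbb{P}V_{4,3}$ under the proper projective projection. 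It therefore suffices to show that $\omega_1 \notin Z$, i.e.\ that the linear map $V_{4,3}\to \bigwedgem{4}V_{4,3}$, $v\mapsto \omega_1\wedge v$, is injective; expanding $\omega_1\wedge v$ for $v = \sum a_i e_i$ in the standard basis recovers each coordinate $a_i$ (up to sign) as the coefficient of a distinct basis 4-form of shape $e_{-4}\wedge e_{-3}\wedge e_{-2}\wedge e_i$ or $e_{-1}\wedge e_1\wedge e_2\wedge e_i$, so $\omega_1\wedge v = 0$ forces $v = 0$.

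Finally, for $\omega_2$: the factorization $\omega_2 = \alpha\wedge e_3$ with $\alpha := e_{-4}\wedge e_{-3} + e_{-2}\wedge e_{-1} + e_1\wedge e_2$ visibly places $\omega_2 \in (\bigwedgem{3}V_{4,3})_{(2,1), 1} \subseteq (\bigwedgem{3}V_{4,3})_{(2,1), 2}$. On the other hand, contracting with $e_3^*$ yields $\iota_{e_3^*}\omega_2 = \alpha$, a rank-three 2-form on the six-dimensional space $V_{4,2}$; as this exceeds the rank-two threshold defining $\Pf^{(2)}(V_{4,2})$ (cf.\ the proof of Lemma~\ref{lemma:4,3min}(a)), we conclude $\omega_2 \notin \YY^{\max}_{4,3}$, finishing all three parts of the proposition.
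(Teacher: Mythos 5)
Your proof is correct and uses exactly the same two witness trivectors as the paper ($e_{-4}\wedge e_{-3}\wedge e_{-2}+e_{-1}\wedge e_{1}\wedge e_{2}$ for $\YY^{\min}_{4,3}\varsubsetneq\YY^{\max}_{4,3}$ and $(e_{-4}\wedge e_{-3}+e_{-2}\wedge e_{-1}+e_{1}\wedge e_{2})\wedge e_{3}$ for $\YY^{\max}_{4,3}\neq(\bigwedgem{3}V_{4,3})_{(2,1),2}$), the only cosmetic difference being that you certify $X\setminus\YY^{\min}_{4,3}\neq\emptyset$ with the first vector rather than the second. You moreover supply the verifications the paper dismisses as ``direct computations'' --- in particular the contraction-rank criterion for membership in $\YY^{\max}_{4,3}$ and the closed incidence locus $\{\omega:\omega\wedge v=0\text{ for some }v\neq 0\}$, which correctly handles the Zariski closure in the definition of $\YY^{\min}_{4,3}$.
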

\begin{proof} For this proof we denote $X:= (\bigwedgem{3}V_{4,3})_{(2,1), 2}$. We use the description of $\YY_{4,3}^{\min}$ from Lemma \ref{lemma:4,3min}.

The trivector $(e_{-4}\wedge e_{-3} + e_{-2}\wedge e_{-1} + e_{1}\wedge e_{2})\wedge e_{3}$ belongs to $X\backslash \YY_{4,3}^{\max}$ and $X \backslash \YY_{4,3}^{\min}$. The inclusion $\YY_{4,3}^{\min} \subset X$ follows from the definitions. Finally, the trivector $e_{-4}\wedge e_{-3} \wedge e_{-2} + e_{-1}\wedge e_{1} \wedge e_{2}$ belongs to $\YY_{4,3}^{\max}$, but not to $\YY_{4,3}^{\min}$.
\end{proof}

\subsection{\texorpdfstring{$\HPf^{(4,2)}$}{TEXT}}

From Proposition \ref{prop:components} that we always have the following inclusions for the $(n,p)$-component:
$$\ZZZ_{n,p}^{\min}:= \overline{\bigcup_{i} i(\ZZZ_{n-1, p})} \subseteq \ZZZ_{n, p} \subseteq \bigcap_{i^{\dagger}} (i^{\dagger})^{-1}(\ZZZ_{n-1,p}) =: \ZZZ_{n,p}^{\max},$$
where the right intersection runs over the orbit of the map $i_{n-1,p}$ under the $\GL_{n-1, p}\times\GL_{n, p}$-action and the left union runs over the orbit of $i_{n-1,p}^{\dagger}$ under the same group.

Let us assume that $\ZZZ_{4,4} = \HPf^{(4,4)}(V_{4,4})= \{\hpf^{(4,2)}_{[4,4]} = 0\}$, where $$\hpf^{(4,2)}_{[8]}({\bf x}) = \sum_{I\sqcup J = [8], |I| = |J| = 4} \sgn(I,J) x_{I}x_{J} = x_{1234}x_{5678} - x_{1235}x_{4678} + \dots + x_{1678}x_{2345}.$$
This is the smallest ``non-Pfaffian'' example of a hyper-Pfaffian variety. Due to Proposition \ref{KeyMult}, the variety $\HPf^{(4,2)}(V)$ is the set of all elements $\omega$ in $\bigwedgem{4}V$ satisfying $\omega \wedge \omega = 0$. Therefore the variety $\ZZZ_{5,4}^{\max}$ coincides with $\HPf^{(4,2)}(V_{5,4}) = \{\omega \in \bigwedgem{4}V_{5,4}:\; \omega\wedge \omega = 0\}$.

\begin{proposition}\label{prop:VertNonEqua}
If $\ZZZ_{4,4} = \HPf^{(4,4)}(V_{4,4})$, then $\ZZZ_{5,4}^{\min} \varsubsetneq \ZZZ_{5,4}^{\max}$.
\end{proposition}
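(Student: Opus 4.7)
I will exhibit an explicit element $\omega\in\ZZZ_{5,4}^{\max}$ whose support (the smallest subspace $W\subseteq V_{5,4}$ with $\omega\in\bigwedgem{4}W$) equals all of $V_{5,4}$, and then argue that the locus of $4$-vectors with proper support is Zariski closed. This forces $\omega\notin\ZZZ_{5,4}^{\min}$. First I interpret the two sides. The discussion preceding the statement identifies $\ZZZ_{5,4}^{\max}=\{\omega\in\bigwedgem{4}V_{5,4}:\omega\wedge\omega=0\}$. For the minimal side, the orbit of $i_{4,4}$ under $\GL_{4,4}\times\GL_{5,4}$ realizes every linear inclusion $V_{4,4}\hookrightarrow V_{5,4}$, so $\ZZZ_{5,4}^{\min}$ is the Zariski closure of the set of $\omega$ that lie in $\bigwedgem{4}U$ for some $8$-dimensional subspace $U\subseteq V_{5,4}$ and satisfy $\omega\wedge\omega=0$; the wedge-square condition on $U$ matches the one on $V_{5,4}$, since $\bigwedgem{8}U\hookrightarrow\bigwedgem{8}V_{5,4}$ is injective.

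Next I record the closedness claim. The support of $\omega$ coincides with the image of the contraction map $c_{\omega}\colon V_{5,4}^{*}\to\bigwedgem{3}V_{5,4}$, $v^{*}\mapsto\iota_{v^{*}}\omega$; hence $\dim\mathrm{supp}(\omega)\leqslant 8$ if and only if $c_{\omega}$ is non-injective, which is cut out by the $9\times 9$ minors of a matrix whose entries are linear in the coordinates of $\omega$. Thus the locus $S:=\{\omega\in\bigwedgem{4}V_{5,4}:\dim\mathrm{supp}(\omega)\leqslant 8\}$ is Zariski closed and contains $\ZZZ_{5,4}^{\min}$. It now suffices to produce one $\omega\in\ZZZ_{5,4}^{\max}\setminus S$. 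I propose
\[
\omega:=e_{1}\wedge e_{2}\wedge e_{3}\wedge e_{4}+e_{1}\wedge e_{-1}\wedge e_{-2}\wedge e_{-3}+e_{2}\wedge e_{-1}\wedge e_{-4}\wedge e_{-5}.
\]
Any two of the three decomposable summands share a basis vector (respectively $e_{1}$, $e_{2}$, $e_{-1}$), so each cross term in $\omega\wedge\omega$ vanishes and $\omega\wedge\omega=0$. For the support claim I compute each $\iota_{e_{i}^{*}}\omega$ for $i\in[5,4]$ and observe that some basis $3$-vector isolates the coefficient $a_{i}$ of $v^{*}=\sum a_{i}e_{i}^{*}$: e.g.\ $e_{2}\wedge e_{3}\wedge e_{4}$ appears in $\iota_{v^{*}}\omega$ only with coefficient $a_{1}$, $e_{1}\wedge e_{3}\wedge e_{4}$ only with $a_{2}$, and analogously for the remaining seven indices. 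The system $\iota_{v^{*}}\omega=0$ is therefore triangular and forces $v^{*}=0$.

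The real obstacle is constructing the witness, not verifying its properties: one needs three decomposable $4$-vectors whose supports jointly exhaust all nine basis vectors of $V_{5,4}$ while pairwise sharing at least one index (so that all pairwise wedges vanish). The configuration above is essentially the minimal such arrangement; once it is written down, both verifications are short, so no further technical hurdle arises.
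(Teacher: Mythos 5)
Your proof is correct and takes essentially the same route as the paper: the paper's entire proof is the assertion that an explicit five-term $4$-vector lies in $\ZZZ_{5,4}^{\max}\setminus\ZZZ_{5,4}^{\min}$ ``via direct computations'', while you give a (simpler) three-term witness together with the separating argument the paper omits, namely that every element of $\ZZZ_{5,4}^{\min}$ has support of dimension at most $8$ and that this locus is Zariski closed, being cut out by the $9\times 9$ minors of $c_{\omega}$. The only slip is cosmetic: the support of $\omega$ is $(\ker c_{\omega})^{\perp}$ rather than the image of $c_{\omega}\colon V_{5,4}^{*}\to\bigwedgem{3}V_{5,4}$ (which lives in $\bigwedgem{3}V_{5,4}$); since $\dim\mathrm{supp}(\omega)=\rk(c_{\omega})$ in either formulation, the non-injectivity criterion you actually use is correct.
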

\begin{proof}
We can see that dimensions of the varieties are different, or instead just argue that $$e_{-5}\wedge e_{-4} \wedge e_{-3} \wedge e_{-2} + e_{-1}\wedge e_{1} \wedge e_{2} \wedge e_{3} + e_{-5}\wedge e_{-4} \wedge e_{-3} \wedge e_{-1} + e_{-2}\wedge e_{1} \wedge e_{2} \wedge e_{3} + e_{-5}\wedge e_{-2} \wedge e_{-1} \wedge e_{4} \in \ZZZ_{5,4}^{\max}\backslash \ZZZ_{5,4}^{\min}$$
via direct computations.
\end{proof}

\subsection{\texorpdfstring{$\SI$}{TEXT}-varieties which are not Pl\"ucker, and unbounded Pl\"ucker varieties} \label{sec:SINotPlu}
The following proposition explains which of the Pfaffian $\SI$-varieties are Pl\"ucker.
\begin{proposition}\label{prop:PfPlu}
\begin{enumerate}
    \item[$(1)$] The $\SI$-varieties $\PPf^{(1)}$ and $\PPf^{(2)}$ are Pl\"ucker, i.e., these varieties are preserved by the $\star$-symmetry.
    
    \item[$(2)$] The $\SI$-variety $\PPf^{(r)}$ for $r \geqslant 3$ are not Pl\"ucker. In other words, the $\star$-symmetry does not preserve $\PPf^{(r)}$ for $r \geqslant 3$.
\end{enumerate}
\end{proposition}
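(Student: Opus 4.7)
My plan is to dispose of (1) quickly and then produce an explicit counterexample for (2).

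For $\PPf^{(1)}$: the defining form $\pf^{(1)}_{A} = x_{A}$ for $|A|=2$ already collapses the starting component $\PPf^{(1)}_{0,2}$ to $\{0\}$, and the $\SI$-variety axioms (the $(0,2)$-maximality together with surjectivity of the $j^{\dagger}$- and $i^{\dagger}$-maps) then force $\PPf^{(1)}_{n,p} = \{0\}$ for all $(n,p)$; the zero variety is trivially Pl\"ucker. For $\PPf^{(2)}$: the paper identifies this with the Grassmann Pl\"ucker variety $\GGr$ (Section \ref{sec:Pfa}), and the classical Pl\"ucker duality $\star \Gr(p,V) = \Gr(n,V^{*})$ with $\dim V = n+p$ supplies axiom (3) of Definition \ref{def:PluVar}.

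For (2), the plan is to produce, for each $r \geq 3$, a vector $\omega$ in $\PPf^{(r)}_{n,p}$ whose Hodge dual is \emph{not} in $\PPf^{(r)}_{p,n}$. I will work in $V_{n,p}$ with $n = p = 2r-2$ (so $d := n+p = 4r-4$), relabel the basis of $V_{n,p}$ as $e_{1},\dots,e_{d}$, and distinguish the $2r$-dimensional subspace $V' := \langle e_{1},\dots,e_{2r}\rangle$ with complementary index set $J_{0} := \{2r+1,\dots,d\}$ of size $p-2$. For each $i = 1,\dots,r$ let $A_{i} := \{1,\dots,2r\}\setminus\{2i-1,2i\}$, a $(2r-2)$-subset of $V'$, and set
\[
\omega \;:=\; \sum_{i=1}^{r} e_{A_{i}} \;\in\; \bigwedgem{p}V' \;\subseteq\; \bigwedgem{p}V_{n,p}.
\]
The plan is then to verify both $\omega \in \PPf^{(r)}_{n,p}$ and $\star \omega \notin \PPf^{(r)}_{p,n}$.

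The first verification uses the geometric description from Theorem \ref{thm:HPfComponents}: $\omega$ lies in $\PPf^{(r)}_{n,p}$ iff for every $(p-2)$-subset $J \subseteq [d]$ the contracted bivector $\omega_{J} \in \bigwedgem{2}V_{[d]\setminus J}$ (with coordinates $\omega_{I\sqcup J}$, $|I|=2$) has Pfaffian rank $\leq r-1$. When $J \cap J_{0} \neq \emptyset$ the bivector $\omega_{J}$ vanishes because no monomial of $\omega$ involves a $J_{0}$-index; when $J \subseteq V'$ the set $V'\setminus J$ has exactly four elements, so a pigeonhole count on disjoint pairs shows that at most two of the pairs $\{2i-1,2i\}$ fit inside it, and only the corresponding $A_{i}$ contain $J$, so $\omega_{J}$ is a sum of at most two simple bivectors on disjoint index sets and thus of rank $\leq 2 \leq r-1$. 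The second verification uses the blockwise decomposition $V_{n,p} = V' \oplus \langle e_{j}\rangle_{j\in J_{0}}$ to obtain the Hodge-star identity $(\star \omega)_{J_{0}} = \pm\,\star_{V'}(\omega)$ inside $\bigwedgem{2}V'^{*}$, where $\star_{V'}$ is the Hodge dual on the $2r$-dimensional space $V'$; since $\star_{V'}e_{A_{i}} = \pm e_{\{2i-1,2i\}}$, one gets $(\star \omega)_{J_{0}} = \sum_{i=1}^{r} \pm e_{\{2i-1,2i\}}$, a bivector of rank exactly $r > r-1$, so $\star \omega$ escapes $\PPf^{(r)}_{p,n}$ and Pl\"ucker axiom (3) fails. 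The main obstacle is the membership check, but the pigeonhole count on disjoint pairs inside a $4$-element complement handles all $(p-2)$-subsets $J$ uniformly.
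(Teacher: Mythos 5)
Your proof is correct, but it takes a genuinely different route from the paper's. For part (1) you and the paper do essentially the same thing ($\PPf^{(1)}$ is the trivial variety, $\PPf^{(2)}=\GGr$ with classical Pl\"ucker duality), so there is nothing to add there. For part (2) the paper treats only $r=3$ and works at the asymmetric slots $(4,3)\to(3,4)$, describing both components via the ``minimal'' $\wedge$-decomposition description of Lemma \ref{lemma:4,3min}(b) and exhibiting a trivector whose Hodge dual has the wrong decomposition type. You instead work at the square slot $(n,p)=(2r-2,2r-2)$, where $\star$ is an endomorphism of the slot, and you verify membership and non-membership purely through the equational/contraction description of Theorem \ref{thm:HPfComponents} (every $(p-2)$-fold contraction has rank $\leqslant r-1$). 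This buys two things. First, your argument is uniform in $r\geqslant 3$, whereas the paper's general case is only claimed to be ``analogous.'' Second, and more substantively, your choice of slot sidesteps a delicate point in the paper's argument: by Theorem \ref{thm:HPfComponents} the target component $\PPf^{(3)}_{3,4}$ is the \emph{full} space $\bigwedgem{4}V_{3,4}$ (since $3<m(l-1)=4$), so the paper's ``if and only if'' characterization of that component via the min-description is in tension with its own Theorem \ref{thm:HPfComponents}; at your slot $(2r-2,2r-2)$ both source and target components are proper and are genuinely cut out by the stated Pfaffian equations, so no such ambiguity arises. The only caveats worth recording are cosmetic: the contraction $\omega_J$ carries signs $\sgn(I,J)$ relative to the raw coordinates $\omega_{I\sqcup J}$, but since your $\omega_J$'s are sums of decomposables on pairwise disjoint index pairs, these signs affect neither the rank bound $\leqslant 2$ in the membership check nor the nonvanishing $\pm\prod c_i$ of the degree-$r$ Pfaffian of $(\star\omega)_{J_0}$.
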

\begin{proof} The first statement follows from a direct computation and Lemma \ref{lemma:4,3min} (b).

We prove the second statement for $r = 3$ only; the general case is analogous. From Lemma \ref{lemma:4,3min} (b) we see that $\eta \in \bigwedgem{4}V_{4,3}$ lies in the $(4,3)$-component of $\PPf^{(3)}$ if and only if $\eta = (w_{1}\wedge w_{2} +w_{3}\wedge w_{4} +w_{5}\wedge w_{6})\wedge w_{7}$ for some $w_{i} \in V_{4,3}$. A direct calculation shows that $$\star \big((e_{-4}\wedge e_{-3} + e_{-2}\wedge e_{-1} + e_{1}\wedge e_{2})\wedge e_{3}\big) = e_{-2}\wedge e_{-1} \wedge e_{1}\wedge e_{2} + e_{-4}\wedge e_{-3} \wedge e_{1}\wedge e_{2} + e_{-4}\wedge e_{-3} \wedge e_{-2}\wedge e_{-1}$$
But from the same Lemma we see that $\omega \in \bigwedgem{4}V_{3,4}$ belongs to the $(3,4)$-component of $\PPf^{(3)}$ if and only if $\omega = (v_{1}\wedge v_{2} + v_{3}\wedge v_{4})\wedge v_{5} \wedge v_{6}$ for some $v_{i} \in V_{3,4}$. A contradiction.
\end{proof}

Generalizing the proposition we can see the following result. We recall that for $l=1$ and any $m$ the hyper-Pfaffian $\HHPf^{(m,1)} = \{0\}$ is trivial.

\begin{theorem}
A nontrivial hyper-Pfaffian $\SI$-variety $\HHPf^{(m,l)}$ is Pl\"ucker if and only if $l=2$. Moreover, the only nontrivial bounded hyper-Pfaffian Pl\"ucker variety is $\HHPf^{(2,2)} = \PPf^{(2)} = \GGr$.
\end{theorem}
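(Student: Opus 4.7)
The plan is to settle the Pl\"ucker dichotomy ($l = 2$ versus $l \geqslant 3$) first and then extract the bounded classification as a consequence. For the Pl\"ucker direction I will identify $\HHPf^{(m,2)}$ with its Hodge dual $\HHPf^{(m,2),\star}$ as $\SI$-varieties --- this is exactly axiom~(3) of Definition~\ref{def:PluVar}; for the non-Pl\"ucker direction I will exhibit, for any $l \geqslant 3$, a Hodge-paired pair of components of wildly different sizes.

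\emph{Step 1: $\HHPf^{(m,2)}$ is Pl\"ucker.} I first observe that both $\HHPf^{(m,2)}$ and $\HHPf^{(m,2),\star}$ are $(m,m)$-maximal $\SI$-varieties: for the former, $m(l-1) = m$ when $l = 2$; for the latter, $r(s-1) = m$ with $(r,s) = (m,2)$. Maximality determines all components $\succ (m,m)$, while the surjectivity of $i^{\dagger}$ and $j^{\dagger}$ from axiom~(ii) of Definition~\ref{def:SIVar} determines those $\preceq (m,m)$, so it suffices to compare the $(m,m)$-components inside $\bigwedgem{m}V_{m,m}$. By Corollary~\ref{GeoHPf}, $\HHPf^{(m,2)}_{m,m} = \{\omega : \omega\wedge\omega = 0\}$, and since $\bigwedgem{2m}V_{m,m}$ is one-dimensional, Proposition~\ref{KeyMult} reduces this to the single equation $\hpf^{(m,2)}_{[m,m]}(\omega) = 0$. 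A direct coordinate computation shows that the Hodge star acts by $[\star\omega]_I = \sigma(I)\,a_{I^c}$ with $\sigma(I)\sigma(I^c) = \sgn(I, I^c)\sgn(I^c, I) = (-1)^{m^2}$; substituting into the Pfaffian sum and relabelling $I \leftrightarrow J = I^c$ yields
\[
\hpf^{(m,2)}_{[m,m]}(\star\omega) \;=\; (-1)^{m^2}\,\hpf^{(m,2)}_{[m,m]}(\omega).
\]
Since $m$ is even, the sign is $+1$, the hypersurface is $\star$-invariant, and the two $(m,m)$-components coincide. This gives $\HHPf^{(m,2)} = \HHPf^{(m,2),\star}$ as $\SI$-varieties, i.e.\ $\star\,\HHPf^{(m,2)}_{p,n} \subseteq \HHPf^{(m,2)}_{n,p}$ for every $(n,p)$, which is Pl\"ucker axiom~(3). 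Functoriality of the resulting $\XX_p$ in arbitrary linear maps is immediate from the intrinsic condition $\omega \wedge \omega = 0$ being preserved by every $\bigwedgem{m}\phi$, together with the $\SI$-variety axioms handling inclusions and coordinate projections.

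\emph{Step 2: $\HHPf^{(m,l)}$ is not Pl\"ucker for $l \geqslant 3$.} I will examine the Hodge-paired components $(m, m(l-1))$ and $(m(l-1), m)$. By Theorem~\ref{thm:HPfComponents}, since $n = m < m(l-1)$ one has $\HHPf^{(m,l)}_{m,m} = \bigwedgem{m}V_{m,m}$; consequently $\HHPf^{(m,l)}_{m, m(l-1)}$, defined as an intersection of pullbacks of $\HHPf^{(m,l)}_{m,m}$, is the full ambient space $\bigwedgem{m(l-1)}V_{m, m(l-1)}$. If $\HHPf^{(m,l)}$ were Pl\"ucker, axiom~(3) would force
\[
\star\,\HHPf^{(m,l)}_{m, m(l-1)} \;\subseteq\; \HHPf^{(m,l)}_{m(l-1), m} \;=\; \HPf^{(m,l)}(V_{m(l-1), m}).
\]
The left-hand side is all of $\bigwedgem{m}V_{m(l-1), m}$ (because $\star$ is a linear isomorphism), while the right-hand side is a proper subvariety --- cut out by the nonzero hyper-Pfaffian form $\hpf^{(m,l)}_{[m(l-1), m]}$ --- which is the desired contradiction.

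\emph{Step 3: the bounded case.} Combining Steps~1--2, every nontrivial Pl\"ucker hyper-Pfaffian must be of the form $\HHPf^{(m,2)}$ for some even $m$. Its Pl\"ucker functor satisfies $\XX_2(V_{n,2}) = \HHPf^{(m,2)}_{n,2}$, which by the $p < m$ clause of Theorem~\ref{thm:HPfComponents} equals $\bigwedgem{2}V_{n,2}$ for all $n$ whenever $m \geqslant 4$, so $\HHPf^{(m,2)}$ is unbounded in that range. Only $m = 2$ survives, giving $\HHPf^{(2,2)} = \PPf^{(2)} = \GGr$, which is bounded since $\Gr(2,4)$ is a proper subvariety of $\bigwedgem{2}\KK^4$.

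The main obstacle will be the sign tracking in Step~1: the invariance of the top hyper-Pfaffian under $\star$ depends on the parity identity $(-1)^{m^2} = +1$ for even $m$, precisely the parity already built into the definition of the hyper-Pfaffian form; any misconvention in the Hodge star would flip the sign and spoil the argument. The remaining two steps unfold directly from Theorem~\ref{thm:HPfComponents}.
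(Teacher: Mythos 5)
Your argument is correct and rests on the same underlying observation as the paper's proof, namely that the maximality coordinate $(m(l-1),m)$ of $\HHPf^{(m,l)}$ is asymmetric under Hodge duality unless $l=2$; the paper simply asserts that this asymmetry "is equivalent to the $\star$-symmetry for maximal $\SI$-varieties," whereas you make both directions concrete. Your Step 2 is a genuine improvement in explicitness: exhibiting the Hodge-paired components $(m,m(l-1))$ (full ambient space, by the $n<m(l-1)$ clause of Theorem \ref{thm:HPfComponents}) and $(m(l-1),m)$ (a proper hypersurface) and noting that axiom (3) cannot map an isomorphic image of the former into the latter is cleaner and more general than the paper's element-by-element computation in Proposition \ref{prop:PfPlu}, which it subsumes. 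Your sign computation in Step 1 is also correct ($\sgn(I,I^c)\sgn(I^c,I)=(-1)^{m^2}=+1$ for even $m$), and Step 3 matches the paper exactly.

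One step deserves more care. In Step 1 you pass from "$\HHPf^{(m,2)}$ and $\HHPf^{(m,2),\star}$ are equal as $(m,m)$-maximal $\SI$-varieties" to "$\star\,\HHPf^{(m,2)}_{p,n}\subseteq\HHPf^{(m,2)}_{n,p}$ for all $(n,p)$." Since the paper defines $\HHPf^{(r,s),\star}$ only by its $(r,r(s-1))$-component together with maximality, and not componentwise as $\star$ applied to each component of $\HHPf^{(r,s)}$, this inference needs the additional fact that the Hodge star carries a maximal $\SI$-variety to a maximal $\SI$-variety componentwise (which follows from $i^{\dagger}_{n,p}=(j_{p,n})^{*}$, i.e., $\star$ interchanges the roles of the $i$- and $j$-maps, but should be said). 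Likewise, Plücker axioms (1)--(2) require a functor $\XX_p$ on all of $\Vec$, and your remark about $\omega\wedge\omega=0$ only covers $p=m$; for $p\neq m$ one must check functoriality of the equations $\hpf^{(m,2)}_{A|J}$ under arbitrary linear maps. Neither point is fatal --- the paper elides both as well --- but they are the places where your proof, as written, is incomplete rather than merely terse.
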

\begin{proof}
We recall that the $\SI$-variety $\HHPf^{(m,l)}$ is defined as the only $(m(l-1), m)$-maximal variety with $(m(l-1), m)$-component equal to the corresponding hyper-Pfaffian (Theorem \ref{thm:HPfComponents}). Then the statement follows: if the hyper-Pfaffian $\SI$-variety is nontrivial ($l \neq 1$), then only for $l=2$ we have the equality $m(l-1) = m$ which is equivalent to the $\star$-symmetry for $(n,p)$-maximal $\SI$-varieties. 

From Theorem \ref{thm:HPfComponents} we see that for $m \geqslant 4$ the varieties $\HHPf^{(m,2)}_{n, 2}$ coincide with $\bigwedgem{2}V_{n,2}$. Therefore the corresponding varieties are unbounded.
\end{proof}

\end{document}